\def\NZQ{\Bbb}               
\def\ZZ{{\NZQ Z}}
\def\B'c{{\mathcal{B'}}}
\def\U'c{{\mathcal{U'}}}
\def\opn#1#2{\def#1{\operatorname{#2}}} 
\opn\chara{char}
\opn\length{\ell}
\opn\projdim{proj\,dim}
\opn\injdim{inj\,dim}
\opn\ini{in}
\opn\rank{rank}
\opn\depth{depth}
\opn\sdepth{sdepth}
\opn\height{ht}
\opn\embdim{emb\,dim}
\opn\codim{codim}
\opn\Tr{Tr}
\opn\bigrank{big\,rank}
\opn\superheight{superheight}\opn\lcm{lcm}
\opn\trdeg{tr\,deg}%
\opn\reg{reg}
\opn\lreg{lreg}
\opn\set{set}
\opn\supp{Supp}
\opn\shad{Shad}
\opn\div{div}
\opn\Div{Div}
\opn\cl{cl}
\opn\Cl{Cl}
\opn\Spec{Spec}
\opn\Supp{Supp}
\opn\supp{supp}
\opn\Sing{Sing}
\opn\Ass{Ass}
\opn\Min{Min}
\opn\size{size}
\opn\bigsize{bigsize}
\opn\lex{lex}
\opn\d{d}
\opn\diam{diam}
\opn\Ann{Ann}
\opn\Rad{Rad}
\opn\Soc{Soc}
\opn\Ker{Ker}
\opn\Coker{Coker}
\opn\Im{Im}
\opn\Hom{Hom}
\opn\Tor{Tor}
\opn\Ext{Ext}
\opn\End{End}
\opn\Aut{Aut}
\opn\id{id}
\opn\nat{nat}
\opn\GL{GL}
\opn\SL{SL}
\opn\mod{mod}
\opn\ord{ord}
\opn\aff{aff}
\opn\con{conv}
\opn\relint{relint}
\opn\st{st}
\opn\lk{lk}
\opn\cn{cn}
\opn\core{core}
\opn\vol{vol}
\opn\gr{gr}
\def\pot#1#2{#1[\kern-0.28ex[#2]\kern-0.28ex]}
\opn\dirlim{\underrightarrow{\lim}}
\opn\invlim{\underleftarrow{\lim}}
\def\pnt{{\raise0.5mm\hbox{\large\bf.}}}
\def\Implies{\ifmmode\Longrightarrow \else
     \unskip${}\Longrightarrow{}$\ignorespaces\fi}
\def\implies{\ifmmode\Rightarrow \else
     \unskip${}\Rightarrow{}$\ignorespaces\fi}
\def\iff{\ifmmode\Longleftrightarrow \else
     \unskip${}\Longleftrightarrow{}$\ignorespaces\fi}
\newtheorem{Theorem}{Theorem}[section]
\newtheorem{Lemma}[Theorem]{Lemma}
\newtheorem{Corollary}[Theorem]{Corollary}
\newtheorem{Proposition}[Theorem]{Proposition}
\newtheorem{Remark}[Theorem]{Remark}
\newtheorem{Example}[Theorem]{Example}
\newtheorem{Definition}[Theorem]{Definition}
\newtheorem{Conjecture}[Theorem]{Conjecture}
\newtheorem{Question}[Theorem]{Question}
\let\epsilon=\varepsilon
\let\phi=\varphi
\let\kappa=\varkappa
\numberwithin{equation}{section}
\title{Depth and Stanley depth of the edge ideals of the strong product of some graphs}
\author[Zahid Iqbal]{Zahid Iqbal}
\address{Zahid Iqbal, Muhammad Ishaq, School of Natural Sciences, National University of Sciences and Technology Islamabad, Sector H-12, Islamabad Pakistan.}
\email{786zahidwarraich@gmail.com, ishaq$\_\,$maths@yahoo.com}
\author[Muhammad Ishaq]{Muhammad Ishaq}
\author[Muhammad Ahsan Binyamin]{Muhammad Ahsan Binyamin}
\address{Muhammad Ahsan Binyamin, Department of Mathematics, Government College University Faisalabad Pakistan.}
\email{ahsanbanyamin@gmail.com}
\begin{document}
\maketitle
\begin{abstract}
In this paper we study depth and Stanley depth of the edge ideals and quotient rings of the edge ideals, associated to classes of graphs obtained by taking the strong product of two graphs. We consider the cases when either both graphs are arbitrary paths or one is an arbitrary path and the other is an arbitrary cycle. We give exact formulae for values of depth and Stanley depth for some subclasses. We also give some sharp upper bounds for depth and Stanley depth in the general cases.\\\\
\textbf{Keywords:} Depth, Stanley depth, Stanley decomposition, monomial ideal, edge ideal, strong product of graphs.\\
\textbf{2010 Mathematics Subject Classification:} Primary: 13C15; Secondary: 13F20; 05C38; 05E99.
\end{abstract}
\section{Introduction}
\noindent Let $S := K[x_{1}, \ldots, x_{n}]$ be the polynomial ring over field $K$. Let $M$ be a finitely generated
$\mathbb{Z}^{n}$-graded $S$-module. A Stanley decomposition of $M$ is a
presentation of $K$-vector space $M$ as a finite direct sum
$\mathcal{D}: M = \bigoplus_{i = 1}^{r}w_{i}K[A_{i}],$ where $w_{i}\in M$, $A_{i}\in \{x_{1}, \ldots, x_{n}\}$ such that
$w_{i}K[A_{i}]$ denotes the $K$-subspace of $M$, which is generated
by all elements $w_iu$, where $u$ is a monomial in $K[A_{i}]$. The
$\mathbb{Z}^{n}$-graded $K$-subspace $w_{i}K[A_{i}]\subset M$ is
called a Stanley space of dimension $|A_{i}|$, if $w_{i}K[A_{i}]$ is
a free $K[A_{i}]$-module, where $|A_{i}|$ denotes the number of
indeterminates of $A_{i}$. Define
$\sdepth(\mathcal{D}) = \min\{|A_{i}|: i = 1, \ldots, r\},$ and
$\sdepth(M) = \max\{\sdepth(\mathcal{D}): \text{$\mathcal{D}$~is~a~Stanley~decomposition~of
~$M$}\}.$ The number $\sdepth(\mathcal{D})$ is called the Stanley
depth of decomposition $\mathcal{D}$ and $\sdepth(M)$ is called the Stanley depth
of $M$. Stanley Conjectured in \cite{RP} that $\sdepth(M)\geq \depth(M)$ for any $\ZZ^n$-graded $S$-module $M$. This conjecture was disproved by Duval et al. \cite{DG}.

Let $I\subset J\subset S$ be monomial ideals, Herzog et al. \cite{HVZ} showed that the invariant Stanley depth of $J/I$ is combinatorial in nature. The strange thing about Stanley depth is that it shares some properties and bounds with homological invariant $\depth$ see (\cite{HVZ,MI1,AR1,PFY}). Until now mathematicians are not too much familiar with Stanley depth as it is hard to compute, for computation and some known results we refer the readers to (\cite{BH,MI,MI2,KS,PFY}). Let $P_n$ and $C_n$ represent path and cycle respectively on $n$ vertices and $\boxtimes$ represent the strong product of two graphs. The aim of this paper is to study depth and Stanley depth of the edge ideals and quotient ring of the edge ideals associated to classes of graphs $\mathcal{H}:=\{P_n\boxtimes P_m:n,m\geq 1\}$ and $\mathcal{K}:=\{C_n\boxtimes P_m:n\geq3, m\geq1\}$. In section 3 we compute depth and Stanley depth of quotient ring of edge ideals associated to some subclasses of $\mathcal{H}$ and $\mathcal{K}$.

For the monomial ideal $I\subset S$ it is well known that $\depth(I)=\depth(S/I)$+1, this means that once you know about $\depth(S/I)$ then you also know about $\depth(I)$ and vice versa. Where as for Stanley depth this is not the case, we have examples where $\sdepth(I)>\sdepth(S/I)$ but till now no example is known where $\sdepth(I)<\sdepth(S/I)$. Looking at the behavior of $\sdepth(S/I)$ and $\sdepth(I)$ it seems that the latter inequality is false. In a recent survey on Stanley depth, Herzog conjectured the following inequality.
\begin{Conjecture}\cite{HS}\label{C1}
Let $I\subset S$ be a monomial ideal then $\sdepth(I)\geq \sdepth(S/I).$
\end{Conjecture}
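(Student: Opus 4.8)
The final statement is Herzog's conjecture, which at present remains open; I describe the line of attack I would pursue and where I expect it to stall. A natural first reduction is to the squarefree case via polarization: granting that Stanley depth respects polarization in the sense that the difference $\sdepth(I)-\sdepth(S/I)$ is preserved when $I$ is replaced by its polarization in the enlarged polynomial ring (itself a delicate point), it suffices to prove the inequality for $I=I_{\Delta}$ the Stanley--Reisner ideal of a simplicial complex $\Delta$ on $[n]$. Hence I may assume $I$ is squarefree.

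Next I would translate both sides into the interval-partition language of Herzog, Vladoiu and Zheng \cite{HVZ}, which in the squarefree case lives inside the Boolean lattice $2^{[n]}$. The faces $D=\{A\subseteq[n]:x_A\notin I\}$ form an order ideal and the non-faces $F=\{A:x_A\in I\}$ form the complementary filter, with $F\sqcup D=2^{[n]}$. Writing $\mathrm{val}(P)$ for the maximum over partitions of a subposet $P$ into intervals $[C,D']$ of the quantity $\min|D'|$, the combinatorial characterization gives $\sdepth(S/I)=\mathrm{val}(D)$ and $\sdepth(I)=\mathrm{val}(F)$. The conjecture thus becomes the purely combinatorial assertion that, whenever $2^{[n]}$ is cut into an order ideal and its complementary filter, $\mathrm{val}(F)\geq\mathrm{val}(D)$.

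The crux, and the step I expect to be the main obstacle, is to manufacture from an optimal partition of $D$ realizing $\mathrm{val}(D)=:d$ an interval partition of $F$ all of whose intervals have a top of size at least $d$. The obvious symmetry, complementation $A\mapsto[n]\setminus A$, is an anti-automorphism carrying $D$ to a filter and each interval $[C,D']$ to $[\,\overline{D'},\overline{C}\,]$; but it exchanges tops and bottoms, sending a top of size $k$ to a bottom of size $n-k$, so it controls the wrong endpoint and does not preserve $\mathrm{val}$. A usable correspondence must instead push the mass of each interval upward inside $F$, and I do not expect a canonical such map to exist; more plausibly one would need a global matching or flow argument on the Hasse diagram directly comparing the two complementary subposets.

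As a fallback I would attempt induction on $n$, peeling off the variable $x_n$ and relating $\sdepth(I)$ to the Stanley depths of $I:x_n$ and $(I,x_n)$, while using the exact identity $\depth(I)=\depth(S/I)+1$ to pin down base cases. The difficulty here is that Stanley depth is not additive along the relevant short exact sequences, so the inductive hypothesis does not propagate cleanly, and I expect this route to succeed only once the combinatorial transfer described above is understood.
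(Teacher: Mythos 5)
You have correctly identified the essential point: this statement is not a theorem of the paper but Herzog's conjecture, quoted from \cite{HS}, and it remains open. The paper offers no proof of it in general, so there is no ``paper's own proof'' for your sketch to be measured against; declining to claim a proof is the right call. What the paper actually does is confirm the conjecture for the specific families it studies: in Section 4 it proves $\sdepth(I(\mathcal{P}_{n,m}))>\sdepth(S_{n,m}/I(\mathcal{P}_{n,m}))$ for $1\le m\le 3$ (Theorems \ref{th3} and \ref{th11}) by explicit vector-space splittings of the ideal of the form $I=(I\cap S')\oplus x(I:x)S$, induction on $n$, and the tensor-type bound of Lemma \ref{lecon}; and it proves $\sdepth(I(\mathcal{C}_{n,m}))\ge\sdepth(S_{n,m}/I(\mathcal{C}_{n,m}))$ for $1\le m\le 3$ by applying Lemma \ref{le1} to the short exact sequence $0\to I(\mathcal{P}_{n,m})\to I(\mathcal{C}_{n,m})\to I(\mathcal{C}_{n,m})/I(\mathcal{P}_{n,m})\to 0$, with the needed lower bounds on the quotient supplied by Propositions \ref{Pro6} and \ref{Pro7}.

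Your route is aimed at the general statement rather than at these families, and as far as it goes it is sound: the reduction to squarefree ideals via polarization, which you rightly flag as delicate, is in fact legitimate (Stanley depth is known to behave compatibly with polarization on both $I$ and $S/I$, by work of Ichim, Katth\"an and Moyano-Fern\'andez); the reformulation in terms of interval partitions of the order ideal of faces versus the complementary filter of non-faces is exactly the \cite{HVZ} picture; and your observation that complementation $A\mapsto[n]\setminus A$ exchanges tops and bottoms, hence controls the wrong endpoint, is precisely why the conjecture does not follow from the obvious symmetry. Where your sketch stalls --- manufacturing from an optimal partition of the order ideal a partition of the filter with large tops --- is the genuine open difficulty. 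In short: your approach buys the correct combinatorial formulation of the full conjecture but no theorem; the paper's approach buys unconditional theorems, but only for narrow classes of edge ideals, by arguments (colon ideals, explicit decompositions, induction) that make no attempt at the general case.
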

In section 4 of this paper we confirm the above conjecture for the edge ideals associated to some subclasses of $\mathcal{H}$ and $\mathcal{K}$. For a recent work on the above conjecture we refer the reader to \cite{KY}. In section 5 we give sharp upper bounds for depth and Stanley depth of quotient ring of the edge ideals associated to $\mathcal{H}$ and $\mathcal{K}$. In the same section we also propose some open questions. We gratefully acknowledge the use of the computer algebra system CoCoA (\cite{COC}) for our experiments.

\section{Definitions and notation}
In this section we review some standard terminologies and notations from graph theory and algebra. For more details one may consult \cite{HIK,RHV}.
Let $G:=(V(G),E(G))$ be a graph with vertex set $V(G):=\{x_1,x_2,\dots,x_n\}$ and edge set $E(G)$. The edge ideal $I(G)$ associated to $G$ is the square free monomial ideal of $S$, that is $I(G)=(x_{i}x_{j} : \{x_i, x_j\}\in E(G)).$
A graph $G$ on $n\geq 2$ vertices is called a path on $n$ vertices if $E(G)=\{\{x_i,x_{i+1}\}:i=1,2\dots,n-1\}$. We denote a path on $n$ vertices by $P_n$. A graph $G$ on $n\geq 3$ vertices is called a cycle if $E(G)=\{\{x_i,x_{i+1}\}:i=1,2,\dots,n-1\}\cup\{\{x_1,x_n\}\}.$ A cycle on $n$ vertices is denoted by $C_n$. For vertices $x_i$ and $x_j$ of a graph $G$, the length of a shortest path from $x_i$ to $x_j$ is called the distance between $x_i$ and $x_j$ denoted by $\d_G(x_i, x_j)$. If no such path exists between $x_i$ and $x_j$, then $d_{G}(x_i,x_j)=\infty$. The diameter of a connected graph $G$ is $\diam(G):=\max\{\d_G(x_i,x_j):x_i,x_j\in V(G)\}$.
\begin{Definition}[\cite{HIK}]
The strong product $G_1 \boxtimes G_2$ of graphs $G_1$ and $G_2$ is a graph, with $V(G_1 \boxtimes G_2)=V(G_1)\times V (G_2)$ $($the cartesian product of sets$)$, and for $(v_1, u_1),(v_2, u_2)\in V(G_1\boxtimes G_2)$, $(v_1, u_1)(v_2, u_2)\in E(G_1 \boxtimes G_2)$, whenever
\begin{itemize}
  \item $v_1v_2 \in E(G_1)$ and $u_1 = u_2$ or
  \item $v_1 = v_2$ and $u_1u_2\in E(G_2)$ or
  \item $v_1v_2 \in E(G_1)$ and $u_1u_2\in E(G_2)$.
\end{itemize}
\end{Definition}
Let $P_1$ denotes the null graph on one vertex that is $V(P_1):=\{x_1\}$ and $E(P_1):=\emptyset$. Let $\mathcal{P}_{n,m}:=P_{n}\boxtimes P_{m}\cong P_{m}\boxtimes P_{n}$, if $n=m=1$, then $\mathcal{P}_{1,1}\cong P_1$, this trivial case is excluded. For $n\geq 3$ and $m\geq 1$, $\mathcal{C}_{n,m}:=C_{n}\boxtimes P_{m}\cong P_m\boxtimes C_n$.
\begin{Remark}
{\em
$|V(\mathcal{P}_{n,m})|=nm$, $|E(\mathcal{P}_{n,m})|=4(n-1)(m-1)+(n-1)+(m-1)$,
$|V(\mathcal{C}_{n,m})|=nm$ and $|E(\mathcal{C}_{n,m})|=|E(\mathcal{P}_{n,m})|+3(m-1)+1$.
}
\end{Remark}
Since both graphs $\mathcal{P}_{n,m}$ and $\mathcal{C}_{n,m}$ are on $nm$ vertices, for the sake of convenience we label the vertices of $\mathcal{P}_{n,m}$ and $\mathcal{C}_{n,m}$ by using $m$ sets of variables $\{x_{1j}, x_{2j}, \dots , x_{nj}\}$ where $1\leq j\leq m.$ We set $S_{n,m}:=K[\cup_{j=1}^m\{x_{1j}, x_{2j},\dots,x_{nj}\}]$. For examples of $\mathcal{P}_{n,m}$ and $\mathcal{C}_{n,m}$ see Fig \ref{A}.
\begin{Remark}
\begin{figure}[h!]
\centering
 \includegraphics[width=8cm]{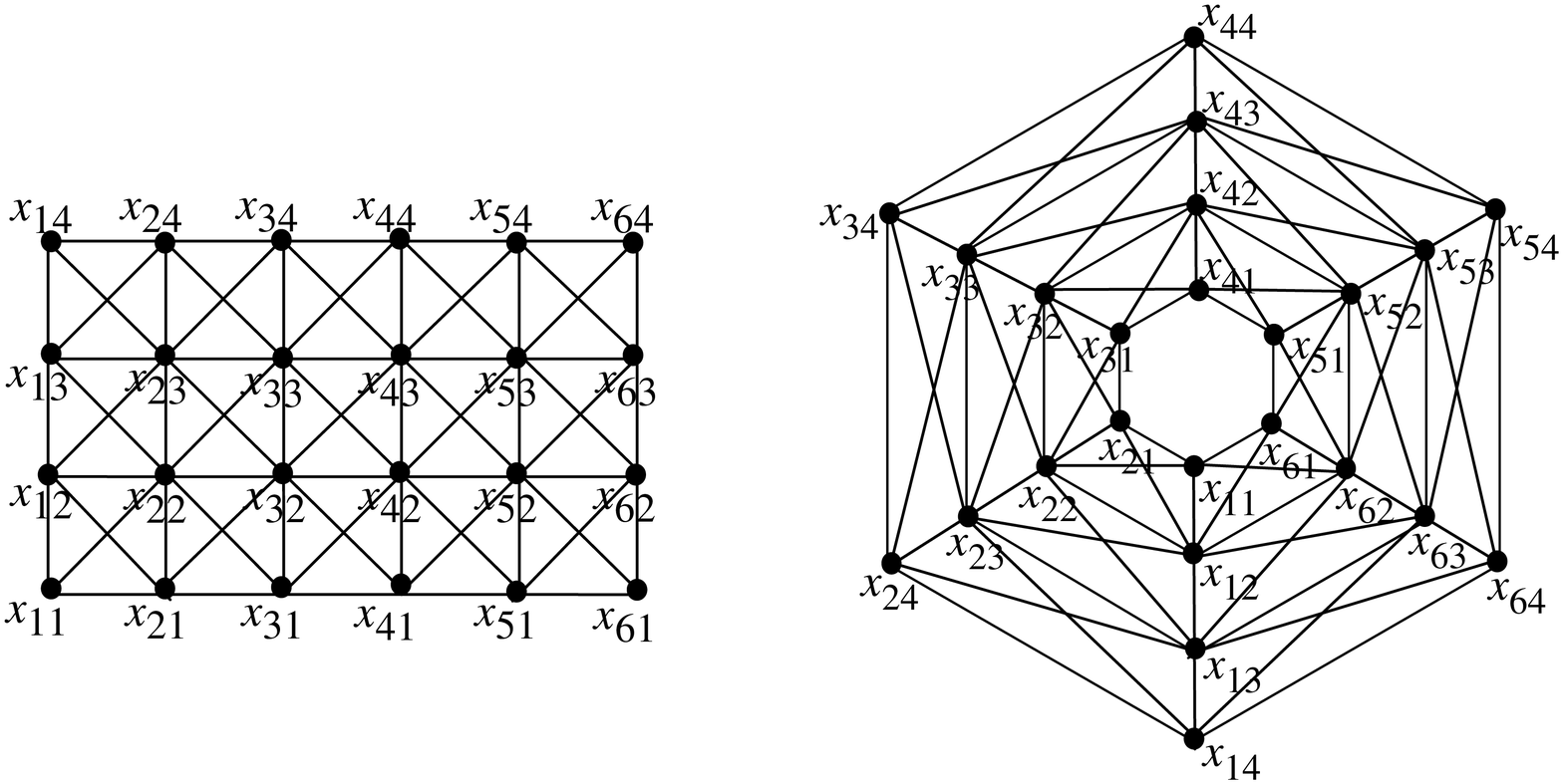}\\
 \caption {From left to right; $\mathcal{P}_{6,4}$ and $\mathcal{C}_{6,4}$.}\label{A}
\end{figure}

{\em Let $\mathcal{G}(I)$ denotes the unique minimal set of monomial generators of the monomial ideal $I$.
\begin{enumerate}
\item  For positive integers $m,n$ such that $m$ and $n$ are not equal to $1$ simultaneously, the minimal set of monomial generators of the edge ideal of $\mathcal{P}_{n,m}$ is given as:
{\small\begin{multline*}
$$\mathcal{G}(I(\mathcal{P}_{n,m}))=\cup^{n-1}_{i=1}\big\{\cup^{m-1}_{j=1}\{x_{ij}x_{i(j+1)},x_{ij}x_{(i+1)(j+1)},x_{ij}x_{(i+1)j},x_{(i+1)j}x_{i(j+1)}
,x_{nj}x_{n(j+1)}\},x_{im}x_{(i+1)m}\big\}.$$
\end{multline*}}
\item For $n\geq 3$, $m\geq 1$, the minimal set of monomial generators for $I(\mathcal{C}_{n,m})$ is:
{\small\begin{eqnarray*}
\mathcal{G}(I(\mathcal{C}_{n,m}))=\mathcal{G}(I(\mathcal{P}_{n,m}))\cup\big\{\cup^{m-1}_{j=1}\{x_{1j}x_{n(j+1)},x_{1j}x_{nj},x_{1(j+1)}x_{nj}\},
x_{1m}x_{nm}\big\}.
\end{eqnarray*}}
\item $\mathcal{P}_{n,1}\cong{P}_n$ and $\mathcal{C}_{n,1}\cong{C}_n$.
\item For $n,m\geq 1$, $\mathcal{P}_{n,m}\cong\mathcal{P}_{m,n}$, so without loss of generality the strong product of two paths can be represented as $\mathcal{P}_{n,m}$ with $m\leq n$. Thus in some proofs by induction on $n$, whenever we are reduced to the case where we have $\mathcal{P}_{n',m}$ with $n'<m$, in that case after a suitable relabeling of vertices we have $\mathcal{P}_{n',m}\cong \mathcal{P}_{m,n'}$. Therefore, we can simply replace $I(\mathcal{P}_{n',m})$ by $I(\mathcal{P}_{m,n'})$ and $S_{n',m}/I(\mathcal{P}_{n',m})$ by $S_{m,n'}/I(\mathcal{P}_{m,n'})$.
\end{enumerate}}
\end{Remark}
Now we recall some known results that are heavily used in this paper.
\begin{Lemma}\label{le01}
(Depth Lemma) If $0\rightarrow U \rightarrow M \rightarrow N \rightarrow 0$ is a short exact sequence of modules over a
local ring $S$, or a Noetherian graded ring with $S_0$ local, then
\begin{enumerate}
\item $\depth (M) \geq \min\{\depth(N), \depth(U)\}$.
\item $\depth (U) \geq \min\{\depth(M), \depth(N) + 1\}$.
\item $\depth (N) \geq \min\{\depth(U)- 1, \depth(M)\}$.
\end{enumerate}
\end{Lemma}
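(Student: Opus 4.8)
The statement is the standard Depth Lemma, and the plan is to derive all three inequalities at once from the long exact sequence in local cohomology together with the cohomological description of depth. Recall that over such a ring, with (homogeneous) maximal ideal $\mm$ and residue field $K=S/\mm$, every finitely generated module $L$ satisfies $\depth(L)=\min\{i:H^i_{\mm}(L)\neq 0\}$; equivalently one may take $\depth(L)=\min\{i:\Ext^i_S(K,L)\neq 0\}$, and the argument is identical with $\Ext$ in place of local cohomology. Applying the $\mm$-torsion functor to the short exact sequence $0\to U\to M\to N\to 0$ yields the long exact sequence
\[
\cdots \To H^{i-1}_{\mm}(N)\To H^i_{\mm}(U)\To H^i_{\mm}(M)\To H^i_{\mm}(N)\To H^{i+1}_{\mm}(U)\To\cdots,
\]
and each inequality will follow by reading off a three-term stretch of this sequence.

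Writing $a=\depth(U)$, $b=\depth(M)$, $c=\depth(N)$, these depths are precisely the least indices in which the respective local cohomologies become nonzero. For part (1) I would use the piece $H^i_{\mm}(U)\to H^i_{\mm}(M)\to H^i_{\mm}(N)$: for every $i<\min\{a,c\}$ both outer terms vanish, forcing $H^i_{\mm}(M)=0$, whence $b\geq\min\{a,c\}$. For part (2) I would use $H^{i-1}_{\mm}(N)\to H^i_{\mm}(U)\to H^i_{\mm}(M)$: when $i<\min\{b,c+1\}$ one has $i-1<c$ and $i<b$, so both neighbours of $H^i_{\mm}(U)$ vanish, giving $a\geq\min\{b,c+1\}$. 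For part (3) I would use $H^i_{\mm}(M)\to H^i_{\mm}(N)\to H^{i+1}_{\mm}(U)$: when $i<\min\{a-1,b\}$ we get $i<b$ and $i+1<a$, so $H^i_{\mm}(N)=0$, yielding $c\geq\min\{a-1,b\}$.

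There is no substantive obstacle here; the content of the lemma is entirely packaged in the exactness of the cohomology sequence and the definition of depth. The only point demanding care is the index bookkeeping in parts (2) and (3), where the degree shift in the connecting maps is exactly what produces the $+1$ and $-1$ in the bounds, so I would track carefully which term sits one degree higher. If one prefers to avoid local cohomology, the same three inequalities admit an elementary proof by induction on $\depth$ using a common maximal regular element and passing to the quotients modulo that element, but the cohomological argument is shorter and makes the role of each bound transparent.
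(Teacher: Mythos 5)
Your proof is correct. Note that the paper itself does not prove this lemma at all: it is stated in Section~2 among the ``known results'' that are recalled for later use, so there is no internal proof to compare against. Your local cohomology argument is the standard one and is complete: writing $a=\depth(U)$, $b=\depth(M)$, $c=\depth(N)$ and using $\depth(L)=\min\{i: H^i_{\mm}(L)\neq 0\}$, each inequality follows from exactness of a three-term stretch of the long exact sequence, and your index bookkeeping is right in all three cases (in particular, $i<c+1$ gives $H^{i-1}_{\mm}(N)=0$ for part (2), and $i<a-1$ gives $H^{i+1}_{\mm}(U)=0$ for part (3), which is exactly where the $+1$ and $-1$ come from). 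The one hypothesis you should make explicit is that the modules are finitely generated (and nonzero), since the cohomological characterization of depth---equivalently $\depth(L)=\min\{i:\Ext^i_S(K,L)\neq 0\}$---requires this; the same caveat applies to the graded case, where $\mm$ is the graded maximal ideal. The alternative you mention, induction on depth via a common regular element, is the proof found in the standard references (e.g.\ Bruns--Herzog or Villarreal) that papers like this one implicitly cite; it buys independence from local cohomology but at the cost of handling the case $\depth=0$ separately, whereas your argument treats all three inequalities uniformly.
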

\begin{Lemma}[{\cite[Lemma 2.2]{AR1}}]\label{le1}
Let $0\rightarrow U\rightarrow V\rightarrow W\rightarrow 0$ be a short exact sequence of $\ZZ^{n}$-graded $S$-modules. Then
\,\,$\sdepth(V)\geq\min\{\sdepth(U),\sdepth(W)\}.$
\end{Lemma}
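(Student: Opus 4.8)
The plan is to build an explicit Stanley decomposition of $V$ out of decompositions of $U$ and $W$ that realize their Stanley depths, and to read off the bound from the dimensions of the resulting Stanley spaces. First I would identify $U$ with its image under the injection, so that $U$ is a $\ZZ^{n}$-graded submodule of $V$ and $W\cong V/U$ with canonical graded surjection $\pi\colon V\to W$. Fix Stanley decompositions $\mathcal{D}_U\colon U=\bigoplus_j u_jK[B_j]$ and $\mathcal{D}_W\colon W=\bigoplus_i \bar v_iK[A_i]$ with $\sdepth(\mathcal{D}_U)=\sdepth(U)$ and $\sdepth(\mathcal{D}_W)=\sdepth(W)$, where each $B_j,A_i\subseteq\{x_1,\dots,x_n\}$ and every generator is homogeneous.

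The central step is to lift the decomposition of $W$. For each $i$ choose a homogeneous preimage $v_i\in V$ with $\pi(v_i)=\bar v_i$ (possible since $\pi$ is graded and surjective), and consider $v_iK[A_i]\subseteq V$. I would check that this is a genuine Stanley space, i.e. that $v_iK[A_i]$ is free over $K[A_i]$: since $\pi$ is $S$-linear we have $\pi(v_iu)=\bar v_iu$ for every monomial $u\in K[A_i]$, so any $K$-linear relation among the $v_iu$ maps to a relation among the $\bar v_iu$; freeness of $\bar v_iK[A_i]$ forces all coefficients to vanish, and the same computation shows $\pi$ restricts to an isomorphism $v_iK[A_i]\xrightarrow{\sim}\bar v_iK[A_i]$.

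Next I would verify that
\[
V=\Big(\bigoplus_i v_iK[A_i]\Big)\oplus\Big(\bigoplus_j u_jK[B_j]\Big)
\]
as $\ZZ^{n}$-graded $K$-vector spaces. Spanning follows by a one-step diagram chase: given a homogeneous $v\in V$, write $\pi(v)=\sum_i \bar v_ip_i$ using $\mathcal{D}_W$; then $v-\sum_i v_ip_i\in\ker\pi=U$, which lies in $\bigoplus_j u_jK[B_j]$ by $\mathcal{D}_U$. For directness, suppose $\sum_i v_ip_i+\sum_j u_jq_j=0$; applying $\pi$ annihilates the $U$-part and gives $\sum_i\bar v_ip_i=0$, so $\bar v_ip_i=0$ for all $i$ by $\mathcal{D}_W$, whence $v_ip_i=0$ by the isomorphism of the previous step, and the surviving relation $\sum_j u_jq_j=0$ collapses by $\mathcal{D}_U$.

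This display is then a Stanley decomposition $\mathcal{D}$ of $V$ whose Stanley spaces have dimensions among the $|A_i|$ and the $|B_j|$, so $\sdepth(\mathcal{D})\geq\min\{\sdepth(U),\sdepth(W)\}$ and hence $\sdepth(V)\geq\sdepth(\mathcal{D})\geq\min\{\sdepth(U),\sdepth(W)\}$. I expect the main obstacle to be the lifting step: one must ensure that the chosen preimages $v_i$ really yield free $K[A_i]$-modules and that the two families together remain an internal direct sum, both of which hinge on $\pi$ being injective on each lifted space. Everything else is bookkeeping with the exactness of the sequence.
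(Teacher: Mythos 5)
Your proposal is correct and is essentially the same argument as the paper's source for this statement: the paper gives no proof of its own but cites \cite{AR1}, where the lemma is proved exactly this way, by choosing a Stanley decomposition of $U$, lifting one of $W\cong V/U$ through the graded surjection via homogeneous preimages, and checking freeness and directness by applying $\pi$. Your verification of the two delicate points (freeness of each lifted space $v_iK[A_i]$ and directness of the combined sum) is sound, so there is no gap.
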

\begin{Lemma}[{\cite[Lemma 3.6]{HVZ}}]\label{111}
Let $I\subset S$ be a monomial ideal and $\bar{S}=S[x_{n+1},x_{n+2},\dots,x_{n+r}]$ be a polynomial ring in $n+r$ variables then $\depth(\bar{S}/I\bar{S})=\depth(S/IS)+r \text{\,\,\,\,\,and\,\,\,\,\,} \sdepth(\bar{S}/I\bar{S})=\sdepth(S/IS)+r.$
\end{Lemma}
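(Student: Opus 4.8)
The plan is to handle the two equalities separately, in each case exploiting one structural fact. Write $M=S/IS$ and $\bar M=\bar S/I\bar S$; since no generator of $I$ involves any of $x_{n+1},\dots,x_{n+r}$, we have $\bar M\cong M\otimes_K K[x_{n+1},\dots,x_{n+r}]$, so $x_{n+1},\dots,x_{n+r}$ is a regular sequence on $\bar M$ with $\bar M/(x_{n+1},\dots,x_{n+r})\bar M\cong M$. For the depth equality I would induct on $r$, the case $r=0$ being vacuous. In the inductive step set $N=\bar M$ and consider
\[
0\To N \xrightarrow{\,\cdot\, x_{n+1}\,} N \To N/x_{n+1}N \To 0 .
\]
Lemma~\ref{le01}(3) gives $\depth(N/x_{n+1}N)\ge\depth(N)-1$, hence $\depth(N)\le\depth(N/x_{n+1}N)+1$, while the reverse inequality is the standard fact that a homogeneous nonzerodivisor of positive degree lowers depth by exactly one. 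As $N/x_{n+1}N$ is the same construction with one fewer adjoined variable, the inductive hypothesis gives $\depth(\bar M)=\depth(M)+r$.

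For Stanley depth the inequality $\sdepth(\bar M)\ge\sdepth(M)+r$ is elementary: from a Stanley decomposition $M=\bigoplus_i w_iK[A_i]$ one forms $\bar M=\bigoplus_i w_iK[A_i\cup\{x_{n+1},\dots,x_{n+r}\}]$, whose Stanley spaces have dimensions $|A_i|+r$. For the reverse inequality --- indeed for the equality in one stroke --- I would invoke the combinatorial description of Stanley depth from \cite{HVZ}. Choose $g\in\mathbb{Z}_{\ge 0}^{\,n+r}$ bounding the exponent vectors of the generators of $I\bar S$; because these generators do not involve the new variables, we may take $g_{n+1}=\dots=g_{n+r}=0$. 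Then in the characteristic poset $P^{g}_{\bar M}$ (the exponent vectors $c\le g$ with $x^{c}\notin I\bar S$) every new coordinate is pinned to $0$, so $c\mapsto(c_1,\dots,c_n)$ is a poset isomorphism $P^{g}_{\bar M}\cong P^{g'}_{M}$, where $g'$ is the truncation of $g$ to its first $n$ coordinates, and interval partitions of the two posets correspond bijectively. In this model the Stanley space attached to an interval $[c,d]$ has dimension $|\{i:d_i=g_i\}|$; since $d_{n+j}=0=g_{n+j}$ for every $j$, each of the $r$ new coordinates lies in this set for every interval. Thus each Stanley space gains exactly $r$ in dimension, every partition gains exactly $r$ in its Stanley depth, and taking the maximum over partitions yields $\sdepth(\bar M)=\sdepth(M)+r$.

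The step I expect to be the genuine obstacle is the upper bound $\sdepth(\bar M)\le\sdepth(M)+r$. Unlike its lower counterpart it cannot be obtained by transporting a single decomposition, since a priori $\bar M$ could carry a decomposition more efficient than any lifted from $M$; excluding this is precisely what the finite poset model of \cite{HVZ} achieves, by turning the comparison into an honest bijection of interval partitions. On the depth side, by contrast, the upper bound fell out immediately from Lemma~\ref{le01}(3) and the lower bound from the regular-sequence property, so no combinatorial input is required there.
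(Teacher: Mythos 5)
Your argument is correct, but there is nothing in the paper itself to compare it against: the paper does not prove this lemma at all --- it imports it as a known result, citing \cite[Lemma 3.6]{HVZ}. Measured against the standard proof in the literature, your depth half is exactly the usual argument: $x_{n+1},\dots,x_{n+r}$ is a regular sequence on $\bar{M}=\bar{S}/I\bar{S}$ with quotient $M=S/IS$, and reduction modulo a homogeneous regular element of positive degree lowers depth by exactly one; your additional appeal to Lemma~\ref{le01}(3) for one of the two inequalities is harmless but redundant, since the quoted standard fact already gives both directions (one should also note, though it is routine, that the depth of $\bar{M}/x_{n+1}\bar{M}$ over $\bar{S}$ agrees with its depth over the smaller polynomial ring, so that the induction applies). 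For Stanley depth, lifting a decomposition of $M$ to one of $\bar{M}$ is the standard lower bound. The upper bound is where you diverge from the usual treatment: you invoke the characteristic-poset theorem of \cite{HVZ}, and this does work --- that theorem holds for any $g$ dominating the exponent vectors of the minimal generators, so the choice $g_{n+1}=\cdots=g_{n+r}=0$ is legitimate, truncation is a poset isomorphism, and every interval's count $|\{i : d_i=g_i\}|$ grows by exactly $r$, giving the equality in one stroke. But this uses the full strength of the poset description, which is the main theorem of \cite{HVZ}; a cheaper route to $\sdepth(\bar{M})\le\sdepth(M)+r$ is a direct restriction argument: take an optimal Stanley decomposition $\bar{M}=\bigoplus_i u_iK[Z_i]$ and intersect with the $K$-subspace spanned by monomials of degree zero in the new variables; the surviving summands are exactly those with $u_i$ free of the new variables, and they contribute $u_iK[Z_i\setminus\{x_{n+1},\dots,x_{n+r}\}]$, which is a Stanley decomposition of $M$ of Stanley depth at least $\sdepth(\bar{M})-r$. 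Both routes are valid; yours trades elementarity for the convenience of an honest bijection of interval partitions.
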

\begin{Theorem}[{\cite[Theorem 2.3]{O}}]\label{oka}
Let $I\subset S$ be a monomial ideal of $S$ and $m$ be the number of minimal monomial generators of $I$, then $\sdepth(I)\geq \min\big\{1,n-\lfloor\frac{m}{2}\rfloor\big\}.$
\end{Theorem}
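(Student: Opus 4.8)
The statement packages two lower bounds: the trivial one, $\sdepth(I)\ge 1$, valid for every nonzero monomial ideal, and the substantive one, $\sdepth(I)\ge n-\lfloor m/2\rfloor$. I would focus entirely on the latter, taking the former as known (every nonzero monomial ideal admits a Stanley decomposition with no zero-dimensional Stanley space). Write $\mathcal{G}(I)=\{u_1,\dots,u_m\}$ for the minimal monomial generators and let $a_1,\dots,a_m\in\ZZ^n$ be their exponent vectors.

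The natural framework is the combinatorial description of Stanley depth due to Herzog--Vladoiu--Zheng \cite{HVZ}: fixing $g\in\ZZ^n$ with $a_j\le g$ for all $j$, one has $\sdepth(I)=\max_{\mathcal{Q}}\min_{[c,d]\in\mathcal{Q}}\#\{i:d_i=g_i\}$, where $\mathcal{Q}$ ranges over the partitions of the poset $P_I^g=\{c\in\ZZ^n:0\le c\le g,\ x^c\in I\}$ into intervals $[c,d]$, each such interval corresponding to a Stanley space $x^c K[x_i:d_i=g_i]$. Thus it suffices to produce one partition of $P_I^g$ into intervals whose upper endpoint $d$ satisfies $d_i=g_i$ for all but at most $\lfloor m/2\rfloor$ indices $i$; equivalently, a Stanley decomposition of $I$ in which every Stanley space omits at most $\lfloor m/2\rfloor$ of the variables.

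I would first treat $m\le 2$ to isolate the mechanism. For $m=1$, the ideal $I\cong S(-\deg u_1)$ is free, so $\sdepth(I)=n=n-\lfloor 1/2\rfloor$. For $m=2$, set $t=\lcm(u_1,u_2)/u_2$ and use the $K$-vector space decomposition $I=u_1 S\ \oplus\ u_2\,(S/(t))$, the second summand being the monomials divisible by $u_2$ but not by $u_1$. Since $\sdepth(S/(t))=n-1$ for a nonunit monomial $t$, this realizes $\sdepth(I)\ge n-1=n-\lfloor 2/2\rfloor$, and it records the governing heuristic: resolving one pair of generators costs at most one variable. The plan for general $m$ is to iterate this, organizing the $m$ generators into $\lfloor m/2\rfloor$ pairs (one generator left over when $m$ is odd) and building the interval partition of $P_I^g$ so that each pair is responsible for at most one coordinate in which a given interval fails to reach the ceiling $g$.

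The main obstacle is precisely that this bookkeeping must be global rather than local. Peeling generators two at a time through the standard prime filtration $I=\bigoplus_{j}u_j\,(S/L_j)$, with $L_j=(u_1,\dots,u_{j-1}):u_j$, is too wasteful: the colon ideal $L_j$ can carry up to $j-1$ minimal generators, giving only $\sdepth(S/L_j)\ge n-(j-1)$ and hence the far weaker bound $\sdepth(I)\ge n-(m-1)$. The trouble is that each newly added generator interacts with all the earlier ones simultaneously, so a local ``two generators, one lost variable'' estimate does not survive the iteration. Overcoming this requires choosing the interval partition of $P_I^g$ so that the coordinates lost across the entire decomposition are controlled by a matching on the generator set: one must show that the directions in which intervals are truncated can always be amortized against pairs of generators, so that no single Stanley space ever omits more than $\lfloor m/2\rfloor$ variables. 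Making this amortization precise --- a combinatorial partition/matching argument on the exponent vectors $a_1,\dots,a_m$ inside the box $[0,g]$ --- is the technical heart of the proof, and I would expect to carry it out by induction on $m$, splitting off a carefully chosen pair whose removal lowers the relevant count by exactly one.
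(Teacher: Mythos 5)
Two preliminary remarks. First, the paper contains no proof of this statement to compare against: it is quoted as Theorem 2.3 of Okazaki's paper \cite{O}, so the only meaningful benchmark is Okazaki's published argument. Second, as printed the bound reads $\min\big\{1,n-\lfloor\frac{m}{2}\rfloor\big\}$, which is trivially implied by $\sdepth(I)\geq 1$ alone; the intended statement (and the one actually invoked in Remark \ref{path1} and Lemma \ref{path}) is the $\max$ version, $\sdepth(I)\geq\max\big\{1,n-\lfloor\frac{m}{2}\rfloor\big\}$. You implicitly read it that way, which is the right call.

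Judged against that intended statement, your proposal has a genuine gap: everything for $m\geq 3$, which is the entire content of the theorem, is deferred rather than proved. What you do write is sound --- the Herzog--Vladoiu--Zheng interval-partition reformulation is the correct framework, the case $m=1$ is immediate, the case $m=2$ via $I=u_1S\oplus u_2\,(S/(t))$ with $t=\lcm(u_1,u_2)/u_2$ is a valid Stanley decomposition giving $n-1$, and your diagnosis that sequential peeling through $L_j=(u_1,\dots,u_{j-1}):u_j$ only yields $n-(m-1)$ is accurate. But the step you label ``the technical heart'' --- pairing the $m$ generators and amortizing each lost coordinate against a pair --- is precisely the theorem, and you supply no construction of the matching, no proof that a ``carefully chosen pair'' exists whose removal lowers the deficiency count by exactly one, and no invariant that your proposed induction on $m$ would preserve. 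The obvious inductive attempt collapses for the very reason you identify in criticizing the filtration: splitting $I$ as (the part of $I$ coming from two generators) $\oplus$ (the rest) forces colon ideals into the complementary summand, and their number of minimal generators is not bounded by $m-2$, so the inductive hypothesis does not apply to it. A correct proof must build the interval partition of $P_I^g$ globally --- this is exactly the work carried out in \cite{O} --- and nothing in your sketch substitutes for it. In short: right framework, correct warm-up cases, but the theorem itself remains unproved.
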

\section[Depth and Stanley depth of cyclic modules associated to $\mathcal{P}_{n,m}$ and $\mathcal{C}_{n,m}$ when $m\leq 3$]{Depth and Stanley depth of cyclic modules associated to $\mathcal{P}_{n,m}$ and $\mathcal{C}_{n,m}$ when $1\leq m\leq 3$ }
Let $n\geq 2$ and $1\leq i\leq n$, for convenience we take $x_i:=x_{i1}$, $y_i:=x_{i2}$ and $z_i:=x_{i3}$, see Figures \ref{A11} and \ref{A12}. We set $S_{n,1}:=K[x_1,x_2,\dots,x_n]$,
$S_{n,2}:=K[x_1,x_2,\dots x_n,y_1,y_2,\dots,y_n]$ and $S_{n,3}:=K[x_1,x_2,\dots x_n,y_1,y_2,\dots,y_n,z_1,z_2,\dots,z_n]$. Clearly $\mathcal{P}_{n,1}\cong P_n$ and $\mathcal{C}_{n,1}\cong C_n$, the minimal sets of monomial generators of the edge ideals of $\mathcal{P}_{n,2}$, $\mathcal{P}_{n,3}$, $\mathcal{C}_{n,2}$ and $\mathcal{C}_{n,3}$ are given as:
$$\mathcal{G}(I(\mathcal{P}_{n,2}))=\cup^{n-1}_{i=1}\{x_{i}y_{i},x_{i}y_{i+1},x_{i}x_{i+1},x_{i+1}y_{i},y_{i}y_{i+1}\}\cup\{x_{n}y_{n}\},$$
$$\mathcal{G}(I(\mathcal{P}_{n,3}))=\cup^{n-1}_{i=1}\{x_{i}y_{i},x_{i}y_{i+1},x_{i}x_{i+1},x_{i+1}y_{i},y_{i}y_{i+1},y_{i}z_{i},
y_{i}z_{i+1},y_{i+1}z_{i},z_{i}z_{i+1}\}\cup \{x_{n}y_{n},y_{n}z_{n}\},$$
$$\mathcal{G}(I(\mathcal{C}_{n,2}))=\mathcal{G}(I(\mathcal{P}_{n,2}))\cup\big\{x_{1}y_{n},x_{1}x_{n},y_{1}x_{n},y_{1}y_{n}\big\} \text{\,\,and}$$
$$\mathcal{G}(I(\mathcal{C}_{n,3}))=\mathcal{G}(I(\mathcal{P}_{n,3}))\cup\big\{x_{1}y_{n},x_{1}x_{n},y_{1}x_{n},y_{1}y_{n},y_{1}z_{n},z_{1}y_{n},
z_{1}z_{n}\}.$$
In this section, we compute depth and Stanley depth of the cyclic modules $S_{n,m}/I(\mathcal{P}_{n,m})$ and $S_{n,m}/I(\mathcal{C}_{n,m})$, when $m=1,2,3$.
\begin{figure}[h!]
\centering
 \includegraphics[width=8cm]{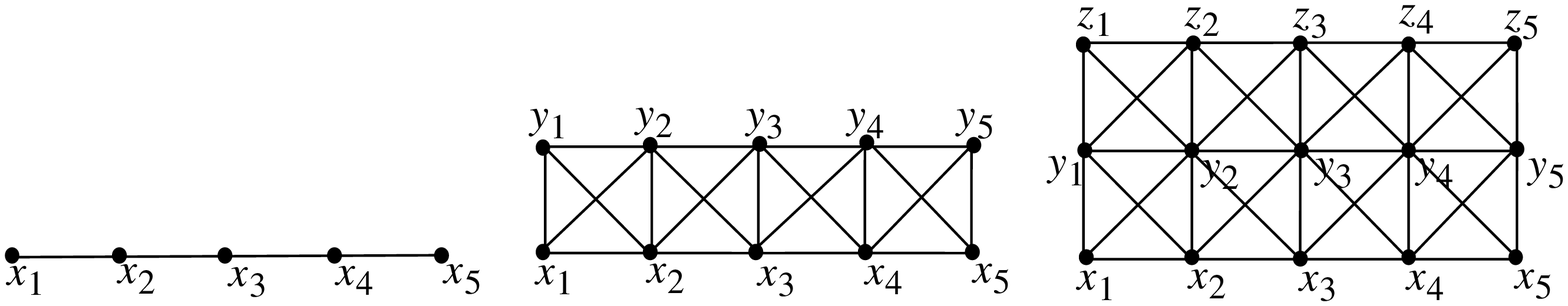}\\
 \caption {From left to right; $\mathcal{P}_{5,1}$, $\mathcal{P}_{5,2}$ and $\mathcal{P}_{5,3}$.}\label{A11}
\end{figure}

\begin{figure}[h!]
\centering
 \includegraphics[width=8cm]{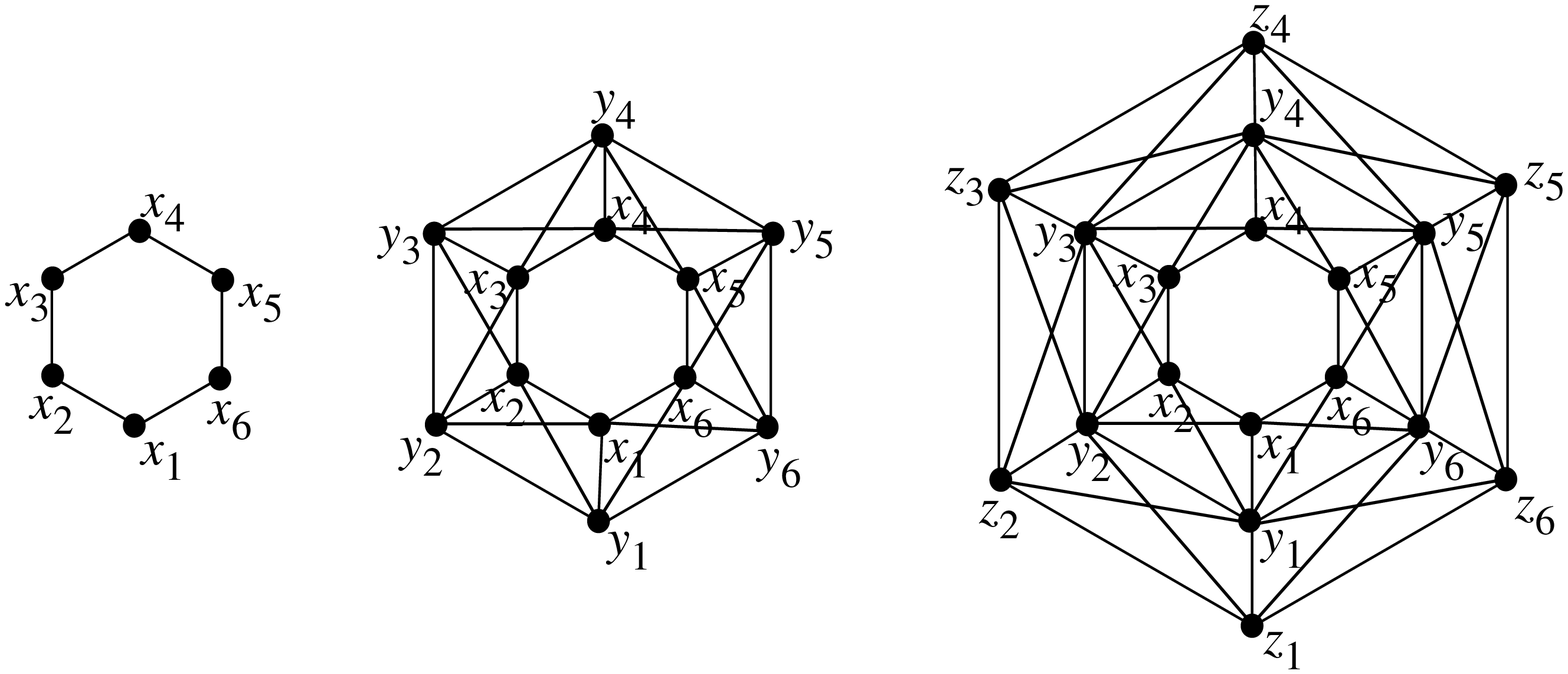}\\
 \caption {From left to right; $\mathcal{C}_{6,1}$, $\mathcal{C}_{6,2}$ and $\mathcal{C}_{6,3}$.}\label{A12}
\end{figure}

\begin{Remark}\label{case1}
{\em Note that for $n\geq 2$, $S_{n,1}/I(\mathcal{P}_{n,1})\cong S/I({P}_{n})$, thus by \cite[Lemma 2.8]{SM1} and \cite[Lemma 4]{ST} $\depth(S_{n,1}/I(\mathcal{P}_{n,1}))=\sdepth(S_{n,1}/I(\mathcal{P}_{n,1}))=\lceil\frac{n}{3}\rceil$.
Let $n\geq 3$, then $S_{n,1}/I(\mathcal{C}_{n,1})\cong S/I(C_n)$, and by \cite[Propositions 1.3,1.8]{MC4} $\depth(S_{n,1}/I(\mathcal{C}_{n,1}))=\lceil\frac{n-1}{3}\rceil\leq \sdepth(S_{n,1}/I(\mathcal{C}_{n,1}))\leq\lceil\frac{n}{3}\rceil.$}
\end{Remark}
\begin{Lemma}\label{The5}
Let $n\geq 1$, then $\depth(S_{n,2}/I(\mathcal{P}_{n,2}))=\sdepth(S_{n,2}/I(\mathcal{P}_{n,2}))=\lceil\frac{n}{3}\rceil$.
\end{Lemma}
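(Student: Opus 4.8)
The plan is to prove both equalities by induction on $n$, obtaining the lower bound $\geq\lceil\frac{n}{3}\rceil$ for $\depth$ and $\sdepth$ \emph{simultaneously} from short exact sequences, and the matching upper bound from a combinatorial count of maximal independent sets. The base cases $n\in\{1,2,3\}$ give the value $1=\lceil\frac{n}{3}\rceil$: in $\mathcal{P}_{1,2}\cong P_2$, in $\mathcal{P}_{2,2}\cong K_4$, and in $\mathcal{P}_{3,2}$ there is a vertex adjacent to all others (e.g. a middle-column vertex in $\mathcal{P}_{3,2}$), so a single such vertex is a maximal independent set. For the upper bound I would observe that a maximal independent set of $\mathcal{P}_{n,2}$ picks at most one vertex from each column $\{x_i,y_i\}$ and never from two consecutive columns, and that maximality forces the chosen columns to form an independent dominating set of the path $P_n$; the smallest such set has $\lceil\frac{n}{3}\rceil$ columns. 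Hence the minimum size of a maximal independent set is $\lceil\frac{n}{3}\rceil$, which bounds $\depth(S_{n,2}/I(\mathcal{P}_{n,2}))\leq\dim(S_{n,2}/\pp)=\lceil\frac{n}{3}\rceil$ for the minimal prime $\pp$ coming from that set, and also $\sdepth\leq\lceil\frac{n}{3}\rceil$: in any Stanley decomposition the squarefree monomial $u_F=\prod_{v\in F}v$ on a smallest maximal independent set $F$ lies in a Stanley space $wK[A]$, whence $\supp(w)\cup A$ is independent and contains $F$, so by maximality $\supp(w)\cup A=F$ and $|A|\leq|F|=\lceil\frac{n}{3}\rceil$.

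For the lower bound, for $n\geq 4$ I would run the sequence $0\to S_{n,2}/(I:x_{n-1})\to S_{n,2}/I\to S_{n,2}/(I,x_{n-1})\to 0$ with $I=I(\mathcal{P}_{n,2})$. Since $x_{n-1}$ is adjacent exactly to $x_{n-2},y_{n-2},y_{n-1},x_n,y_n$, the colon ideal kills all of columns $n-2,n-1,n$, giving $S_{n,2}/(I:x_{n-1})\cong(S_{n-3,2}/I(\mathcal{P}_{n-3,2}))[x_{n-1}]$, whose $\depth$ and $\sdepth$ are $\lceil\frac{n-3}{3}\rceil+1=\lceil\frac{n}{3}\rceil$ by the induction hypothesis and Lemma \ref{111}. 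The quotient $S_{n,2}/(I,x_{n-1})$ is the edge-ideal quotient of $G':=\mathcal{P}_{n,2}\setminus x_{n-1}$, which I would treat by a second sequence, this time splitting off $y_{n-1}$.

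In $G'$ the vertices $x_n$ and $y_n$ have, apart from each other, only the common neighbour $y_{n-1}$. Taking $0\to S/(I(G'):y_{n-1})\to S/I(G')\to S/(I(G'),y_{n-1})\to 0$, the colon again removes columns $n-2,n-1,n$ and yields $(S_{n-3,2}/I(\mathcal{P}_{n-3,2}))[y_{n-1}]$ with $\depth=\sdepth=\lceil\frac{n}{3}\rceil$, while deleting $y_{n-1}$ severs the edge $x_ny_n$ from the rest, so $G'\setminus y_{n-1}=\mathcal{P}_{n-2,2}\sqcup K_2$; one more sequence (splitting off the isolated edge and applying Lemma \ref{111}) shows this has $\depth,\sdepth\geq\lceil\frac{n-2}{3}\rceil+1\geq\lceil\frac{n}{3}\rceil$. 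Feeding these through Lemma \ref{le01}(1) and Lemma \ref{le1} gives $\depth(S/I(G')),\sdepth(S/I(G'))\geq\lceil\frac{n}{3}\rceil$, and then the first sequence yields the same bound for $S_{n,2}/I$. Combined with the upper bound this forces equality, and since every step uses the identical inequality for both invariants, $\depth$ and $\sdepth$ come out equal.

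The delicate point, and what dictates pivoting on $x_{n-1}$ rather than on the terminal vertex $x_n$, is the residue $n\equiv 1\pmod 3$. If one peels off $x_n$ instead, the quotient branch reduces to $\mathcal{P}_{n-1,2}$, whose value is only $\lceil\frac{n-1}{3}\rceil=\lceil\frac{n}{3}\rceil-1$, so Lemma \ref{le01}(1)/Lemma \ref{le1} deliver a lower bound short by one, and the naive recursion cannot recover it because the Stanley-depth calculus has no usable analogue of Lemma \ref{le01}(3). The device that repairs this is exactly that pivoting on $x_{n-1}$ and then on $y_{n-1}$ makes \emph{both} branches land on an $\mathcal{P}_{*,2}$-quotient with a free variable adjoined (or on $\mathcal{P}_{n-2,2}\sqcup K_2$), each controlled by the induction hypothesis at level $\lceil\frac{n}{3}\rceil$. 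Verifying the precise shape of these colon and quotient ideals, in particular that deleting $y_{n-1}$ disconnects the terminal $K_2$, is the one computation I would carry out carefully; the remainder is bookkeeping with Lemmas \ref{le01}, \ref{le1} and \ref{111}.
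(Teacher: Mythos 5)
Your proposal is correct, and it reaches the result by a genuinely different route than the paper. The paper obtains the lower bound $\depth,\sdepth(S_{n,2}/I(\mathcal{P}_{n,2}))\geq\lceil\frac{n}{3}\rceil$ from the diameter bounds of Fouli and Morey (\cite[Theorems 3.1, 4.18]{FM}, using $\diam(\mathcal{P}_{n,2})=n-1$), and the upper bound by induction via the colon trick: since $y_{n-1}\notin I(\mathcal{P}_{n,2})$, \cite[Corollary 1.3]{AR1} (resp. \cite[Proposition 2.7]{MC}) bounds $\depth$ (resp. $\sdepth$) of $S_{n,2}/I(\mathcal{P}_{n,2})$ by that of $S_{n,2}/(I(\mathcal{P}_{n,2}):y_{n-1})\cong (S_{n-3,2}/I(\mathcal{P}_{n-3,2}))[y_{n-1}]$. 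You do the mirror image. Your upper bound is combinatorial: both invariants are at most the minimum cardinality of a maximal independent set, via $\depth \leq \dim(S/\pp)$ for $\pp\in\Ass(S_{n,2}/I(\mathcal{P}_{n,2}))$ on the depth side and via your argument that a Stanley space $wK[A]$ containing $\prod_{v\in F}v$ must satisfy $\supp(w)\cup A=F$ on the sdepth side; both are sound, and the projection of maximal independent sets of $\mathcal{P}_{n,2}$ onto independent dominating sets of $P_n$ correctly identifies that minimum as $\lceil\frac{n}{3}\rceil$. Your lower bound is a self-contained induction through short exact sequences using only Lemmas \ref{le01}, \ref{le1} and \ref{111}; the structural computations check out (both colons, at $x_{n-1}$ and then at $y_{n-1}$ in the vertex-deleted graph, collapse columns $n-2,n-1,n$ and leave $(S_{n-3,2}/I(\mathcal{P}_{n-3,2}))$ with one free variable, and removing all of column $n-1$ leaves $\mathcal{P}_{n-2,2}\sqcup K_2$, whose value $\lceil\frac{n-2}{3}\rceil+1\geq\lceil\frac{n}{3}\rceil$), so every branch sits at level at least $\lceil\frac{n}{3}\rceil$; this is exactly what defeats the residue $n\equiv 1\pmod 3$, and your diagnosis that pivoting at $x_n$ would fall short by one, with no sdepth analogue of Lemma \ref{le01}(3) to recover it, is precisely right. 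As for what each approach buys: the paper's proof is shorter because it outsources the lower bound to \cite{FM}, while yours is more elementary and self-contained, treats depth and Stanley depth by one uniform argument, and uses the same exact-sequence technique the paper itself deploys for the harder cycle case (Theorem \ref{The7}). The one point you should record explicitly is the lower bound in the base cases $n\leq 3$: you need $\depth,\sdepth\geq 1$ there, which is immediate (the graded maximal ideal is not an associated prime of a reduced quotient of positive dimension, and a decomposition such as $K[x_1]\oplus x_2K[x_2]\oplus y_1K[y_1]\oplus y_2K[y_2]$ for $\mathcal{P}_{2,2}\cong K_4$ has Stanley depth $1$), but it anchors the induction and so deserves a sentence.
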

\begin{proof}
If $n=1$ then by Remark \ref{case1} the result holds. Let $n\geq 2$, first we prove the result for $\depth$. Since $\diam(\mathcal{P}_{n,2})=n-1$, thus by \cite[Theorem 3.1]{FM} $\depth(S_{n,2}/I(\mathcal{P}_{n,2}))\geq \lceil\frac{n}{3}\rceil$. Now we prove the reverse inequality. For $n=2,3$ the required inequality is trivial. Let $n\geq 4$, we prove the inequality by induction on $n$. Since $y_{n-1}\not\in I(\mathcal{P}_{n,2})$, thus by \cite[Corollary 1.3]{AR1}
$$\depth(S_{n,2}/I(\mathcal{P}_{n,2}))\leq \depth(S_{n,2}/(I(\mathcal{P}_{n,2}):y_{n-1})).$$ As we can see that $S_{n,2}/(I(\mathcal{P}_{n,2}):y_{n-1})\cong S_{n-3,2}/I(\mathcal{P}_{n-3,2})[y_{n-1}]$, therefore by induction and Lemma \ref{111}
$\depth(S_{n,2}/(I(\mathcal{P}_{n,2}):y_{n-1}))\leq \lceil\frac{n-3}{3}\rceil+1=\lceil\frac{n}{3}\rceil$.
Proof for Stanley depth is similar using \cite[Theorem 4.18]{FM} and \cite[Proposition 2.7]{MC}.
 \end{proof}

\begin{Lemma}\label{Th31}
Let $n\geq 1$, then $\depth(S_{n,3}/I(\mathcal{P}_{n,3}))=\sdepth(S_{n,3}/I(\mathcal{P}_{n,3}))=\lceil\frac{n}{3}\rceil.$
\end{Lemma}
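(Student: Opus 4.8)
The plan is to mirror the proof of Lemma~\ref{The5} for $m=2$, handling the lower and upper bounds for $\depth$ separately and then transcribing the argument almost verbatim to $\sdepth$. For the lower bound, I would use the fact that in a strong product the distance factors as $\d_{\mathcal{P}_{n,3}}((i,j),(i',j'))=\max\{|i-i'|,|j-j'|\}$, so that $\diam(\mathcal{P}_{n,3})=\max\{n-1,2\}$. For $n\geq 3$ this equals $n-1$, and \cite[Theorem 3.1]{FM} then yields $\depth(S_{n,3}/I(\mathcal{P}_{n,3}))\geq\lceil\frac{n}{3}\rceil$; for $n=2$ the same bound gives $1=\lceil\frac{2}{3}\rceil$, and $n=1$ is covered by Remark~\ref{case1} since $\mathcal{P}_{1,3}\cong P_3$.

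For the reverse inequality I would induct on $n$, with the base cases $n\leq 3$ checked directly (each gives the value $1$). The decisive choice is to colon by the middle-row variable $y_{n-1}=x_{(n-1)2}$: being of degree one it is not a minimal generator of $I(\mathcal{P}_{n,3})$, so \cite[Corollary 1.3]{AR1} gives
\[
\depth(S_{n,3}/I(\mathcal{P}_{n,3}))\leq \depth\big(S_{n,3}/(I(\mathcal{P}_{n,3}):y_{n-1})\big).
\]
The key computation is that in the king-move adjacency of $\mathcal{P}_{n,3}$ the neighbours of the vertex $(n-1,2)$ are precisely $x_{n-2},y_{n-2},z_{n-2},x_{n-1},z_{n-1},x_n,y_n,z_n$, i.e.\ every vertex of columns $n-2,n-1,n$ other than $y_{n-1}$ itself. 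Colonizing by $y_{n-1}$ therefore pushes all eight of these variables into the colon ideal, leaves $y_{n-1}$ as a free polynomial variable, and leaves the generators supported on columns $1,\dots,n-3$ untouched, giving
\[
S_{n,3}/(I(\mathcal{P}_{n,3}):y_{n-1})\cong S_{n-3,3}/I(\mathcal{P}_{n-3,3})[y_{n-1}].
\]
Feeding in the induction hypothesis $\depth(S_{n-3,3}/I(\mathcal{P}_{n-3,3}))=\lceil\frac{n-3}{3}\rceil$ together with Lemma~\ref{111} then produces $\lceil\frac{n-3}{3}\rceil+1=\lceil\frac{n}{3}\rceil$, which closes the induction.

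The step I expect to demand the most care is the verification of this last isomorphism, and in particular the role of choosing the middle row $j=2$. Because row $2$ is adjacent in the $P_3$ direction to both rows $1$ and $3$, the vertical spread of the neighbourhood of $y_{n-1}$ covers all three rows, so that three entire columns collapse; had I instead colonized by a boundary-row variable such as $x_{n-1}$ or $z_{n-1}$, some $z$- or $x$-vertices of those columns would survive and the quotient would fail to be $\mathcal{P}_{n-3,3}$. I would therefore check directly from the explicit generating set $\mathcal{G}(I(\mathcal{P}_{n,3}))$ that no generator involving a surviving variable of columns $n-2,n-1,n$ remains, that $(n-1,2)$ is genuinely isolated after the reduction (so it contributes a true free variable), and that column $n-3$ is not adjacent to column $n-1$, so that no stray edge reaches $y_{n-1}$.

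Finally, the assertion for $\sdepth$ runs along the identical skeleton: \cite[Theorem 4.18]{FM} supplies the lower bound $\sdepth(S_{n,3}/I(\mathcal{P}_{n,3}))\geq\lceil\frac{n}{3}\rceil$ from the diameter, \cite[Proposition 2.7]{MC} plays the role of \cite[Corollary 1.3]{AR1} in producing the colon inequality, and Lemma~\ref{111} again accounts for the free variable $y_{n-1}$. Since the two invariants share these three ingredients and the same reduction $n\mapsto n-3$, the computations run in parallel and both collapse to the common value $\lceil\frac{n}{3}\rceil$.
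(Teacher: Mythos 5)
Your proposal is correct and follows essentially the same route as the paper: diameter plus \cite[Theorems 3.1, 4.18]{FM} for the lower bounds, then induction with a colon by a middle-row variable, \cite[Corollary 1.3]{AR1} (resp. \cite[Proposition 2.7]{MC}), the isomorphism $S_{n,3}/(I(\mathcal{P}_{n,3}):y)\cong S_{n-3,3}/I(\mathcal{P}_{n-3,3})[y]$, and Lemma~\ref{111}. The only (immaterial) differences are that the paper colons by $y_{2}$ rather than your mirror-symmetric choice $y_{n-1}$, and it settles the case $n=2$ via $\mathcal{P}_{2,3}\cong\mathcal{P}_{3,2}$ and Lemma~\ref{The5} instead of a direct check.
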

\begin{proof}
If $n=1$ then the result follows by Remark \ref{case1}. If $n=2$, then $S_{2,3}/I(P_{2,3})\cong S_{3,2}/I(P_{3,2})$ so we are done by Lemma \ref{The5}.  Let $n\geq 3$, we first prove the result for $\depth$. As $\diam(\mathcal{P}_{n,3})=n-1$, then by \cite[Theorem 3.1]{FM} we have $\depth(S_{n,3}/I(\mathcal{P}_{n,3}))\geq\lceil\frac{n}{3}\rceil$.
Now we prove the inequality $\depth(S_{n,3}/I(\mathcal{P}_{n,3}))\leq \lceil\frac{n}{3}\rceil$. If $n=3$, then the required inequality is trivial. Let $n\geq 4$, we prove the inequality by induction on $n$. As $y_{2}\not\in I(\mathcal{P}_{n,3})$, thus by \cite[Corollary 1.3]{AR1}
$$\depth(S_{n,3}/I(\mathcal{P}_{n,3}))\leq \depth(S_{n,3}/(I(\mathcal{P}_{n,3}):y_{2})).$$ Since $S_{n,3}/(I(\mathcal{P}_{n,3}):y_{2})\cong S_{n-3,3}/I(\mathcal{P}_{n-3,3})[y_{2}].$ Therefore by induction and Lemma \ref{111}
$$\depth(S_{n,3}/(I(\mathcal{P}_{n,3}):y_{2}))\leq \lceil\frac{n-3}{3}\rceil+1=\lceil\frac{n}{3}\rceil.$$
Proof for Stanley depth is similar using \cite[Theorem 4.18]{FM} and \cite[Proposition 2.7]{MC}.
\end{proof}
\begin{Theorem}\label{The7}
Let $n\geq 3$, then $\sdepth(S_{n,2}/I(\mathcal{C}_{n,2}))\geq\depth(S_{n,2}/I(\mathcal{C}_{n,2}))=\lceil\frac{n-1}{3}\rceil$.
\end{Theorem}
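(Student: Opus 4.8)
The plan is to separate the statement into the depth equality and the Stanley-depth inequality $\sdepth \ge \depth$. The lower bound $\depth(S_{n,2}/I(\mathcal{C}_{n,2}))\ge\lceil\frac{n-1}{3}\rceil$ and the \emph{whole} Stanley-depth assertion rest on the same short exact sequence bookkeeping, using Lemma \ref{le01} and Lemma \ref{le1} to reduce to the already settled path cases of Lemma \ref{The5}. The genuine difficulty is the reverse inequality $\depth\le\lceil\frac{n-1}{3}\rceil$.

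For the lower bound I would break the cycle in two steps. First use the sequence $0\to S_{n,2}/(I(\mathcal{C}_{n,2}):x_n)\to S_{n,2}/I(\mathcal{C}_{n,2})\to S_{n,2}/(I(\mathcal{C}_{n,2}),x_n)\to 0$. The neighbours of $x_n$ are exactly $x_1,y_1,x_{n-1},y_{n-1},y_n$, so the colon quotient is $S_{n-3,2}/I(\mathcal{P}_{n-3,2})$ with the free variable $x_n$ adjoined; by Lemma \ref{111} and Lemma \ref{The5} it has depth $\lceil\frac{n-3}{3}\rceil+1=\lceil\frac{n}{3}\rceil$. The quotient $S_{n,2}/(I(\mathcal{C}_{n,2}),x_n)$ is the edge ring of $\mathcal{C}_{n,2}\setminus x_n$; I apply a second sequence splitting off $y_n$. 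Deleting $y_n$ as well collapses the cycle to the path product $\mathcal{P}_{n-1,2}$ (depth $\lceil\frac{n-1}{3}\rceil$), while the colon by $y_n$ again strips a closed neighbourhood and leaves $\mathcal{P}_{n-3,2}$ with a free variable (depth $\lceil\frac{n}{3}\rceil$). Feeding these into Lemma \ref{le01}(1) twice yields $\depth(S_{n,2}/I(\mathcal{C}_{n,2}))\ge\lceil\frac{n-1}{3}\rceil$. Replacing Lemma \ref{le01}(1) by Lemma \ref{le1} and using the Stanley-depth halves of Lemma \ref{111} and Lemma \ref{The5}, the very same two sequences give $\sdepth(S_{n,2}/I(\mathcal{C}_{n,2}))\ge\lceil\frac{n-1}{3}\rceil$, which already settles the Stanley-depth inequality of the theorem.

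For the upper bound the behaviour splits on $n\bmod 3$. When $n\not\equiv 1\pmod 3$ one has $\lceil\frac{n}{3}\rceil=\lceil\frac{n-1}{3}\rceil$, so it suffices to observe $x_n\notin I(\mathcal{C}_{n,2})$ and apply \cite[Corollary 1.3]{AR1}: $\depth(S_{n,2}/I(\mathcal{C}_{n,2}))\le\depth(S_{n,2}/(I(\mathcal{C}_{n,2}):x_n))=\lceil\frac{n}{3}\rceil=\lceil\frac{n-1}{3}\rceil$, matching the lower bound and giving equality, and likewise for $\sdepth$ via Theorem \ref{oka}-type estimates if sharper Stanley control is wanted.

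The main obstacle is the residual case $n\equiv 1\pmod 3$, where the target $\lceil\frac{n-1}{3}\rceil=\lceil\frac{n}{3}\rceil-1$ lies strictly below every bound the elementary tools produce. By the cyclic symmetry of $\mathcal{C}_{n,2}$ \emph{every} single-variable colon has depth exactly $\lceil\frac{n}{3}\rceil$, and the smallest maximal independent set (equivalently the minimal vertex cover governing the associated primes) also caps things only at $\lceil\frac{n}{3}\rceil$; so neither \cite[Corollary 1.3]{AR1} nor any purely combinatorial invariant can remove the extra unit. Moreover the Depth Lemma on the first sequence places us in the configuration $\depth(\text{sub})=\lceil\frac{n}{3}\rceil>\depth(\text{quotient})$, which does not determine the middle depth. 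To close this case I would instead pin down $\depth(S_{n,2}/(I(\mathcal{C}_{n,2}),x_n))$ exactly and trace the long exact sequence in local cohomology of the first short exact sequence to show that $H^{\lceil(n-1)/3\rceil}_{\mm}$ of the middle module is nonzero. This difficulty is precisely the one already present for $S/I(C_n)$ (cf.\ the results \cite[Propositions 1.3,1.8]{MC4} invoked in Remark \ref{case1}), where the same drop occurs for $n\equiv 1\pmod 3$; the smallest instance $n=4$, in which $\mathcal{C}_{4,2}$ is a join and $S_{4,2}/I(\mathcal{C}_{4,2})$ is the fibre product over $K$ of two copies of $K[a,b,c,d]/(ab,cd)$ and hence has depth $1=\lceil\frac{3}{3}\rceil$, confirms that the value is genuinely $\lceil\frac{n-1}{3}\rceil$. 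Carrying out this homological step uniformly in $n$ is the part I expect to require real care.
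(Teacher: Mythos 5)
Your lower bound and your Stanley-depth argument coincide with the paper's proof: the same two short exact sequences (splitting off $x_n$, then splitting off $y_n$ from $(I(\mathcal{C}_{n,2}),x_n)$), the same identifications of the colon and sum ideals with $S_{n-3,2}/I(\mathcal{P}_{n-3,2})[\,\cdot\,]$ and $S_{n-1,2}/I(\mathcal{P}_{n-1,2})$, and Lemmas \ref{le01}, \ref{le1}, \ref{111}, \ref{The5}. Your upper bound for $n\not\equiv 1 \pmod 3$ via \cite[Corollary 1.3]{AR1} is also fine. But in the decisive case $n\equiv 1\pmod 3$ you do not have a proof: you check $n=4$ by an ad hoc fiber-product computation and then announce that you would pin down $\depth(S_{n,2}/(I(\mathcal{C}_{n,2}),x_n))$ and chase a long exact sequence in local cohomology to show nonvanishing of $H^{\lceil (n-1)/3\rceil}_{\mm}$, conceding that doing this uniformly in $n$ "requires real care." That unexecuted step is exactly the content of the theorem in this congruence class, so this is a genuine gap.

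Moreover, your structural claim that no elementary tool can remove the extra unit is incorrect, and seeing why points to the idea you are missing. The paper handles $n\equiv 1\pmod 3$, $n\geq 7$, by switching to the short exact sequence $0\to (I(\mathcal{C}_{n,2}):x_n)/I(\mathcal{C}_{n,2})\to S_{n,2}/I(\mathcal{C}_{n,2})\to S_{n,2}/(I(\mathcal{C}_{n,2}):x_n)\to 0$, whose \emph{submodule} is the colon module $(I(\mathcal{C}_{n,2}):x_n)/I(\mathcal{C}_{n,2})$ rather than a cyclic quotient. This module decomposes explicitly as a direct sum of five pieces, one for each neighbour $x_1,y_1,y_n,x_{n-1},y_{n-1}$ of $x_n$: three are isomorphic to $S_{n-3,2}/I(\mathcal{P}_{n-3,2})[x_n]$ and two (those attached to $x_{n-1}$ and $y_{n-1}$, which see only the columns $2,\dots,n-3$) are isomorphic to $S_{n-4,2}/I(\mathcal{P}_{n-4,2})[x_n]$. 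Hence its depth equals $\min\big\{\lceil\frac{n-3}{3}\rceil+1,\lceil\frac{n-4}{3}\rceil+1\big\}=\lceil\frac{n-1}{3}\rceil$, which for $n\equiv 1\pmod 3$ is strictly smaller than $\lceil\frac{n}{3}\rceil=\depth\big(S_{n,2}/(I(\mathcal{C}_{n,2}):x_n)\big)$. Now Lemma \ref{le01}(2) applied to this sequence gives $\lceil\frac{n-1}{3}\rceil\geq\min\big\{\depth(S_{n,2}/I(\mathcal{C}_{n,2})),\lceil\frac{n}{3}\rceil+1\big\}$, and since the second alternative is too large, it forces $\depth(S_{n,2}/I(\mathcal{C}_{n,2}))\leq\lceil\frac{n-1}{3}\rceil$; combined with your (and the paper's) lower bound this yields equality. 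So the missing ingredient is not homological machinery but the exact depth of the colon module $(I(\mathcal{C}_{n,2}):x_n)/I(\mathcal{C}_{n,2})$, computed via a direct-sum decomposition into path products of two different lengths: the shorter $\mathcal{P}_{n-4,2}$ summands are what pull the depth below $\lceil\frac{n}{3}\rceil$, a drop that single-variable colons of the form $S_{n,2}/(I(\mathcal{C}_{n,2}):v)$ indeed cannot detect, exactly as your symmetry objection says.
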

\begin{proof}
We first prove that $\depth(S_{n,2}/I(\mathcal{C}_{n,2}))=\lceil\frac{n-1}{3}\rceil$. For $ n=3,4$ the result is trivial. Let $n\geq5$, consider the short exact sequence
\begin{equation}\label{es13}
0\longrightarrow S_{n,2}/(I(\mathcal{C}_{n,2}):x_{n})\xrightarrow{\cdot x_{n}} S_{n,2}/I(\mathcal{C}_{n,2})\longrightarrow S_{n,2}/(I(\mathcal{C}_{n,2}),x_{n})\longrightarrow 0,
\end{equation}
by Depth Lemma
\begin{equation*}
\depth(S_{n,2}/I(\mathcal{C}_{n,2}))\geq \min\{\depth(S_{n,2}/(I(\mathcal{C}_{n,2}):x_{n})), \depth(S_{n,2}/(I(\mathcal{C}_{n,2}),x_{n}))\}.
\end{equation*}
\begin{multline*}
(I(\mathcal{C}_{n,2}):x_{n})=\big(\cup^{n-3}_{i=2}\{x_{i}y_{i},x_{i}y_{i+1},x_{i}x_{i+1},x_{i+1}y_{i},y_{i}y_{i+1}\},x_{n-2}y_{n-2},x_{1},y_{1},x_{n-1},y_{n-1},y_{n}\big).
\end{multline*}
After renumbering the variables, we have
$S_{n,2}/(I(\mathcal{C}_{n,2}):x_{n})\cong S_{n-3,2}/I(\mathcal{P}_{n-3,2})[x_{n}].$ Thus by Lemmas \ref{The5} and \ref{111}
$\depth(S_{n,2}/(I(\mathcal{C}_{n,2}):x_{n}))=\lceil\frac{n-3}{3}\rceil+1=\lceil\frac{n}{3}\rceil.$ And let
 \begin{multline*}
J=(I(\mathcal{C}_{n,2}),x_{n})=\big(\cup^{n-2}_{i=1}\{x_{i}y_{i},x_{i}y_{i+1},x_{i}x_{i+1},x_{i+1}y_{i},y_{i}y_{i+1}\},x_{n-1}y_{n-1},x_{n},
x_{n-1}y_{n},y_{n-1}y_{n},\\y_{1}y_{n},x_{1}y_{n}\big)
=(I(\mathcal{P}_{n-1,2}),x_{n},x_{n-1}y_{n},y_{n-1}y_{n},y_{1}y_{n},x_{1}y_{n}).
\end{multline*}
Consider the following exact sequence
\begin{equation}\label{es14}
0\longrightarrow S_{n,2}/(J:y_{n})\xrightarrow{\cdot y_{n}} S_{n,2}/J\longrightarrow S_{n,2}/(J,y_{n})\longrightarrow 0,
\end{equation}
by Depth Lemma
\begin{equation*}
\depth(S_{n,2}/J)\geq \min\{\depth(S_{n,2}/(J:y_{n})), \depth(S_{n,2}/(J,y_{n}))\}.
\end{equation*}
As $(J,y_{n}) = (I(\mathcal{P}_{n-1,2}),x_{n},y_{n})$ and $S_{n,2}/(J,y_{n})\cong S_{n-1,2}/I(\mathcal{P}_{n-1,2}).$ Therefore by Lemma \ref{The5}
 $\depth(S_{n,2}/(J,y_{n}))=\lceil\frac{n-1}{3}\rceil.$
 Also
 \begin{eqnarray*}
 (J:y_{n})=\big(\cup^{n-3}_{i=2}\{x_{i}y_{i},x_{i}y_{i+1},x_{i}x_{i+1},x_{i+1}y_{i},y_{i}y_{i+1}\},x_{n-2}y_{n-2},x_{1},y_{1},x_{n-1},y_{n-1},x_{n}\big).
\end{eqnarray*}
After renumbering the variables, we get
$S_{n,2}/(J:y_{n})\cong S_{n-3,2}/I(\mathcal{P}_{n-3,2})[y_{n}].$ Therefore by Lemmas \ref{The5} and \ref{111}
$\depth(S_{n,2}/(I(\mathcal{C}_{n,2}):y_{n}))=\lceil\frac{n-3}{3}\rceil+1=\lceil\frac{n}{3}\rceil.$
If $n\equiv 0(\mod 3)$ or $n\equiv 2(\mod 3)$ then $\lceil\frac{n-1}{3}\rceil=\lceil\frac{n}{3}\rceil.$ By applying Depth Lemma on exact sequences (\ref{es13})  and (\ref{es14}), we have $\depth(S_{n,2}/I(\mathcal{C}_{n,2}))=\lceil\frac{n-1}{3}\rceil$, as required.
Now for $n\equiv 1(\mod 3)$, assume that $n\geq7$, then we have the following $S_{n,2}$-module isomorphism:
{\small\begin{multline*}
(I(\mathcal{C}_{n,2}):x_{n})/I(\mathcal{C}_{n,2})\cong  x_{1}\frac{K[x_{3},\dots,x_{n-1},y_{3},\dots,,y_{n-1}]}{(\bigcup^{n-2}_{i=3}\{x_{i}y_{i},x_{i}y_{i+1},x_{i}x_{i+1},x_{i+1}y_{i},y_{i}y_{i+1}\},x_{n-1}y_{n-1}\big)}[x_{1}] \\ \oplus y_{1}\frac{K[x_{3},\dots,x_{n-1},y_{3},\dots,,y_{n-1}]}{(\bigcup^{n-2}_{i=3}\{x_{i}y_{i},x_{i}y_{i+1},x_{i}x_{i+1},x_{i+1}y_{i},y_{i}y_{i+1}\},x_{n-1}y_{n-1}\big)}[y_{1}] \\ \oplus
y_{n}\frac{K[x_{2},\dots,x_{n-2},y_{2},\dots,,y_{n-2}]}{(\bigcup^{n-3}_{i=2}\{x_{i}y_{i},x_{i}y_{i+1},x_{i}x_{i+1},x_{i+1}y_{i},y_{i}y_{i+1}\},x_{n-2}y_{n-2}\big)}[y_{n}]  \\ \oplus
x_{n-1}\frac{K[x_{2},\dots,x_{n-3},y_{2},\dots,y_{n-3}]}{(\bigcup^{n-4}_{i=2}\{x_{i}y_{i},x_{i}y_{i+1},x_{i}x_{i+1},x_{i+1}y_{i},y_{i}y_{i+1}\},x_{n-3}y_{n-3}\big)}[x_{n-1}] \\ \oplus
y_{n-1}\frac{K[x_{2},\dots,x_{n-3},y_{2},\dots,y_{n-3}]}{(\bigcup^{n-4}_{i=2}\{x_{i}y_{i},x_{i}y_{i+1},x_{i}x_{i+1},x_{i+1}y_{i},y_{i}y_{i+1}\},x_{n-3}y_{n-3}\big)}[y_{n-1}].
\end{multline*}}
We can see that the first three summands are isomorphic to $S_{n-3,2}/I(\mathcal{P}_{n-3,2})[x_n]$ and last two summands are isomorphic to $S_{n-4,2}/I(\mathcal{P}_{n-4,2})[x_n].$
Thus by Lemmas \ref{The5} and \ref{111}, we have
$$\depth(I(\mathcal{C}_{n,2}):x_n)/I(\mathcal{C}_{n,2}))=\min\{\lceil\frac{n-3}{3}\rceil+1,\lceil\frac{n-4}{3}\rceil+1\}= \lceil\frac{n-1}{3}\rceil.$$
Now by using Depth Lemma on the following short exact sequence we get the required result.
$$0\longrightarrow (I(\mathcal{C}_{n,2}):x_{n})/{I(\mathcal{C}_{n,2})}\xrightarrow{\cdot x_{n}} S_{n,2}/I(\mathcal{C}_{n,2})\longrightarrow S_{n,2}/(I(\mathcal{C}_{n,2}):x_{n})\longrightarrow 0.$$
For Stanley depth the required result follows by applying Lemma \ref{le1} on the exact sequences (\ref{es13}) and (\ref{es14}).
\end{proof}
\begin{Corollary}\label{cor6}
Let $n\geq 3$, then $\lceil\frac{n-1}{3}\rceil\leq\sdepth(S_{n,2}/I(\mathcal{C}_{n,2}))\leq\lceil\frac{n}{3}\rceil$.
\end{Corollary}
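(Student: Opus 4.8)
The plan is to read off the two inequalities from separate sources, since almost everything needed has already been produced in the proof of Theorem \ref{The7}. For the lower bound there is essentially nothing to do: Theorem \ref{The7} already records $\sdepth(S_{n,2}/I(\mathcal{C}_{n,2})) \ge \depth(S_{n,2}/I(\mathcal{C}_{n,2})) = \lceil\frac{n-1}{3}\rceil$, which is exactly the left-hand inequality. Thus the entire content of the statement is the upper bound $\sdepth(S_{n,2}/I(\mathcal{C}_{n,2})) \le \lceil\frac{n}{3}\rceil$.

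For the upper bound I would invoke the Stanley-depth analogue of the colon inequality. A single variable can never belong to an edge ideal, so $x_n \notin I(\mathcal{C}_{n,2})$, and \cite[Proposition 2.7]{MC} then gives $\sdepth(S_{n,2}/I(\mathcal{C}_{n,2})) \le \sdepth(S_{n,2}/(I(\mathcal{C}_{n,2}):x_n))$. It remains only to evaluate the right-hand side, and this value was already obtained while proving Theorem \ref{The7}: there one finds the isomorphism $S_{n,2}/(I(\mathcal{C}_{n,2}):x_n) \cong S_{n-3,2}/I(\mathcal{P}_{n-3,2})[x_n]$. Feeding this through the Stanley-depth assertion of Lemma \ref{The5}, which yields $\sdepth(S_{n-3,2}/I(\mathcal{P}_{n-3,2})) = \lceil\frac{n-3}{3}\rceil$, and then through Lemma \ref{111} to account for the adjoined variable $x_n$, I obtain
$$\sdepth(S_{n,2}/(I(\mathcal{C}_{n,2}):x_n)) = \Big\lceil\tfrac{n-3}{3}\Big\rceil + 1 = \Big\lceil\tfrac{n}{3}\Big\rceil,$$
and combining with the previous inequality closes the upper bound.

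The only place asking for care is the low end of the range, where the clean isomorphism above was derived under the hypothesis $n \ge 5$. The cases $n = 3,4$ must therefore be handled by hand, but they are harmless: for these values one computes the colon ideal $(I(\mathcal{C}_{n,2}):x_n)$ directly (it collapses to $K[x_n]$ when $n=3$ and to $(K[x_2,y_2]/(x_2y_2))[x_4]$ when $n=4$), reads off Stanley depth $1$ and $2$ respectively, and checks that these equal $\lceil\frac{n}{3}\rceil$. I do not expect any genuine obstacle here: the corollary is essentially a repackaging of the data already assembled for Theorem \ref{The7}, the one new ingredient being the monotonicity of Stanley depth under colon by a variable, which is supplied ready-made by \cite[Proposition 2.7]{MC}.
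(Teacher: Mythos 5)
Your proposal is correct and takes essentially the same route as the paper: the lower bound is read off from Theorem \ref{The7}, and the upper bound comes from the colon inequality of \cite[Proposition 2.7]{MC} applied to $x_n$, combined with the isomorphism $S_{n,2}/(I(\mathcal{C}_{n,2}):x_n)\cong S_{n-3,2}/I(\mathcal{P}_{n-3,2})[x_n]$ and Lemmas \ref{The5} and \ref{111}. The only difference is cosmetic: the paper settles $n=3$ by citing the squarefree Veronese result \cite[Theorem 1.1]{MC8} and runs the colon argument for all $n\geq 4$, while you verify $n=3,4$ by direct computation of the colon ideal, obtaining the same values.
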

\begin{proof}
$I(\mathcal{C}_{3,2})$ is a square free Veronese ideal, by  \cite[Theorem 1.1]{MC8} $\sdepth(S_{n,2}/I(\mathcal{C}_{n,2}))=1$. Let $n\geq 4$, by \cite[Proposition 2.7]{MC} $\sdepth(S_{n,2}/I(\mathcal{C}_{n,2}))\leq\sdepth(S_{n,2}/(I(\mathcal{C}_{n,2}):x_{n})).$ Since $S_{n,2}/(I(\mathcal{C}_{n,2}):x_{n})\cong S_{n-3,2}/I(\mathcal{P}_{n-3,2})[x_{n}].$
Using Lemmas \ref{The5} and \ref{111} $\sdepth(S_{n,2}/(I(\mathcal{C}_{n,2}):x_{n}))=\lceil\frac{n-3}{3}\rceil+1=\lceil\frac{n}{3}\rceil.$
\end{proof}
For $n\geq 2$ we define a supergraph of $\mathcal{P}_{n,3}$ denoted by $\mathcal{P}^{\star}_{n,3}$ with the set of vertices $V(\mathcal{P}^{\star}_{n,3}):=V(\mathcal{P}_{n,3})\cup\{z_{n+1}\}$ and edge set  $E(\mathcal{P}^{\star}_{n,3}):=E(\mathcal{P}_{n,3})\cup \{z_nz_{n+1},y_nz_{n+1}\}$. Also we define a supergraph of $\mathcal{P}^{\star}_{n,3}$ denoted by $\mathcal{P}^{\star\star}_{n,3}$ with the set of vertices $V(\mathcal{P}^{\star\star}_{n,3}):=V(\mathcal{P}^{\star}_{n,3})\cup\{z_{n+2}\}$ and edge set $E(\mathcal{P}^{\star\star}_{n,3}):=E(\mathcal{P}^{\star\star}_{n,3})\cup \{z_1z_{n+2}, y_1z_{n+2}\}$. For examples of $\mathcal{P}^{\star}_{n,m}$ and $\mathcal{P}^{\star\star}_{n,m}$ see Fig. \ref{A1}. Let $S^{\star}_{n,3}:=S_{n,3}[z_{n+1}]$ and $S^{\star\star}_{n,3}:=S_{n,3}[z_{n+1},z_{n+2}]$ then we have the following lemmas:
\begin{figure}[h!]
\centering
 \includegraphics[width=8cm]{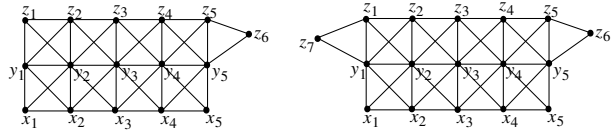}\\
 \caption {From left to right; $\mathcal{P}^{\star}_{5,3}$ and $\mathcal{P}^{\star\star}_{5,3}$.}\label{A1}
\end{figure}
\begin{Lemma}\label{Th32}
Let $n\geq 2$, then $\depth(S_{n,3}^{\star}/I(\mathcal{P}^{\star}_{n,3}))=\sdepth(S_{n,3}^{\star}/I(\mathcal{P}^{\star}_{n,3}))=\lceil\frac{n+1}{3}\rceil$.
\end{Lemma}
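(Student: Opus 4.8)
The plan is to trap the common value $\lceil\frac{n+1}{3}\rceil$ between a lower bound coming from the diameter and an upper bound coming from a colon reduction, exactly as in the proofs of Lemmas~\ref{The5} and \ref{Th31}. The only genuinely new feature is that attaching the vertex $z_{n+1}$ raises the relevant diameter by one, which is what shifts the answer from $\lceil\frac{n}{3}\rceil$ to $\lceil\frac{n+1}{3}\rceil$.

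First I would pin down the lower bound via the diameter. The added vertex $z_{n+1}$ is adjacent only to $y_n$ and $z_n$, so it sits at distance $1$ from column $n$; since the vertices of column $1$ lie at distance $n-1$ from column $n$ inside $\mathcal{P}_{n,3}$, the pair $(x_1,z_{n+1})$ realizes $\d(x_1,z_{n+1})=n$, and no pair is farther apart, so $\diam(\mathcal{P}^{\star}_{n,3})=n$. Feeding this into \cite[Theorem 3.1]{FM} gives $\depth(S_{n,3}^{\star}/I(\mathcal{P}^{\star}_{n,3}))\geq\lceil\frac{n+1}{3}\rceil$, and into \cite[Theorem 4.18]{FM} gives the same lower bound for $\sdepth$.

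For the upper bound I would induct on $n$ in steps of three, the base cases being $n=2,3,4$. Since $y_2\notin I(\mathcal{P}^{\star}_{n,3})$, \cite[Corollary 1.3]{AR1} yields $\depth(S_{n,3}^{\star}/I(\mathcal{P}^{\star}_{n,3}))\leq\depth(S_{n,3}^{\star}/(I(\mathcal{P}^{\star}_{n,3}):y_2))$. For $n\geq 5$ the neighbours of $y_2$ are exactly the eight vertices $x_1,x_2,x_3,y_1,y_3,z_1,z_2,z_3$ filling out the first three columns, while $z_{n+1}$ is far away; hence coloning by $y_2$ converts precisely these eight into the linear generators of $(I(\mathcal{P}^{\star}_{n,3}):y_2)$ and leaves the induced graph on columns $4,\dots,n$ together with the star vertex $z_{n+1}$, which after relabelling is $\mathcal{P}^{\star}_{n-3,3}$. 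Thus $S_{n,3}^{\star}/(I(\mathcal{P}^{\star}_{n,3}):y_2)\cong (S_{n-3,3}^{\star}/I(\mathcal{P}^{\star}_{n-3,3}))[y_2]$, and by the inductive hypothesis together with Lemma~\ref{111} its depth is $\lceil\frac{n-2}{3}\rceil+1=\lceil\frac{n+1}{3}\rceil$, closing the squeeze. The Stanley-depth version runs identically, replacing \cite[Corollary 1.3]{AR1} by \cite[Proposition 2.7]{MC}.

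It remains to dispose of the base cases, and this is where the bookkeeping is most delicate. For $n=2$ the vertex $y_2$ is adjacent to every other vertex, so $(I(\mathcal{P}^{\star}_{2,3}):y_2)$ is the ideal generated by all remaining variables and the quotient is $K[y_2]$, giving $\depth=\sdepth=1=\lceil\frac{3}{3}\rceil$. For $n=3$ the same colon leaves only the two non-adjacent vertices $y_2$ and $z_4$, so the quotient is $K[y_2,z_4]$ and the value is $2=\lceil\frac{4}{3}\rceil$. For $n=4$ the colon by $y_2$ strips off columns $1,2,3$ and produces $K[y_2]$ tensored with the quotient of the four-vertex graph on $\{x_4,y_4,z_4,z_5\}$, namely the triangle $y_4z_4z_5$ with a pendant edge $x_4y_4$; a single further colon by $y_4$ shows this quotient has depth and Stanley depth at most $1$, so $\depth$ and $\sdepth$ of $S_{4,3}^{\star}/(I(\mathcal{P}^{\star}_{4,3}):y_2)$ are at most $2=\lceil\frac{5}{3}\rceil$. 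The main obstacle throughout is the purely combinatorial verification that coloning by $y_2$ excises exactly the first three columns while preserving the star configuration at the opposite end; once this identification is secured, the arithmetic and the appeals to the quoted results are routine.
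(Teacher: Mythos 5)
Your proof is correct, but it takes a different route to the upper bound than the paper does. The paper's proof colons by $y_n$, the vertex adjacent to the attached star vertex $z_{n+1}$: since $y_n$ is adjacent to $z_{n+1}$ and to all of columns $n-1$ and $n$, the colon $(I(\mathcal{P}^{\star}_{n,3}):y_n)$ absorbs the star vertex entirely and leaves $S_{n,3}^{\star}/(I(\mathcal{P}^{\star}_{n,3}):y_{n})\cong (S_{n-2,3}/I(\mathcal{P}_{n-2,3}))[y_n]$, so the upper bound $\lceil\frac{n-2}{3}\rceil+1=\lceil\frac{n+1}{3}\rceil$ follows at once from the already-established unstarred Lemma \ref{Th31} and Lemma \ref{111} --- no induction and only the trivial base case $n=2$ are needed. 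You instead colon by $y_2$ at the far end, which preserves the star configuration and yields $(S_{n-3,3}^{\star}/I(\mathcal{P}^{\star}_{n-3,3}))[y_2]$; this forces an induction inside the starred family itself, with the three base cases $n=2,3,4$ (the $n=4$ case genuinely falls outside the induction since $\mathcal{P}^{\star}_{1,3}$ is not covered by the statement, and your ad hoc treatment of it --- the triangle $y_4z_4z_5$ with pendant edge $x_4y_4$, followed by a second colon at $y_4$ --- is needed and correct). Both arguments are sound and rest on the same two pillars (the diameter lower bound from \cite[Theorems 3.1, 4.18]{FM} and the colon upper bound from \cite[Corollary 1.3]{AR1} and \cite[Proposition 2.7]{MC}); what the paper's choice of colon vertex buys is brevity, since killing the star vertex reduces the starred problem to the unstarred one already solved, whereas your choice buys independence from Lemma \ref{Th31} at the price of an induction and extra base-case bookkeeping.
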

\begin{proof}
First we prove the result for depth.  Since $\diam(\mathcal{P}^{\star}_{n,3})=n$, then by \cite[Theorem 3.1]{FM} we have $\depth(S_{n,3}^{\star}/I(\mathcal{P}^{\star}_{n,3}))\geq\lceil\frac{n+1}{3}\rceil$.
Now we prove the reverse inequality, if $n=2$ then the result is trivial. Let $n\geq 3$, since $y_{n}\notin I(\mathcal{P}^{\star}_{n,3})$ so by \cite[Corollary 1.3]{AR1} $\depth(S_{n,3}^{\star}/I(\mathcal{P}^{\star}_{n,3}))\leq \depth(S_{n,3}^{\star}/(I(\mathcal{P}^{\star}_{n,3}):y_{n})).$
We have
$S_{n,3}^{\star}/(I(\mathcal{P}^{\star}_{n,3}):y_{n})\cong  (S_{n-2,3}/I(\mathcal{P}_{n-2,3}))[y_n].$
By Lemmas \ref{Th31} and \ref{111} $\depth(S_{n,3}^{\star}/(I(\mathcal{P}^{\star}_{n,3}):y_{n}))= \lceil\frac{n-2}{3}\rceil+1=\lceil\frac{n+1}{3}\rceil.$
Thus $\depth(S_{n,3}^{\star}/I(\mathcal{P}^{\star}_{n,3}))\leq\lceil\frac{n+1}{3}\rceil$. Proof for Stanley depth is similar using \cite[Proposition 2.7]{MC} and \cite[Theorem 4.18]{FM}.
\end{proof}

\begin{Lemma}\label{le33}
Let $n\geq 2$, then $\depth(S_{n,3}^{\star\star}/I(\mathcal{P}^{\star\star}_{n,3}))=\sdepth(S_{n,3}^{\star\star}/I(\mathcal{P}^{\star\star}_{n,3}))=\lceil\frac{n+2}{3}\rceil$.
\end{Lemma}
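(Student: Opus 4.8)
The plan is to follow the same two-sided strategy already used for Lemmas \ref{Th31} and \ref{Th32}: get the lower bound from the diameter, and match it with an upper bound obtained by passing to a colon ideal that collapses $\mathcal{P}^{\star\star}_{n,3}$ onto the already-settled single-star graph $\mathcal{P}^{\star}_{n-2,3}$.

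For the lower bound, I would first record that $\diam(\mathcal{P}^{\star\star}_{n,3})=n+1$. Using the distance formula for the strong product, $\mathcal{P}_{n,3}$ has diameter $n-1$; attaching the pendant vertex $z_{n+1}$ at the right end and $z_{n+2}$ at the left end adds one to each side, so these two pendant vertices realise a geodesic of length $1+(n-1)+1=n+1$ while no pair is farther apart. Then \cite[Theorem 3.1]{FM} yields $\depth(S^{\star\star}_{n,3}/I(\mathcal{P}^{\star\star}_{n,3}))\geq\lceil\frac{n+2}{3}\rceil$, and its Stanley-depth analogue \cite[Theorem 4.18]{FM} gives the same lower bound for $\sdepth$.

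For the upper bound I would localise at $y_n$, which is not in $I(\mathcal{P}^{\star\star}_{n,3})$. By \cite[Corollary 1.3]{AR1}, $\depth(S^{\star\star}_{n,3}/I(\mathcal{P}^{\star\star}_{n,3}))\leq\depth(S^{\star\star}_{n,3}/(I(\mathcal{P}^{\star\star}_{n,3}):y_n))$. The colon is formed by adjoining the neighbours of $y_n$, namely $x_{n-1},x_n,y_{n-1},z_{n-1},z_n,z_{n+1}$, as new linear generators; deleting $y_n$ together with these six vertices leaves the columns $1,\dots,n-2$ of $\mathcal{P}_{n,3}$ with the pendant $z_{n+2}$ still attached at the left end, which after relabelling is exactly $\mathcal{P}^{\star}_{n-2,3}$. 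Hence $S^{\star\star}_{n,3}/(I(\mathcal{P}^{\star\star}_{n,3}):y_n)\cong (S^{\star}_{n-2,3}/I(\mathcal{P}^{\star}_{n-2,3}))[y_n]$. Applying Lemma \ref{Th32} and then Lemma \ref{111} gives $\lceil\frac{n-1}{3}\rceil+1=\lceil\frac{n+2}{3}\rceil$, so $\depth\leq\lceil\frac{n+2}{3}\rceil$ and equality follows. The Stanley-depth upper bound is identical, replacing \cite[Corollary 1.3]{AR1} by \cite[Proposition 2.7]{MC} and using the $\sdepth$ parts of Lemmas \ref{Th32} and \ref{111}.

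The main obstacle is the base of the reduction: Lemma \ref{Th32} is stated only for $n\geq2$, so the colon identification with $\mathcal{P}^{\star}_{n-2,3}$ is directly usable only when $n\geq4$, and the cases $n=2,3$ must be treated separately. For $n=2$ the colon at $y_2$ annihilates every vertex except the now-isolated pendant $z_{n+2}$, so the quotient is a polynomial ring of depth and Stanley depth $2=\lceil\frac{4}{3}\rceil$. For $n=3$ the same colon leaves $\mathcal{P}^{\star}_{1,3}$ (a triangle with one pendant edge), for which a direct short exact sequence in the pendant variable shows $\depth=\sdepth=1$, giving $1+1=2=\lceil\frac{5}{3}\rceil$ after adjoining the free variable $y_3$. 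One should also verify that $z_{n+2}$ is genuinely retained by the colon, which holds because $z_{n+2}$ is adjacent only to $y_1,z_1$ and hence is not a neighbour of $y_n$ once $n\geq2$; this is precisely what makes the reduction land on $\mathcal{P}^{\star}_{n-2,3}$ rather than on $\mathcal{P}_{n-2,3}$.
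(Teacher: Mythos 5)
Your proposal is correct and follows essentially the same route as the paper's own proof: the lower bound from $\diam(\mathcal{P}^{\star\star}_{n,3})=n+1$ via \cite[Theorems 3.1, 4.18]{FM}, and the upper bound by taking the colon with $y_n$ and identifying $S^{\star\star}_{n,3}/(I(\mathcal{P}^{\star\star}_{n,3}):y_n)$ with $(S^{\star}_{n-2,3}/I(\mathcal{P}^{\star}_{n-2,3}))[y_n]$, then applying Lemmas \ref{Th32} and \ref{111} (resp. \cite[Proposition 2.7]{MC} for Stanley depth). Your explicit treatment of the base cases $n=2,3$, which the paper simply declares true, and your check that $z_{n+2}$ survives the colon are welcome additional detail but do not change the argument.
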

\begin{proof}
Clearly $\diam(\mathcal{P}^{\star\star}_{n,3})=n+1$, then by \cite[Theorem 3.1]{FM} we have $\depth(S_{n,3}^{\star\star}/I(\mathcal{P}^{\star\star}_{n,3}))\geq\lceil\frac{n+2}{3}\rceil$.
Now we prove the reverse inequality, the inequality is true when $n=2,3$. Let $n\geq 4$, as $y_{n}\notin I(\mathcal{P}^{\star\star}_{n,3})$ so by \cite[Corollary 1.3]{AR1} $\depth(S_{n,3}^{\star\star}/I(\mathcal{P}^{\star\star}_{n,3}))\leq \depth(S_{n,3}^{\star\star}/(I(\mathcal{P}^{\star\star}_{n,3}):y_{n})).$
Since $S_{n,3}^{\star\star}/(I(\mathcal{P}^{\star\star}_{n,3}):y_{n})\cong  (S^{\star}_{n-2,3}/I(\mathcal{P}^{\star}_{n-2,3}))[y_n].$
By Lemmas \ref{Th32} and \ref{111} we obtain $\depth(S_{n,3}^{\star}/I(\mathcal{P}^{\star}_{n,3}):y_{n})= \lceil\frac{n-2+1}{3}\rceil+1=\lceil\frac{n+2}{3}\rceil.$
Thus $\depth(S_{n,3}^{\star\star}/I(\mathcal{P}^{\star\star}_{n,3}))\leq\lceil\frac{n+2}{3}\rceil$.
Similarly one can prove the result for Stanley depth by using \cite[Proposition 2.7]{MC} and \cite[Theorem 4.18]{FM}.
\end{proof}
\begin{Theorem}\label{The4}
Let $n\geq 3$, if $n\equiv 0,2\,(\mod 3)$, then $\depth(S_{n,3}/I(\mathcal{C}_{n,3}))=\sdepth(S_{n,3}/I(\mathcal{C}_{n,3}))=\lceil\frac{n-1}{3}\rceil,$ and if $n\equiv 1\,(\mod 3)$, then $\lceil\frac{n-1}{3}\rceil\leq\depth(S_{n,3}/I(\mathcal{C}_{n,3})),\sdepth(S_{n,3}/I(\mathcal{C}_{n,3}))\leq \lceil\frac{n}{3}\rceil.$
\end{Theorem}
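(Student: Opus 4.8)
The plan is to mirror the proof of Theorem \ref{The7}, but with more cutting to do, because the module now carries three cyclic ``rows'' $x,y,z$ instead of two. I would attack the two bounds separately. For the upper bound, the crucial point is that it is the \emph{middle}-row variable that cleanly collapses an entire column: the vertex $y_n$ is adjacent in $\mathcal{C}_{n,3}$ to $x_1,y_1,z_1$, to $x_{n-1},y_{n-1},z_{n-1}$ and to $x_n,z_n$, so $(I(\mathcal{C}_{n,3}):y_n)$ contains all eight of these variables. Killing them leaves only the columns $2,\dots,n-2$, giving
\begin{equation*}
S_{n,3}/(I(\mathcal{C}_{n,3}):y_n)\cong \big(S_{n-3,3}/I(\mathcal{P}_{n-3,3})\big)[y_n].
\end{equation*}
By Lemmas \ref{Th31} and \ref{111} this module has depth and Stanley depth $\lceil\frac{n-3}{3}\rceil+1=\lceil\frac{n}{3}\rceil$. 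Since $y_n$ is a variable not in $I(\mathcal{C}_{n,3})$, \cite[Corollary 1.3]{AR1} and \cite[Proposition 2.7]{MC} give $\depth(S_{n,3}/I(\mathcal{C}_{n,3}))\le\lceil\frac{n}{3}\rceil$ and the same for Stanley depth. When $n\equiv0,2\,(\mod 3)$ we have $\lceil\frac n3\rceil=\lceil\frac{n-1}3\rceil$, so this is already the sharp bound $\lceil\frac{n-1}{3}\rceil$; when $n\equiv1\,(\mod 3)$ it is the claimed bound $\lceil\frac n3\rceil$. Note that colon by a corner variable such as $x_n$ would \emph{not} suffice here, because $x_n$ is adjacent to no $z$-vertex and the whole $z$-cycle would survive, making the colon depth too large.

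For the matching lower bound $\depth,\sdepth\ge\lceil\frac{n-1}{3}\rceil$ I would run the short exact sequence in $x_n$,
\begin{equation*}
0\longrightarrow S_{n,3}/(I(\mathcal{C}_{n,3}):x_{n})\xrightarrow{\cdot x_{n}} S_{n,3}/I(\mathcal{C}_{n,3})\longrightarrow S_{n,3}/(I(\mathcal{C}_{n,3}),x_{n})\longrightarrow 0,
\end{equation*}
and bound each outer term below by $\lceil\frac{n-1}{3}\rceil$. In the colon module $x_1,y_1,x_{n-1},y_{n-1},y_n$ die but the $z$-cycle stays intact, so I would open it with a second sequence in $z_n$: the quotient by $z_n$ deletes that vertex and exhibits the double-star module of Lemma \ref{le33} (columns $2,\dots,n-2$ together with the two surviving end-vertices $z_1,z_{n-1}$), while the colon by $z_n$ collapses to $\mathcal{P}_{n-3,3}$ with two free variables adjoined; both have depth at least $\lceil\frac{n-1}{3}\rceil+1$. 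In the quotient module $S_{n,3}/(I(\mathcal{C}_{n,3}),x_{n})$ the $y$- and $z$-cycles persist, so I would peel them off by successive sequences in $y_n$ and then $z_n$; quotienting all three out deletes column $n$ entirely and yields $S_{n-1,3}/I(\mathcal{P}_{n-1,3})$, of depth $\lceil\frac{n-1}{3}\rceil$ by Lemma \ref{Th31}, while the intervening colon terms produce the path and star-path modules of Lemmas \ref{Th31}, \ref{Th32} and \ref{le33}. Applying the Depth Lemma (Lemma \ref{le01}) up this chain gives $\depth(S_{n,3}/(I(\mathcal{C}_{n,3}),x_{n}))\ge\lceil\frac{n-1}{3}\rceil$, and the first sequence then gives $\depth(S_{n,3}/I(\mathcal{C}_{n,3}))\ge\min\{\lceil\frac n3\rceil,\lceil\frac{n-1}{3}\rceil\}=\lceil\frac{n-1}{3}\rceil$. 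The Stanley-depth lower bound is the verbatim chain with Lemma \ref{le1} replacing the Depth Lemma, and the small cases $n=3,4$ are checked directly.

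Putting the two halves together finishes the proof: for $n\equiv0,2\,(\mod 3)$ the upper and lower bounds both equal $\lceil\frac{n-1}{3}\rceil$, giving the stated equalities, and for $n\equiv1\,(\mod 3)$ we are left exactly with $\lceil\frac{n-1}{3}\rceil\le\depth(S_{n,3}/I(\mathcal{C}_{n,3})),\sdepth(S_{n,3}/I(\mathcal{C}_{n,3}))\le\lceil\frac{n}{3}\rceil$. The main obstacle I expect is purely combinatorial and lives in the cutting step: because deleting or inverting a single variable severs at most one of the three cycles, every branch of the reduction must dispose of a leftover $z$-cycle (or, after relabeling, an $x$-cycle), and this is precisely why the auxiliary supergraphs $\mathcal{P}^{\star}_{n,3}$ and $\mathcal{P}^{\star\star}_{n,3}$ were introduced. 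The delicate point is to verify that each colon or quotient really is isomorphic, after relabeling vertices and absorbing free variables through Lemma \ref{111}, to one of the path or star-path modules of Lemmas \ref{The5}, \ref{Th31}, \ref{Th32}, \ref{le33}, and that the residues of $n$ modulo $3$ line the ceilings up so that every minimum produced by the Depth Lemma lands on $\lceil\frac{n-1}{3}\rceil$; once these isomorphisms are pinned down, the homological bookkeeping is routine.
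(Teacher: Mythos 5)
Your proposal is correct and follows essentially the same route as the paper's own proof: the same upper bound via the colon by $y_n$ (which collapses to $(S_{n-3,3}/I(\mathcal{P}_{n-3,3}))[y_n]$ of depth and Stanley depth $\lceil\frac{n}{3}\rceil$), and the same lower bound via the short exact sequence in $x_n$, with the colon branch split along $z_n$ and the quotient branch split along $y_n$ and then $z_n$, so that every outer term is one of the modules of Lemmas \ref{Th31} and \ref{le33}. The only cosmetic difference is that you list Lemma \ref{Th32} among the intervening terms, whereas the paper's chain only needs the path modules of Lemma \ref{Th31} and the double-star modules of Lemma \ref{le33}.
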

\begin{proof}
We first prove the result for depth. For $n=3,4$ the result is clear. Let $n\geq5$, consider the short exact sequence
\begin{equation}\label{es41}
0\longrightarrow S_{n,3}/(I(\mathcal{C}_{n,3}):x_{n})\xrightarrow{\cdot x_{n}} S_{n,3}/I(\mathcal{C}_{n,3})\longrightarrow S_{n,3}/(I(\mathcal{C}_{n,3}),x_{n})\longrightarrow 0,
\end{equation}
\begin{multline*}
\text{Let\,\,}A:=(I(\mathcal{C}_{n,3}):x_{n})=\big(\cup^{n-3}_{i=2}\{x_{i}y_{i},x_{i}y_{i+1},x_{i}x_{i+1},x_{i+1}y_{i},y_{i}y_{i+1},y_{i}z_{i},y_{i}z_{i+1},y_{i+1}z_{i},
z_{i}z_{i+1}\},\\x_{n-2}y_{n-2},y_{n-2}z_{n-2},x_{1},y_{1},x_{n-1}
,y_{n-1},y_{n},z_{n}z_{n-1},z_{n-1}z_{n-2},y_{n-2}z_{n-1},z_{n}z_{1},z_{1}z_{2},y_{2}z_{1}\big),
\end{multline*}
and consider the following exact sequence
\begin{equation}\label{es42}
0\longrightarrow S_{n,3}/(A:z_{n})\xrightarrow{\cdot z_{n}} S_{n,3}/A\longrightarrow S_{n,3}/(A,z_{n})\longrightarrow 0,
\end{equation}
\begin{multline*}
(A,z_{n})=\big(\cup^{n-3}_{i=2}\{x_{i}y_{i},x_{i}y_{i+1},x_{i}x_{i+1},x_{i+1}y_{i},y_{i}y_{i+1},y_{i}z_{i},y_{i}z_{i+1},y_{i+1}z_{i},z_{i}z_{i+1}\},
x_{n-2}y_{n-2},\\y_{n-2}z_{n-2},x_{1},y_{1},x_{n-1},y_{n-1},y_{n},z_{n},z_{n-1}z_{n-2},y_{n-2}z_{n-1},z_{1}z_{2},y_{2}z_{1}\big).
\end{multline*}
After renumbering the variables, we have
$S_{n,3}/(A,z_{n})\cong (S_{n-3,3}^{\star\star}/I(\mathcal{P}^{\star\star}_{n-3,3}))[x_{n}].$ Thus by Lemmas \ref{le33} and \ref{111}
$\depth(S_{n,3}/(A,z_{n}))=\lceil\frac{n-3+2}{3}\rceil+1=\lceil\frac{n-1}{3}\rceil+1.$
Also
\begin{multline*}
(A:z_{n})=\big(\cup^{n-3}_{i=2}\{x_{i}y_{i},x_{i}y_{i+1},x_{i}x_{i+1},x_{i+1}y_{i},y_{i}y_{i+1},y_{i}z_{i},y_{i}z_{i+1},y_{i+1}z_{i},z_{i}z_{i+1}\},
x_{n-2}y_{n-2},\\y_{n-2}z_{n-2},x_{1},y_{1},x_{n-1},y_{n-1},y_{n},z_{n-1},z_{1}\big).
\end{multline*}
After renumbering the variables, we get
$S_{n,3}/(A:z_{n})\cong (S_{n-3,3}/I(\mathcal{P}_{n-3,3}))[x_{n},z_{n}].$ Thus by Lemmas \ref{Th31} and \ref{111}
$\depth(S_{n,3}/(A:z_{n}))=\lceil\frac{n-3}{3}\rceil+2=\lceil\frac{n}{3}\rceil+1.$
Now let
 \begin{multline*}
\overline{A}:=(I(\mathcal{C}_{n,3}),x_{n})=\big(\cup^{n-2}_{i=1}\{x_{i}y_{i},x_{i}y_{i+1},x_{i}x_{i+1},x_{i+1}y_{i},y_{i}y_{i+1},y_{i}z_{i},y_{i}z_{i+1},
y_{i+1}z_{i},z_{i}z_{i+1}\},\\x_{n-1}y_{n-1},y_{n-1}z_{n-1},x_{n},x_{n-1}y_{n},y_{n-1}y_{n},y_{n}z_{n-1},y_{n-1}z_{n},z_{n-1}z_{n},y_{n}z_{n},y_{1}y_{n},
x_{1}y_{n},y_{1}z_{n},y_{n}z_{1},z_{1}z_{n}\big)\\=(I(\mathcal{P}_{n-1,3}),x_{n},x_{n-1}y_{n},y_{n-1}y_{n},y_{n}z_{n-1},y_{n-1}z_{n},z_{n-1}z_{n},
y_{n}z_{n},y_{1}y_{n},x_{1}y_{n},y_{1}z_{n},y_{n}z_{1},z_{1}z_{n}),
\end{multline*}
and the following exact sequence
\begin{equation}\label{es43}
0\longrightarrow S_{n,3}/(\overline{A}:y_{n})\xrightarrow{\cdot y_{n}} S_{n,3}/\overline{A}\longrightarrow S_{n,3}/(\overline{A},y_{n})\longrightarrow 0,
\end{equation}
\begin{multline*}
\text{As\,\,}(\overline{A}:y_{n})=\big(\cup^{n-3}_{i=2}\{x_{i}y_{i},x_{i}y_{i+1},x_{i}x_{i+1},x_{i+1}y_{i},y_{i}y_{i+1},y_{i}z_{i},y_{i}z_{i+1},y_{i+1}z_{i},z_{i}z_{i+1}\},x_{n-2}y_{n-2},\\y_{n-2}z_{n-2},x_{n}
,x_{1},y_{1},z_{1},x_{n-1},y_{n-1},z_{n-1},z_{n}\big).
\end{multline*}
After renumbering the variables, we get
$S_{n,3}/(\overline{A}:y_{n})\cong S_{n-3,3}/I(\mathcal{P}_{n-3,3})[y_{n}].$ Therefore by Lemmas \ref{Th31} and \ref{111}
$\depth(S_{n,3}/(\overline{A}:y_{n}))=\lceil\frac{n-3}{3}\rceil+1=\lceil\frac{n}{3}\rceil.$
Now let
$${\widehat{A}}:=(\overline{A},y_{n})=(I(\mathcal{P}_{n-1,3}),x_{n},y_{n},y_{n-1}z_{n},z_{n-1}z_{n},y_{1}z_{n},z_{1}z_{n}),$$
and the following short exact sequence
\begin{equation}\label{es44}
0\longrightarrow S_{n,3}/({\widehat{A}}:z_{n})\xrightarrow{\cdot z_{n}} S_{n,3}/{\widehat{A}}\longrightarrow S_{n,3}/({\widehat{A}},z_{n})\longrightarrow 0,
\end{equation}
thus
$S_{n,3}/({\widehat{A}},z_{n})\cong S_{n-1,3}/I(\mathcal{P}_{n-1,3}).$ Therefore by Lemma \ref{Th31} $\depth(S_{n,3}/({\widehat{A}},z_{n}))=\lceil\frac{n-1}{3}\rceil.$
Also
\begin{multline*}
({\widehat{A}}:z_{n})=\big(\cup^{n-3}_{i=2}\{x_{i}y_{i},x_{i}y_{i+1},x_{i}x_{i+1},x_{i+1}y_{i},y_{i}y_{i+1},y_{i}z_{i},y_{i}z_{i+1},y_{i+1}z_{i},z_{i}z_{i+1}\},
x_{n-2}y_{n-2},\\y_{n-2}z_{n-2},z_{1},y_{1},z_{n-1},y_{n-1},y_{n},x_{n},x_{n-1}x_{n-2},x_{n-1}y_{n-2},x_{1}x_{2},x_{1}y_{2}\big).
\end{multline*}
After renumbering the variables, we have
$S_{n,3}/({\widehat{A}}:z_{n})\cong (S_{n-3,3}^{\star\star}/I(\mathcal{P}^{\star\star}_{n-3,3}))[z_{n}].$ Thus by Lemmas \ref{le33} and \ref{111}
$\depth(S_{n,3}/({\widehat{A}}:z_{n}))=\lceil\frac{n-3+2}{3}\rceil+1=\lceil\frac{n-1}{3}\rceil+1.$
By applying Depth Lemma on the exact sequences (\ref{es41}), (\ref{es42}), (\ref{es43}) and (\ref{es44}) we obtain $\depth(S_{n,3}/I(\mathcal{C}_{n,3}))\geq\lceil\frac{n-1}{3}\rceil$. For upper bound, by \cite[Corollary 1.3]{AR1} $\depth(S_{n,3}/I(\mathcal{C}_{n,3}))\leq \depth(S_{n,3}/(I(\mathcal{C}_{n,3}):y_{n})).$ Since $(S_{n,3}/(I(\mathcal{C}_{n,3}):y_{n}))\cong (S_{n-3,3}/(I(\mathcal{P}_{n-3,3}))[y_n]$, by Lemmas \ref{Th31} and \ref{111}  $\depth(S_{n,3}/I(\mathcal{C}_{n,3}))\leq\lceil\frac{n}{3}\rceil$,
if $n\equiv 0(\mod 3)$ or $n\equiv 2(\mod 3)$ then $\lceil\frac{n-1}{3}\rceil=\lceil\frac{n}{3}\rceil$.
If $n\equiv 1(\mod 3)$ then $\lceil\frac{n-1}{3}\rceil\leq\depth(S_{n,3}/I(\mathcal{C}_{n,3}))\leq \lceil\frac{n}{3}\rceil.$ Proof for Stanley depth is similar using Lemma \ref{le1} and \cite[Proposition 2.7]{MC}.
\end{proof}
\begin{Example}
{\em One can expect that $\depth(S_{n,3}/I(\mathcal{C}_{n,3}))=\lceil\frac{n-1}{3}\rceil$ as we have in \cite[Proposition 1.3]{MC4} and Theorem \ref{The7}. But examples show that in the essential case when $n\equiv 1(\mod 3)$ the upper bound in Theorem \ref{The4} is reached. For instance, when $n=4$, then $\depth(S_{4,3}/I(\mathcal{C}_{4,3}))=2=\lceil\frac{4}{3}\rceil.$}
\end{Example}
\section[Lower bounds for Stanley depth of $I(\mathcal{P}_{n,m})$ and $I(\mathcal{C}_{n,m})$ when $m\leq 3$]{Lower bounds for Stanley depth of $I(\mathcal{P}_{n,m})$ and $I(\mathcal{C}_{n,m})$ when $1\leq m\leq 3$}
In this section, we give some lower bounds for Stanley depth of $I(\mathcal{P}_{n,m})$ and $I(\mathcal{C}_{n,m})$, when $m\leq 3$. These bounds together with the results of previous section allow us to give a positive answer to the conjecture \ref{C1}. We begin this section with the following useful lemma:
\begin{Lemma}\label{lecon}
Let $A$ and $B$ be two disjoint sets of variables, $I_1\subset K[A]$ and $I_2\subset K[B]$ be square free monomial ideals such that $\sdepth_{K[A]}(I_1)> \sdepth(K[A]/I_1)$. Then $$\sdepth_{K[A\cup B]}(I_1+I_2)\geq \sdepth(K[A]/I_1)+\sdepth_{K[B]}(I_2).$$
\begin{proof}
Proof follows by \cite[Theorem 1.3]{MC}.
\end{proof}
\end{Lemma}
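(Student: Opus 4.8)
The plan is to exhibit an explicit Stanley decomposition of $I_1+I_2$ obtained by splitting it, as a $\ZZ^n$-graded $K$-vector space, according to the two disjoint sets of variables. Writing $S_1:=K[A]$, $S_2:=K[B]$ and $S:=K[A\cup B]$, every monomial of $S$ factors uniquely as $uv$ with $u$ a monomial in $A$ and $v$ a monomial in $B$, and such a monomial lies in the monomial ideal $I_1+I_2$ exactly when $u\in I_1$ or $v\in I_2$ (a generator of $I_1$ uses only $A$-variables, so it divides $uv$ iff it divides $u$, and symmetrically for $I_2$). Sorting these monomials by whether $u\in I_1$ yields the identity
$$I_1+I_2 \;=\; I_1S \;\dirsum\; \big((S_1/I_1)\tensor_K I_2\big)$$
of $\ZZ^n$-graded $K$-vector spaces, where $I_1S$ collects the monomials with $u\in I_1$ (and $v$ arbitrary) while the second summand collects those with $u\notin I_1$ and $v\in I_2$. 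I would verify this splitting first, since everything rests on it.

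Next I would bound the Stanley depth of each summand separately. For $I_1S=I_1\tensor_K K[B]$, tensoring a Stanley decomposition of $I_1$ over $S_1$ with $K[B]$ gives $\sdepth_S(I_1S)\geq \sdepth_{S_1}(I_1)+|B|$, the ideal analogue of Lemma \ref{111}. For $(S_1/I_1)\tensor_K I_2$, I would multiply a Stanley decomposition $S_1/I_1=\dirsum_j u_jK[B_j]$ (with $B_j\subseteq A$) by a Stanley decomposition $I_2=\dirsum_k v_kK[C_k]$ (with $C_k\subseteq B$); since $A$ and $B$ are disjoint, each product $u_jv_kK[B_j\cup C_k]$ is again a Stanley space, of dimension $|B_j|+|C_k|\geq \sdepth(S_1/I_1)+\sdepth_{S_2}(I_2)$, whence $\sdepth_S\big((S_1/I_1)\tensor_K I_2\big)\geq \sdepth(S_1/I_1)+\sdepth_{S_2}(I_2)$. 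This pair of bounds is exactly the content of \cite[Theorem 1.3]{MC}, which I would invoke here rather than reprove.

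Finally I would glue the two decompositions: their union is a Stanley decomposition of $I_1+I_2$, so its Stanley depth is at least the minimum of the two bounds,
$$\sdepth_S(I_1+I_2)\;\geq\;\min\{\sdepth_{S_1}(I_1)+|B|,\;\sdepth(S_1/I_1)+\sdepth_{S_2}(I_2)\}.$$
The hypothesis $\sdepth_{S_1}(I_1)>\sdepth(S_1/I_1)$ is precisely what makes the first entry redundant: since $\sdepth_{S_2}(I_2)\leq |B|$ and $\sdepth_{S_1}(I_1)\geq \sdepth(S_1/I_1)+1$, one has $\sdepth_{S_1}(I_1)+|B|\geq \sdepth(S_1/I_1)+\sdepth_{S_2}(I_2)$, so the minimum equals the second term and the claimed bound follows.

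The technical heart is the tensor-product estimate of the second paragraph, which I would lean on \cite[Theorem 1.3]{MC} to supply; the genuinely subtle point, however, is conceptual rather than computational. Without the hypothesis, collapsing the minimum would require $\sdepth_{S_1}(I_1)\geq\sdepth(S_1/I_1)$, which is an instance of the still-open inequality of Conjecture \ref{C1}. Assuming that inequality for $I_1$ at the outset is exactly what lets the argument sidestep the open problem, and this is why the lemma is stated with the strict comparison as a standing hypothesis.
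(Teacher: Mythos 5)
Your proof is correct and takes essentially the same route as the paper: the paper's entire proof is an appeal to \cite[Theorem 1.3]{MC}, whose content is exactly the splitting $I_1+I_2=I_1K[A\cup B]\,\oplus\,(K[A]/I_1)\otimes_K I_2$ and the two-term minimum $\min\{\sdepth_{K[A]}(I_1)+|B|,\ \sdepth(K[A]/I_1)+\sdepth_{K[B]}(I_2)\}$ that you reconstruct, after which the hypothesis $\sdepth_{K[A]}(I_1)>\sdepth(K[A]/I_1)$ (together with $\sdepth_{K[B]}(I_2)\leq|B|$) collapses the minimum exactly as you argue. You have merely made explicit the decomposition, the two bounds, and the collapse step that the paper leaves to the cited theorem.
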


\begin{Remark}\label{path1}
{\em Since $I(\mathcal{P}_{n,1})\cong I(P_n)$, thus by \cite[Theorem 2.3]{O} and \cite[Prposition 2.1]{PFY} we have
$\sdepth(I(\mathcal{P}_{n,1}))>\sdepth(S_{n,1}/I(\mathcal{P}_{n,1}))=\lceil\frac{n}{3}\rceil.$}
\end{Remark}

\begin{Theorem}\label{th3}
Let $n\geq 1$, then $\sdepth (I(\mathcal{P}_{n,2}))> \sdepth(S_{n,2}/I(\mathcal{P}_{n,2}))=\lceil\frac{n}{3}\rceil.$
\end{Theorem}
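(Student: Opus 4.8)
The equality $\sdepth(S_{n,2}/I(\mathcal{P}_{n,2}))=\lceil n/3\rceil$ is already supplied by Lemma \ref{The5}, so the entire content of the statement is the strict inequality, and it suffices to prove $\sdepth(I(\mathcal{P}_{n,2}))\geq \lceil n/3\rceil+1$. The plan is to induct on $n$. The small cases are disposed of by hand: for $n=1$ the ideal $I(\mathcal{P}_{1,2})=(x_1y_1)$ is principal in two variables, hence a free module of $\sdepth 2>1=\lceil 1/3\rceil$, and the remaining small instances can be checked directly (or with CoCoA). Writing $I=I(\mathcal{P}_{n,2})$ and letting $\widehat{x_n}$ denote all variables except $x_n$, the engine of the induction is the short exact sequence
\[
0\longrightarrow x_n\,(I:x_n)\longrightarrow I\longrightarrow I\cap K[\widehat{x_n}]\longrightarrow 0,
\]
in which $x_n(I:x_n)\cong (I:x_n)$ as graded modules, so Lemma \ref{le1} gives $\sdepth(I)\geq \min\{\sdepth(I:x_n),\,\sdepth(I\cap K[\widehat{x_n}])\}$.

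The colon term is harmless. Since the neighbours of $x_n$ form a separating set, $(I:x_n)=I(\mathcal{P}_{n-2,2})+(x_{n-1},y_{n-1},y_n)$ is a sum of two ideals in disjoint sets of variables, with $x_n$ a free variable. I would apply Lemma \ref{lecon} with $I_1=I(\mathcal{P}_{n-2,2})$, whose required strict hypothesis $\sdepth(I_1)>\sdepth(S_{n-2,2}/I_1)$ is precisely the inductive case, and $I_2=(x_{n-1},y_{n-1},y_n)$, a monomial prime with $\sdepth=\lceil 3/2\rceil=2$; absorbing the free variable $x_n$ then yields $\sdepth(I:x_n)\geq \lceil(n-2)/3\rceil+3\geq \lceil n/3\rceil+2$. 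Thus the binding term is always the deletion ideal $I\cap K[\widehat{x_n}]$.

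To treat the deletion ideal I would peel off the last column. Deleting $x_n$ leaves $I(\mathcal{P}_{n-1,2})$ together with the extra vertex $y_n$ joined to $x_{n-1},y_{n-1}$; running the same sequence once more on $y_n$ bounds $\sdepth(I\cap K[\widehat{x_n}])$ below by $\min\{\sdepth(\text{a colon, bounded as above via Lemma \ref{lecon}}),\ \sdepth(I(\mathcal{P}_{n-1,2}))\}$. When $n\equiv 0,2\,(\mod 3)$ one has $\lceil(n-1)/3\rceil+1=\lceil n/3\rceil+1$, so the inductive bound $\sdepth(I(\mathcal{P}_{n-1,2}))\geq\lceil(n-1)/3\rceil+1$ already closes these residue classes, and $n=2,3$ fall out as the first instances of this reduction.

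The class $n\equiv 1\,(\mod 3)$ is the genuine obstacle, and it is exactly the delicate case flagged by Theorem \ref{The7} and the subsequent Example: there $\lceil(n-1)/3\rceil+1=\lceil n/3\rceil$, so reducing to $\mathcal{P}_{n-1,2}$ loses one unit (already for $n=4$ the crude reduction only gives $\sdepth\geq 2$ while we need $3$). Here I would not reduce to $\mathcal{P}_{n-1,2}$; instead, imitating the finer splitting used for $\depth$ in Theorem \ref{The7}, I would decompose the deletion ideal so that every binding summand becomes a disjoint sum built from $I(\mathcal{P}_{n-3,2})$ (and, where needed, $I(\mathcal{P}_{n-4,2})$) together with monomial primes. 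Lemma \ref{lecon} is then applied with $I_1=I(\mathcal{P}_{n-3,2})$: since it contributes through its quotient Stanley depth $\lceil(n-3)/3\rceil$ plus the Stanley depth $\geq 2$ of the complementary prime factor, the additive bound reads $\lceil(n-3)/3\rceil+2=\lceil n/3\rceil+1$. This is the whole purpose of Lemma \ref{lecon}, and the reason its hypothesis is phrased as a strict inequality fed by Remark \ref{path1} and the inductive step: it converts the known strictness of the smaller instance into the extra $+1$. The hard part is organising this final decomposition so that each binding piece is either such a disjoint sum or an already-settled smaller case; once that is arranged, checking the handful of $\lceil\cdot/3\rceil$ identities modulo $3$ is routine.
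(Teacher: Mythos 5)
Your handling of the base cases and of the colon term is correct (Lemma \ref{lecon} fed by the inductive strict inequality, plus \cite[Theorem 2.2]{BH} and Lemma \ref{111}), and your diagnosis of where the difficulty sits is exactly right: peeling off the last column leaves the binding summand $I(\mathcal{P}_{n-1,2})$, whose inductive bound $\lceil\frac{n-1}{3}\rceil+1$ collapses to $\lceil\frac{n}{3}\rceil$ when $n\equiv 1\pmod 3$. But for precisely that residue class your proposal stops being a proof: you state that one should decompose the deletion ideal so that every binding summand is a disjoint sum of $I(\mathcal{P}_{n-3,2})$ (or $I(\mathcal{P}_{n-4,2})$) and monomial primes, and you explicitly defer ``organising this final decomposition'' as the hard part. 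That organisation \emph{is} the inductive step; until the decomposition is exhibited and each summand checked against Lemma \ref{lecon}, the one case that is not routine remains unproved. It is also not a matter of imitating Theorem \ref{The7}: the argument there applies the Depth Lemma to quotient rings via short exact sequences, whereas here one needs a direct-sum splitting of the ideal itself into pieces that Lemma \ref{lecon} can see.

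The paper avoids the mod-$3$ analysis altogether by one structural choice: it splits at the \emph{second-to-last} column rather than the last one. Using $x_{n-1}\notin I(\mathcal{P}_{n,2})$ and then $y_{n-1}\notin I(\mathcal{P}_{n,2})\cap S'$, it writes $I(\mathcal{P}_{n,2})$ as a vector-space direct sum of three pieces: $(I(\mathcal{P}_{n,2})\cap S')\cap S''=(\mathcal{G}(I(\mathcal{P}_{n-2,2})),x_ny_n)S''$, then $y_{n-1}\big(I(\mathcal{P}_{n,2})\cap S':y_{n-1}\big)S'$ where the colon equals $\big(\mathcal{G}(I(\mathcal{P}_{n-3,2})),x_{n-2},y_{n-2},x_{n},y_{n}\big)S'$, and finally $x_{n-1}\big(I(\mathcal{P}_{n,2}):x_{n-1}\big)S_{n,2}$ where the colon equals $\big(\mathcal{G}(I(\mathcal{P}_{n-3,2})),x_{n-2},y_{n-2},y_{n-1},x_{n},y_{n}\big)$. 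Because the last column is never deleted, the edge $x_ny_n$ survives as a disjoint component of the first piece, and Lemma \ref{lecon} gives $\sdepth\geq\lceil\frac{n-2}{3}\rceil+\sdepth_{K[x_n,y_n]}(x_ny_n)=\lceil\frac{n-2}{3}\rceil+2>\lceil\frac{n}{3}\rceil$ in \emph{every} residue class; the $+2$ contributed by the surviving edge is exactly the unit your reduction to $\mathcal{P}_{n-1,2}$ loses, and the two colon pieces are handled uniformly by Lemma \ref{lecon}, \cite[Theorem 2.2]{BH} and Lemma \ref{111}. Your route could be repaired in the same spirit (inside the deletion ideal, split on $x_{n-1}$ and then $y_{n-1}$ rather than on $y_n$, so that the final piece is $I(\mathcal{P}_{n-2,2})$ with $y_n$ surviving as a free variable and Lemma \ref{111} restoring the missing $+1$), but this has to be written down and verified, not asserted.
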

\begin{proof}
Let $1\leq t \leq n$, then by Lemma \ref{The5} we have $\sdepth (S_{t,2}/I(\mathcal{P}_{t,2}))=\lceil\frac{t}{3}\rceil$. We use Lemma \ref{The5} in the proof without referring it again and again.  By the same lemma it is enough to show that $\sdepth (I(\mathcal{P}_{n,2}))> \lceil\frac{n}{3}\rceil$. The proof is by induction on $n$. If $n=1$ then by Remark \ref{path1} the required result follows. If $n=2,3$, then by \cite[Lemma 2.1]{KS}, $\sdepth (I(\mathcal{P}_{n,2}))>\lceil\frac{n}{3}\rceil$.
Now assume that $n\ge 4$. Since $x_{n-1}\not\in I(\mathcal{P}_{n,2})$, thus we have $$I(\mathcal{P}_{n,2})=I(\mathcal{P}_{n,2})\cap S'\oplus x_{n-1}\big(I(\mathcal{P}_{n,2}):x_{n-1}\big)S_{n,2},$$
where $S'=K[x_{1},x_{2},\dots,x_{n-2},x_n,y_{1},y_{2},\dots,y_{n}]$. Now $$I(\mathcal{P}_{n,2})\cap S'=\big(\mathcal{G}(I(\mathcal{P}_{n-2,2})),x_{n-2}y_{n-1},y_{n-2}y_{n-1},x_{n}y_{n},y_{n-1}x_{n},y_{n-1}y_{n}\big) \text{ and }$$ $$ \big(I(\mathcal{P}_{n,2}):x_{n-1}\big)S_{n,2}=\big(\mathcal{G}(I(\mathcal{P}_{n-3,2})),x_{n-2},y_{n-2},y_{n-1},x_{n},y_{n}\big)S_{n,2}.$$
As $y_{n-1}\not\in I(\mathcal{P}_{n,2})\cap S'$, so we get
\begin{eqnarray*}
I(\mathcal{P}_{n,2})\cap S'= (I(\mathcal{P}_{n,2})\cap S')\cap S''\oplus y_{n-1}\big(I(\mathcal{P}_{n,2})\cap S':y_{n-1}\big)S',
\end{eqnarray*}
where $S''=K[x_{1},\dots,x_{n-2},x_n,y_{1},\dots,y_{n-2},y_{n}]$. Thus
$$I(\mathcal{P}_{n,2})=(I(\mathcal{P}_{n,2})\cap S')\cap S''\oplus y_{n-1}\big(I(\mathcal{P}_{n,2})\cap S':y_{n-1}\big)S'\oplus x_{n-1}\big(I(\mathcal{P}_{n,2}):x_{n-1}\big)S_{n,2},$$
where
$$(I(\mathcal{P}_{n,2})\cap S')\cap S''=(\mathcal{G}(I(\mathcal{P}_{n-2,2})),x_ny_n)S''$$ and
$$(I(\mathcal{P}_{n,2})\cap S':y_{n-1}\big)S'=\big(\mathcal{G}(I(\mathcal{P}_{n-3,2})),x_{n-2},y_{n-2},x_{n},y_{n}\big)S'.$$
By induction on $n$ and  Lemma \ref{lecon} we have
$$\sdepth((I(\mathcal{P}_{n,2})\cap S')\cap S'')\geq \sdepth(S_{n-2,2}/I(\mathcal{P}_{n-2,2}))+\sdepth_{K[x_n,y_n]}(x_ny_n).$$
Again by induction on $n$, Lemma \ref{lecon} and Lemma \ref{111} we have
$$\sdepth((I(\mathcal{P}_{n,2})\cap S':y_{n-1}\big)S')\geq\sdepth(S_{n-3,2}/I(\mathcal{P}_{n-3,2}))+\sdepth_T(x_{n-2},y_{n-2},x_{n},y_{n})+1$$
and $$\sdepth\big(\big (I(\mathcal{P}_{n,2}):x_{n-1}\big)S_{n,2}\big)\geq \sdepth(S_{n-3,2}/I(\mathcal{P}_{n-3,2}))+\sdepth_R(x_{n-2},y_{n-2},y_{n-1},x_{n},y_{n}\big)+1,$$
where $T=[x_{n-2},y_{n-2},x_{n},y_{n}]$ and $R=K[x_{n-2},y_{n-2},y_{n-1},x_{n},y_{n}]$. Thus $\sdepth((I(\mathcal{P}_{n,2})\cap S')\cap S'')>\lceil\frac{n}{3}\rceil$ as $\sdepth_{K[x_n,y_n]}(x_ny_n)=2$. By \cite[Theorem 2.2]{BH} we have
$\sdepth((I(\mathcal{P}_{n,2})\cap S':y_{n-1}\big)S')>\lceil\frac{n}{3}\rceil$ and $\sdepth(\big(I(\mathcal{P}_{n,2}):x_{n-1}\big)S_{n,2})> \lceil\frac{n}{3}\rceil$. This completes the proof.
\end{proof}
Now we introduce some notations for the case $m=3$. For $3\leq l\leq n-2$, let $J_l:=(x_{n-l},z_{n-l},x_{n-l+1},y_{n-l-1},z_{n-l+1},x_{n-l-1},z_{n-l-1})$, $I(P'_{l-1}):=(x_{n-l+2}x_{n-l+3},\dots,x_{n-1}x_n)$ and $I(P''_{l-1}):=(z_{n-l+2}z_{n-l+3},\dots,z_{n-1}z_n)$ be the monomial ideals of $S_{n,3}$. Consider the subsets of variables $D_{l}:=\{x_{n-l+2},x_{n-l+3},\dots,x_{n-1},x_n\}$, $D'_l:=\{z_{n-l+2},z_{n-l+3},\dots,z_{n-1},z_n\}$ and $D''_l:=\{x_{n-l},z_{n-l},x_{n-l+1},y_{n-l-1},z_{n-l+1},x_{n-l-1},z_{n-l-1}\}$. Let $L_l$ be a monomial ideal of $S_{n,3}$ such that $L_l=I(P'_{l-1})+I(P''_{l-1})+J_l$. With these notations we have the following lemma:
\begin{Lemma}\label{path}
Let $3\leq l\leq n-2$, then $\sdepth_{K[D_l\cup D'_l\cup D''_l]}(L_l)\geq \lceil\frac{l+2}{3}\rceil+1.$
\end{Lemma}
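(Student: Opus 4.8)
The plan is to exploit the fact that $L_l$ is a sum of three square free monomial ideals supported on the three \emph{pairwise disjoint} sets of variables $D_l$, $D'_l$, $D''_l$, and then to build up its Stanley depth by two nested applications of Lemma \ref{lecon}. Indeed $I(P'_{l-1})\subset K[D_l]$ and $I(P''_{l-1})\subset K[D'_l]$ are the edge ideals of paths on $l-1$ vertices, while $J_l\subset K[D''_l]$ is generated by all seven variables of $D''_l$, so $J_l$ is precisely the graded maximal ideal of $K[D''_l]$. The first thing I would record is therefore $\sdepth(K[D''_l]/J_l)=\sdepth(K)=0$, together with $\sdepth_{K[D''_l]}(J_l)\geq 1$ from Theorem \ref{oka}; in particular the strict inequality $\sdepth(J_l)>\sdepth(K[D''_l]/J_l)$ needed to feed $J_l$ into Lemma \ref{lecon} holds.

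First I would combine the two path ideals. Since $I(P'_{l-1})$ is the edge ideal of a path, Remark \ref{path1} gives $\sdepth_{K[D_l]}(I(P'_{l-1}))>\sdepth(K[D_l]/I(P'_{l-1}))=\lceil\frac{l-1}{3}\rceil$, and, as Stanley depth is an integer, the same source yields $\sdepth_{K[D'_l]}(I(P''_{l-1}))\geq\lceil\frac{l-1}{3}\rceil+1$ for the second path. Applying Lemma \ref{lecon} with $I_1=I(P'_{l-1})$ and $I_2=I(P''_{l-1})$ then gives
\begin{equation*}
\sdepth_{K[D_l\cup D'_l]}\big(I(P'_{l-1})+I(P''_{l-1})\big)\geq \sdepth(K[D_l]/I(P'_{l-1}))+\sdepth_{K[D'_l]}(I(P''_{l-1}))\geq 2\left\lceil\tfrac{l-1}{3}\right\rceil+1.
\end{equation*}

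Next I would bring in $J_l$. Writing $L_l=J_l+\big(I(P'_{l-1})+I(P''_{l-1})\big)$ as a sum over the disjoint variable sets $D''_l$ and $D_l\cup D'_l$, and using the hypothesis $\sdepth(J_l)>\sdepth(K[D''_l]/J_l)$ verified in the first step, Lemma \ref{lecon} yields
\begin{equation*}
\sdepth_{K[D_l\cup D'_l\cup D''_l]}(L_l)\geq \sdepth(K[D''_l]/J_l)+\sdepth_{K[D_l\cup D'_l]}\big(I(P'_{l-1})+I(P''_{l-1})\big)\geq 0+2\left\lceil\tfrac{l-1}{3}\right\rceil+1.
\end{equation*}
The proof then closes with a purely arithmetic step: since $l\geq 3$ we have $\lceil\frac{l-1}{3}\rceil\geq 1$, whence $2\lceil\frac{l-1}{3}\rceil+1\geq \lceil\frac{l-1}{3}\rceil+2=\lceil\frac{l+2}{3}\rceil+1$, which is the claimed bound.

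I expect the only real obstacle to lie in the setup rather than in the estimates. One must check carefully that $D_l$, $D'_l$, $D''_l$ are genuinely pairwise disjoint — this is exactly where the index ranges $n-l+2,\dots,n$ for the two paths, versus the indices $n-l-1,n-l,n-l+1$ occurring in $D''_l$, must be kept separate — and that $J_l$ is literally the maximal ideal of $K[D''_l]$, so that $\sdepth(K[D''_l]/J_l)=0$ while Theorem \ref{oka} still forces $\sdepth(J_l)\geq 1$, making the strict hypothesis of Lemma \ref{lecon} available at the crucial step. Once the disjoint-sum structure and these hypotheses are in place, the two applications of Lemma \ref{lecon} and the ceiling bookkeeping are routine.
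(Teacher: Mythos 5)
Your proposal is correct and takes essentially the same route as the paper: both rest on the disjoint-support decomposition $L_l=I(P'_{l-1})+I(P''_{l-1})+J_l$ and on Cimpoeas's theorem for sums of ideals in disjoint sets of variables --- the paper applies \cite[Theorem 1.3]{MC} directly and then estimates the resulting terms via Theorem \ref{oka} and \cite[Proposition 2.1]{PFY}, while you apply its corollary, Lemma \ref{lecon}, twice, using Remark \ref{path1} together with the observation that $J_l$ is the graded maximal ideal of $K[D''_l]$ (so $\sdepth(K[D''_l]/J_l)=0$ while $\sdepth_{K[D''_l]}(J_l)\geq 1$). The numerical bookkeeping differs slightly (your bound $2\lceil\frac{l-1}{3}\rceil+1$ versus the paper's min of Okazaki-type terms), but both exceed $\lceil\frac{l+2}{3}\rceil+1$, so your argument is sound.
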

\begin{proof}
Since $L_l=I(P'_{l-1})+I(P''_{l-1})+J_l$, by \cite[Theorem 1.3]{MC}, we have
\begin{multline}\label{eq5}
$$\sdepth_{K[D_l\cup D'_l\cup D''_l]}(L_l)\geq \min\big\{\sdepth_{K[D_l\cup D'_l\cup D''_l]}(J_l),\min\{\sdepth_{K[D_l\cup D'_l]}(I(P'_{l-1})),\\\sdepth_{K[D_l]}(K[D_l]/I(P'_{l-1}))+\sdepth_{K[D'_l]}(I(P''_{l-1}))\}\big\}.$$
\end{multline}
By using \cite[Theorem 2.3]{O} and \cite[Proposition 2.1]{PFY}, Eq. \ref{eq5} implies that
\begin{multline*}\label{eq4}
$$\sdepth_{K[D_l\cup D'_l\cup D''_l]}(L_l)\geq \min \{4+2(l-2),\min\{2l-2-\lfloor\frac{l-2}{2}\rfloor, \lceil\frac{l-1}{3}\rceil+l-1-\lfloor\frac{l-2}{2}\rfloor\}\}\geq \lceil\frac{l+2}{3}\rceil+1.$$
\end{multline*}
\end{proof}

\begin{Theorem}\label{th11}
Let $n\geq 1$$, then \sdepth (I(\mathcal{P}_{n,3}))>\sdepth (S_{n,3}/I(\mathcal{P}_{n,3})).$
\end{Theorem}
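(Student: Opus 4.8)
The plan is to use Lemma \ref{Th31} to replace the target inequality by the strict bound
\[
\sdepth\big(I(\mathcal{P}_{n,3})\big)\;\geq\;\Big\lceil\tfrac{n}{3}\Big\rceil+1,
\]
since $\sdepth(S_{n,3}/I(\mathcal{P}_{n,3}))=\lceil n/3\rceil$, and then to prove this by induction on $n$, mirroring the structure of the proof of Theorem \ref{th3} but with the extra $z$-row absorbed into the auxiliary ideals $L_l$. For the base cases $n=1,2$ one uses $I(\mathcal{P}_{1,3})\cong I(P_3)$ (Remark \ref{path1}) and $I(\mathcal{P}_{2,3})\cong I(\mathcal{P}_{3,2})$ (Theorem \ref{th3}); since the induction will step $n$ down to $n-2$ and $n-3$, as in Theorem \ref{th3}, I would also check a few further small values ($n=3,4,5$) directly from \cite[Theorem 2.3]{O} or by CoCoA so that enough ground cases are available.

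For the inductive step I would repeatedly apply the $\ZZ^n$-graded $K$-vector-space decomposition
\[
I=(I\cap S')\;\oplus\;v\,(I:v)\,S_{n,3},
\]
valid for any variable $v\notin I$ (every variable qualifies here, as $I(\mathcal{P}_{n,3})$ is generated in degree $2$), choosing $v$ to run through the variables of the last columns, say $x_{n-1}$, then $y_{n-1}$, then the relevant $z$-variables, exactly as $x_{n-1}$ and $y_{n-1}$ were peeled off in Theorem \ref{th3}. Iterating this splitting expresses $I(\mathcal{P}_{n,3})$ as a finite direct sum of Stanley spaces; since $\sdepth$ of a direct sum is the minimum of the summand values, it then suffices to bound each summand below by $\lceil n/3\rceil+1$.

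The summands fall into two types. The first is a \emph{bulk} summand which, after relabelling and pulling out the free variables of the peeled columns, has the form $I(\mathcal{P}_{n-3,3})+I'$ with $I'$ a monomial ideal in a disjoint set of variables. Here I would feed the induction hypothesis into Lemma \ref{lecon}: the inductive statement for $n-3$ is exactly the hypothesis $\sdepth(I(\mathcal{P}_{n-3,3}))>\sdepth(S_{n-3,3}/I(\mathcal{P}_{n-3,3}))$ that Lemma \ref{lecon} requires, and it yields $\sdepth(I(\mathcal{P}_{n-3,3})+I')\geq \sdepth(S_{n-3,3}/I(\mathcal{P}_{n-3,3}))+\sdepth(I')=(\lceil n/3\rceil-1)+\sdepth(I')$; since the auxiliary factor $I'$ contributes at least $2$ (playing the role that $\sdepth_{K[x_n,y_n]}(x_ny_n)=2$ played for $m=2$), this is $\geq\lceil n/3\rceil+1$, with Lemma \ref{111} accounting for the remaining free variables. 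The second type consists of the \emph{boundary} summands, which are precisely the ideals $L_l=I(P'_{l-1})+I(P''_{l-1})+J_l$ tensored with a polynomial ring in the consumed variables; for these Lemma \ref{path} gives $\sdepth(L_l)\geq\lceil (l+2)/3\rceil+1$, and the relevant range $3\leq l\leq n-2$ together with the free variables supplied by Lemma \ref{111} keeps every such piece above $\lceil n/3\rceil$. Any residual summand generated mostly by variables is controlled by \cite[Theorem 2.2]{BH}. Taking the minimum over all summands gives the claim.

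The hard part will be the bookkeeping of this decomposition: one must list exactly which summands the iterated splitting produces and verify \emph{uniformly in the residue of $n$ modulo $3$} that each has $\sdepth>\lceil n/3\rceil$. The tightest constraints come from the boundary pieces $L_l$ — which is precisely why Lemma \ref{path} was isolated beforehand — and from confirming that the auxiliary contributions (the free variables of the peeled columns and the linear and quadratic generators thrown off by the colon operations) always supply the one extra unit of Stanley depth that makes the inequality strict.
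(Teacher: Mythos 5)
Your overall strategy---reducing via Lemma \ref{Th31} to the bound $\sdepth(I(\mathcal{P}_{n,3}))\geq\lceil\frac{n}{3}\rceil+1$, inducting on $n$, splitting $I(\mathcal{P}_{n,3})$ into a direct sum, and controlling the summands with Lemmas \ref{lecon}, \ref{path}, \ref{111} and \cite[Theorem 2.2]{BH}---is indeed the paper's strategy, and your base cases are fine. But the decomposition you actually propose, peeling the last-column variables $x_{n-1}$, then $y_{n-1}$, then $z$-variables ``exactly as in Theorem \ref{th3}'', does not produce summands of the form you claim, and this is a genuine gap. The point is that in $\mathcal{P}_{n,3}$ the $x$-row and the $z$-row are not adjacent: they are joined only through the $y$-row. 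Hence $(I(\mathcal{P}_{n,3}):x_{n-1})$ is \emph{not} of the form $I(\mathcal{P}_{n-3,3})+(\text{variables})$. Deleting $x_{n-1}$ and its neighbours $x_{n-2},x_{n},y_{n-2},y_{n-1},y_{n}$ leaves the entire $z$-path $z_1,\dots,z_n$ intact and still attached to $y_1,\dots,y_{n-3}$, so this colon ideal contains the edge ideal of a \emph{connected} graph (a copy of $\mathcal{P}_{n-3,3}$ with a three-vertex $z$-tail glued on at column $n-3$), which cannot be written as $I(\mathcal{P}_{n-3,3})+I'$ with $I'$ in a disjoint set of variables. Consequently Lemma \ref{lecon} does not apply to it, and your induction hypothesis---which concerns only the rectangles $\mathcal{P}_{k,3}$---says nothing about it; the analogous pieces coming from $z$-peels involve the starred graphs $\mathcal{P}^{\star}_{k,3}$, for whose \emph{ideals} (as opposed to quotients) no ideal-versus-quotient comparison is available in the paper or in your sketch. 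Your appeal to Lemma \ref{path} is also misplaced in this setting: the ideals $L_l$ consist of two \emph{disjoint} path ideals plus a variable ideal, and such configurations simply do not arise from column peeling.

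The idea your sketch is missing is that one should peel off the entire \emph{middle row} instead. The paper splits off $y_n$, then $y_{n-1}$, and so on down to $y_1$, setting $R_l=K[x_1,\dots,x_n,y_1,\dots,y_{n-l},z_1,\dots,z_n]$ and writing
\begin{equation*}
I(\mathcal{P}_{n,3})=\big(I(\mathcal{P}_{n,3})\cap R_n\big)\oplus\bigoplus_{l=1}^{n-1}y_{n-l}\big(I(\mathcal{P}_{n,3})\cap R_l:y_{n-l}\big)R_l\oplus y_n\big(I(\mathcal{P}_{n,3}):y_n\big)S_{n,3}.
\end{equation*}
Because the $y$'s are the only vertices joining the two outer rows, every summand here really is a sum of ideals in pairwise disjoint variable sets: the bulk piece $I(\mathcal{P}_{n,3})\cap R_n$ is the sum of two disjoint path ideals (in the $x$'s and the $z$'s), and $\big(I(\mathcal{P}_{n,3})\cap R_l:y_{n-l}\big)$ equals $I(\mathcal{P}_{n-(l+2),3})+L_l$---this is exactly where the ideals $L_l$ of Lemma \ref{path} come from. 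Only with this row decomposition do the induction hypothesis, Lemma \ref{lecon} and Lemma \ref{path} mesh in the way you intend. To salvage your column-peeling route you would need to prove additional lemmas comparing the Stanley depth of ideal and quotient for the tailed families $\mathcal{P}^{\star}_{k,3}$, $\mathcal{P}^{\star\star}_{k,3}$ (and graphs with longer tails), a substantial addition that your proposal does not supply.
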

\begin{proof}
Let $1\leq t \leq n$, then by Lemma \ref{Th31} we have $\sdepth (S_{t,3}/I(\mathcal{P}_{t,3}))=\lceil\frac{t}{3}\rceil$. We use Lemma \ref{Th31} in the proof several times without referring it. Using the same lemma it is enough to show that $\sdepth (I(\mathcal{P}_{n,3}))> \lceil\frac{n}{3}\rceil$. We proceed by induction on $n$. If $n=1$, then by Remark \ref{path1} the required result follows. If $n=2$, the result follows by Theorem \ref{th3}. If $n=3$ then by \cite[Lemma 2.1]{KS} $\sdepth (I(\mathcal{P}_{3,3}))>\lceil\frac{3}{3}\rceil$. If $n\geq 4$, then we consider the following decomposition of $I(\mathcal{P}_{n,3})$ as a vector space:
$$I(\mathcal{P}_{n,3})=I(\mathcal{P}_{n,3})\cap R_{1}\oplus y_{n}(I(\mathcal{P}_{n,3}):y_{n})S_{n,3}.$$ Similarly, we can decompose $I(\mathcal{P}_{n,3})\cap R_{1}$ by the following:
$$I(\mathcal{P}_{n,3})\cap R_{1}=I(\mathcal{P}_{n,3})\cap R_{2}\oplus y_{n-1}(I(\mathcal{P}_{n,3})\cap R_{1}:y_{n-1})R_{1}.$$
Continuing in the same way for $1\leq l\leq n-1$ we have
$$I(\mathcal{P}_{n,3})\cap R_{l}=I(\mathcal{P}_{n,3})\cap R_{l+1}\oplus y_{n-l}(I(\mathcal{P}_{n,3})\cap R_{l}:y_{n-l})R_{l},$$
where $R_l:=K[x_1,x_2,\dots x_n,y_{1},y_{2},\dots,y_{n-l},z_1,z_2,\dots,z_n]$.
Finally we get the following decomposition of $I(\mathcal{P}_{n,3})$:
\begin{equation*}
I(\mathcal{P}_{n,3})=I(\mathcal{P}_{n,3})\cap R_{n}\oplus\oplus_{l=1}^{n-1}y_{n-l}(I(\mathcal{P}_{n,3})\cap R_{l}:y_{n-l})R_{l}\oplus y_{n}(I(\mathcal{P}_{n,3}):y_{n})S_{n,3}.
\end{equation*}
Therefore
\begin{multline}\label{eq3}
\sdepth(I(\mathcal{P}_{n,3}))\geq \min\big\{\sdepth(I(\mathcal{P}_{n,3})\cap R_{n}),\sdepth((I(\mathcal{P}_{n,3}):y_{n})S_{n,3}),\\\min_{l=1}^{n-1}\{\sdepth((I(\mathcal{P}_{n,3})\cap R_{l}:y_{n-l})R_{l})\}\big\}.
\end{multline}
Since $$I(\mathcal{P}_{n,3})\cap R_{n}=\big((x_1x_2,x_2x_3,\dots,x_{n-1}x_n)+(z_1z_2,z_2z_3,\dots,z_{n-1}z_n)\big)K[x_1,\dots,x_n,z_1,\dots,z_n],$$
thus by \cite[Theorem 1.3]{MC} and \cite[Proposition 2.1]{PFY} we have $\sdepth(I(\mathcal{P}_{n,3})\cap R_{n})>\lceil\frac{n}{3}\rceil.$
As we can see that
$$(I(\mathcal{P}_{n,3}):y_{n})S_{n,3}=(\mathcal{G}(I(\mathcal{P}_{n-2,3}))+(x_n,z_n,x_{n-1},z_{n-1},y_{n-1}))[y_{n}].$$ Let $B:=K[x_n,z_n,x_{n-1},z_{n-1},y_{n-1}]$ thus by induction on $n$, Lemmas \ref{lecon} and \ref{111}
$$\sdepth((I(\mathcal{P}_{n,3}):y_{n})S_{n,3})>\sdepth(S_{n-2,3}/I(\mathcal{P}_{n-2,3}))+\sdepth_{B} (x_n,z_n,x_{n-1},z_{n-1},y_{n-1})+1.$$
By \cite[Theorem 2.2]{BH} we have $\sdepth((I(\mathcal{P}_{n,3}):y_{n})S_{n,3})> \lceil\frac{n}{3}\rceil.$
\begin{description}
\item [(1)] If $l=1$, then
$(I(\mathcal{P}_{n,3})\cap R_{1}:y_{n-1})R_{1}= \Big(\mathcal{G}(I(\mathcal{P}_{n-3,3}))+ J_1\Big)[y_{n-1}]$,
where  $J_1:=(x_{n-1},z_{n-1},x_{n},y_{n-2},z_{n},x_{n-2},z_{n-2})$, then by induction on $n$, Lemmas \ref{lecon} and \ref{111}, we have
$$\sdepth((I(\mathcal{P}_{n,3})\cap R_{1}:y_{n-1})R_{1})> \sdepth(S_{n-3,3}/I(\mathcal{P}_{n-3,3}))+\sdepth_{K[\supp(J_1)]} (J_1)+1,$$
by \cite[Theorem 2.2]{BH} we have $\sdepth((I(\mathcal{P}_{n,3})\cap R_{1}:y_{n-1})R_{1})>\lceil\frac{n}{3}\rceil.$
\item [(2)] If $l=2$ and $n\neq 4$, then
$$(I(\mathcal{P}_{n,3})\cap R_{2}:y_{n-2})R_{2}= \Big(\mathcal{G}(I(\mathcal{P}_{n-4,3}))+J_2\Big)[y_{n-2},x_{n},z_{n}],$$
where $J_2:=(x_{n-2},z_{n-2},x_{n-1},z_{n-1},x_{n-3},y_{n-3},z_{n-3})$, using the same arguments as in case(1) we have
$\sdepth((I(\mathcal{P}_{n,3})\cap R_{2}:y_{n-2})R_{2})>\lceil\frac{n}{3}\rceil.$
\item[(3)] If $3\leq l\leq n-3$, then $(I(\mathcal{P}_{n,3})\cap R_{l}:y_{n-l})R_{l}= \Big(\mathcal{G}(I(\mathcal{P}_{n-(l+2),3}))+\mathcal{G}(L_l)\Big)[y_{n-l}]$,
by induction on $n$, Lemmas \ref{lecon} and \ref{111}, we have
\begin{equation}\label{eq1}
\sdepth((I(\mathcal{P}_{n,3})\cap R_{l}:y_{n-l})R_{l})> \sdepth(S_{{n-(l+2),3}}/(I(\mathcal{P}_{n-(l+2),3})))+\sdepth_{K[D_l\cup D'_l\cup D''_l]}(L_l)+1,
\end{equation}
By Eq. \ref{eq1} and Lemma \ref{path} we have
$$\sdepth((I(\mathcal{P}_{n,3})\cap R_{l}:y_{n-l})R_{l})> \lceil\frac{n-(l+2)}{3}\rceil+\lceil\frac{l+2}{3}\rceil+1+1>\lceil\frac{n}{3}\rceil.$$
\item[(4)] If $l=n-2$, then
$(I(\mathcal{P}_{n,3})\cap R_{n-2}:y_{2})R_{n-2}= (\mathcal{G}(L_{n-2}))[y_{2}]$,  by Lemmas \ref{path} and \ref{111}
we have $\sdepth((I(\mathcal{P}_{n,3})\cap R_{n-2}:y_{2})R_{n-2})>\lceil\frac{n}{3}\rceil.$
\item [(5)] If $l=n-1$, then
$$(I(\mathcal{P}_{n,3})\cap R_{n-1}:y_{1})R_{n-1}= \big(I(P'_{n-2})+I(P''_{n-2})+J_{n-1}\big)K[D_{n-1}\cup D'_{n-1}\cup D''_{n-1}\cup \{y_{1}\}],$$
where $\mathcal{G}(J_{n-1})=\{x_1,z_1,x_2,z_2\}$, $D_{n-1}=\{x_3,x_4,\dots,x_n\}$, $D'_{n-1}=\{z_3,z_4,\dots,z_n\}$ and $D''_{n-1}=\{x_1,z_1,x_2,z_2\}$.  Using the proof of Lemma \ref{path} and by Lemma \ref{111} $$\sdepth_{K[D_{n-1}\cup D'_{n-1}\cup D''_{n-1}\cup \{y_{1}\}]} \big(I(P'_{n-2})+I(P''_{n-2})+J_{n-1}\big)>\lceil\frac{n}{3}\rceil,$$ that is
 $\sdepth((I(\mathcal{P}_{n,3})\cap R_{n-1}:y_{1})R_{n-1})>\lceil\frac{n}{3}\rceil.$
\end{description}
Thus by Eq. \ref{eq3} we get $\sdepth(I(\mathcal{P}_{n,3}))>\lceil\frac{n}{3}\rceil$.
\end{proof}

\begin{Proposition}\label{Pro6}
 Let $n\geq 3$, then $\sdepth({I(\mathcal{C}_{n,2})}/{I(\mathcal{P}_{n,2})})\geq\lceil\frac{n+2}{3}\rceil.$
\end{Proposition}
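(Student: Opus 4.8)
The plan is to realize $M := I(\mathcal{C}_{n,2})/I(\mathcal{P}_{n,2})$ as an explicit direct sum of four cyclic $S_{n,2}$-submodules, each of which is, up to a monomial shift, a two-variable polynomial extension of a module of type $S_{n-4,2}/I(\mathcal{P}_{n-4,2})$, and then to read off its Stanley depth from Lemmas \ref{The5} and \ref{111}. First I would describe $M$ monomially. Since $\mathcal{G}(I(\mathcal{C}_{n,2}))=\mathcal{G}(I(\mathcal{P}_{n,2}))\cup\{x_1y_n,x_1x_n,y_1x_n,y_1y_n\}$, a $K$-basis of $M$ is given by the monomials $u$ whose support is independent in $\mathcal{P}_{n,2}$ but not in $\mathcal{C}_{n,2}$. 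Such a $u$ must contain both endpoints of one of the four new edges while avoiding every edge of $\mathcal{P}_{n,2}$; as $x_1y_1$ and $x_ny_n$ are edges of $\mathcal{P}_{n,2}$, this forces $\supp(u)$ to contain exactly one of $x_1,y_1$ and exactly one of $x_n,y_n$, and all four such corner combinations occur since each matches one new generator.

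This suggests splitting $M$ according to the corner pattern: let $M_a,M_b,M_c,M_d$ be the $K$-spans of the basis monomials containing, respectively, $\{x_1,x_n\}$, $\{x_1,y_n\}$, $\{y_1,x_n\}$, $\{y_1,y_n\}$. I would then check that this is a decomposition of $M$ as $S_{n,2}$-modules: multiplying a monomial of one pattern by an interior variable, or by a corner variable already present in its pattern, preserves the pattern, while multiplying by the opposite corner variable on the same side (e.g.\ $y_1$ for a monomial containing $x_1$) creates a multiple of a $\mathcal{P}_{n,2}$-edge and hence $0$ in $M$. Thus each $M_\bullet$ is a submodule, the four are $K$-independent, and $M=M_a\oplus M_b\oplus M_c\oplus M_d$, so that $\sdepth(M)\geq\min\{\sdepth(M_a),\sdepth(M_b),\sdepth(M_c),\sdepth(M_d)\}$ simply by juxtaposing Stanley decompositions of the summands.

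It then remains to compute one summand, the rest being symmetric. For $M_a$ a basis monomial is $x_1x_n\cdot w$, where $w$ is supported on the interior columns $3,\dots,n-2$ with independent support there; the corner variables $x_1,x_n$ may carry arbitrary positive exponents, the six forbidden neighbor variables annihilate the summand, and the induced subgraph of $\mathcal{P}_{n,2}$ on columns $3,\dots,n-2$ is exactly $\mathcal{P}_{n-4,2}$. Hence $M_a\cong x_1x_n\,(S_{n-4,2}/I(\mathcal{P}_{n-4,2}))[x_1,x_n]$, and by Lemmas \ref{The5} and \ref{111} its Stanley depth equals $\lceil\frac{n-4}{3}\rceil+2=\lceil\frac{n+2}{3}\rceil$. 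The same bookkeeping gives the identical value for $M_b,M_c,M_d$, and feeding these into the displayed minimum yields $\sdepth(M)\geq\lceil\frac{n+2}{3}\rceil$, as claimed.

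The main obstacle I anticipate is entirely bookkeeping rather than structural: verifying carefully that deleting each corner vertex together with its three $\mathcal{P}_{n,2}$-neighbors leaves the induced subgraph on columns $3,\dots,n-2$ equal to $\mathcal{P}_{n-4,2}$, so that no interior edge is accidentally lost, and handling the small-$n$ boundary cases $n=3,4,5$ where the interior is empty or a single column. In those cases the factor $S_{n-4,2}/I(\mathcal{P}_{n-4,2})$ degenerates to $K$ (read with the convention $\lceil\frac{k}{3}\rceil=0$ for $k\le 0$), and each $M_\bullet$ is merely a shifted copy of a polynomial ring $K[x_i,x_j]$ of Stanley depth $2=\lceil\frac{n+2}{3}\rceil$, so the conclusion persists.
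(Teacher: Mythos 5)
Your decomposition is exactly the one the paper uses: for $n\geq 6$ the paper writes $I(\mathcal{C}_{n,2})/I(\mathcal{P}_{n,2})$ as the direct sum of the four corner-pattern pieces $x_1x_n\frac{\tilde S}{T}[x_1,x_n]\oplus y_1y_n\frac{\tilde S}{T}[y_1,y_n]\oplus x_1y_n\frac{\tilde S}{T}[x_1,y_n]\oplus y_1x_n\frac{\tilde S}{T}[y_1,x_n]$, where $\tilde S/T\cong S_{n-4,2}/I(\mathcal{P}_{n-4,2})$ is the interior on columns $3,\dots,n-2$, and then invokes Lemmas \ref{The5} and \ref{111} to get $\lceil\frac{n-4}{3}\rceil+2=\lceil\frac{n+2}{3}\rceil$, precisely as you do. Your verification that the four pieces are honest $\ZZ$-graded direct summands (opposite corner variables and the neighbor columns $2$ and $n-1$ annihilate each piece) is correct, and for $n\geq 6$ as well as for $n=3,4$ (empty interior, each summand a shifted $K[x_i,x_j]$) your argument goes through and matches the paper's.

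The one genuine flaw is your treatment of $n=5$. You lump it with $n=3,4$ and assert that the interior factor ``degenerates to $K$,'' so that each summand is a shifted two-variable polynomial ring of Stanley depth $2=\lceil\frac{n+2}{3}\rceil$. For $n=5$ both halves of that sentence fail: the interior is the single column $\{x_3,y_3\}$, which carries the edge $x_3y_3$, so the factor is $K[x_3,y_3]/(x_3y_3)=S_{1,2}/I(\mathcal{P}_{1,2})$, not $K$; and the target value is $\lceil\frac{7}{3}\rceil=3$, not $2$. If your summands really had Stanley depth $2$, your proof would not establish the proposition at $n=5$. The repair stays entirely inside your framework: Lemma \ref{The5} is stated for all $n\geq 1$, so $\sdepth(S_{1,2}/I(\mathcal{P}_{1,2}))=1$ and each summand has Stanley depth $1+2=3$; in other words, the generic argument already covers $n=5$, and only $n=3,4$ need the degenerate reading. (The paper instead handles $n=5$ by an explicit eight-piece Stanley decomposition, which is nothing but the refinement of your four summands obtained by splitting $K[x_3,y_3]/(x_3y_3)$ as $K[x_3]\oplus y_3K[y_3]$.)
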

\begin{proof}
For $3 \leq n\leq 5$, we use \cite{HVZ} to show that there exist Stanley decompositions of desired Stanley depth.
When  $n=3$ or 4, then
\begin{eqnarray*}
I(\mathcal{C}_{n,2})/I(\mathcal{P}_{n,2})=x_{1}x_{n}K[x_{1},x_{n}]\oplus x_{1}y_{n}K[x_{1},y_{n}]\oplus y_{1}x_{n}K[y_{1},x_{n}]\oplus y_{1}y_{n}K[y_{1},y_{n}].
\end{eqnarray*}
If $n=5$, then
\begin{eqnarray*}
I(\mathcal{C}_{5,2})/I(\mathcal{P}_{5,2})=x_{1}x_{5}K[x_{1},x_{3},x_{5}]\oplus x_{1}y_{5}K[x_{1},x_{3},y_{5}]\oplus y_{1}x_{5}K[y_{1},x_{3},x_{5}]\oplus y_{1}y_{5}K[y_{1},x_{3},y_{5}]\\ \oplus x_{1}y_{3}x_{5}K[x_{1},y_{3},x_{5}]\oplus x_{1}y_{3}y_{5}K[x_{1},y_{3},y_{5}]\oplus y_{1}y_{3}y_{5}K[y_{1},y_{3},y_{5}]\oplus y_{1}y_{3}x_{5}K[y_{1},y_{3},x_{5}].
\end{eqnarray*}
Let $n\geq6$ and $T:=(\bigcup^{n-3}_{i=3}\{x_{i}y_{i},x_{i}y_{i+1},x_{i}x_{i+1},x_{i+1}y_{i},y_{i}y_{i+1}\},x_{n-2}y_{n-2}\big)\subset \tilde{S}$, where $\tilde{S}:=K[x_{3},x_{4},\dots,x_{n-2},y_{3},y_{4}\dots,y_{n-2}]$. Then we have the following $K$-vector space isomorphism:
$$I(\mathcal{C}_{n,2})/I(\mathcal{P}_{n,2})\cong   x_{1}x_{n}\frac{\tilde{S}}{T}[x_{1},x_{n}]  \oplus y_{1}y_{n}\frac{\tilde{S}}{T}[y_{1},y_{n}]  \oplus
x_{1}y_{n}\frac{\tilde{S}}{T}[x_{1},y_{n}]  \oplus
y_{1}x_{n}\frac{\tilde{S}}{T}[y_{1},x_{n}].$$
Thus by Lemmas \ref{The5} and \ref{111}, we have $\sdepth(I(\mathcal{C}_{n,2})/I(\mathcal{P}_{n,2}))\geq \lceil\frac{n+2}{3}\rceil.$
\end{proof}
For $n\geq 6$, let $Q=\{x_{1},y_{1},x_{2},y_{2},x_{n},y_{n},x_{n-1},y_{n-1}\}.$ Consider a subgraph $\mathcal{C}^{\diamond}_{n,3}$ of $\mathcal{C}_{n,3}$ with vertex set $V(\mathcal{C}^{\diamond}_{n,3})=V(\mathcal{C}_{n,3})\setminus Q$ and edge set
$$E(\mathcal{C}^{\diamond}_{n,3})=E(\mathcal{C}_{n,3})\setminus\{e\in E(\mathcal{C}_{n,3}): \text{ where $e$ has at least one end vertex in $Q$}\}.$$
For example of $\mathcal{C}^{\diamond}_{n,3}$ see Fig. \ref{C}.
\begin{figure}[h!]
\centering
 \includegraphics[width=4cm]{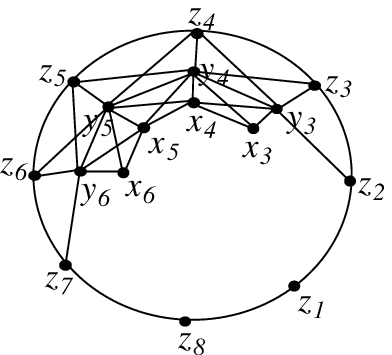}\\
 \caption {$\mathcal{C}^{\diamond}_{8,3}.$}\label{C}
\end{figure}

\begin{Lemma}\label{The425} Let $n\geq 6$, if $n\equiv 0\,(\mod 3)$, then $\sdepth(S^{\diamond}_{n,3}/I(C^{\diamond}_{n,3}))=\lceil\frac{n-2}{3}\rceil.$
Otherwise,\\ $\lceil\frac{n-2}{3}\rceil\leq\sdepth(S^{\diamond}_{n,3}/I(C^{\diamond}_{n,3}))\leq \lceil\frac{n}{3}\rceil.$
\end{Lemma}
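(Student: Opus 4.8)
The plan is to read off the concrete combinatorial structure of $\mathcal{C}^{\diamond}_{n,3}$ and then reduce everything to the modules already computed in Lemmas \ref{Th32} and \ref{le33}. After deleting $Q$, the surviving vertices are $x_3,\dots,x_{n-2}$, $y_3,\dots,y_{n-2}$ and \emph{all} of $z_1,\dots,z_n$. Tracing which generators of $I(\mathcal{C}_{n,3})$ have both endpoints outside $Q$ shows three things: the indices $3,\dots,n-2$ carry an honest copy of $\mathcal{P}_{n-4,3}$; the $z$-layer still closes up into the cycle $z_1z_2\cdots z_nz_1$; and the only further surviving edges are $z_2y_3$ and $z_{n-1}y_{n-2}$. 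Consequently $z_1$ and $z_n$ are pure connector vertices (adjacent only to $z$'s), while $z_2$ and $z_{n-1}$ behave exactly as single-star vertices: adjoining $z_2$ to the core produces $\mathcal{P}^{\star}_{n-4,3}$, and adjoining both $z_2$ and $z_{n-1}$ produces $\mathcal{P}^{\star\star}_{n-4,3}$. I would record these identifications first, since everything downstream rests on them, and note that $n\ge 6$ gives $n-4\ge 2$, so Lemmas \ref{Th32} and \ref{le33} always apply and no separate base cases are needed.

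For the upper bound I would invoke \cite[Proposition 2.7]{MC}: since $z_n\notin I(\mathcal{C}^{\diamond}_{n,3})$ we have $\sdepth(S^{\diamond}_{n,3}/I(\mathcal{C}^{\diamond}_{n,3}))\le \sdepth(S^{\diamond}_{n,3}/(I(\mathcal{C}^{\diamond}_{n,3}):z_n))$. Colon by $z_n$ kills its two cyclic neighbours $z_1,z_{n-1}$ and frees $z_n$; after deleting $z_1,z_{n-1}$ the remaining graph is the core together with $z_2$, so that $S^{\diamond}_{n,3}/(I(\mathcal{C}^{\diamond}_{n,3}):z_n)\cong (S^{\star}_{n-4,3}/I(\mathcal{P}^{\star}_{n-4,3}))[z_n]$. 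By Lemmas \ref{Th32} and \ref{111} this has Stanley depth $\lceil\frac{n-3}{3}\rceil+1=\lceil\frac{n}{3}\rceil$, which is the claimed upper bound.

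For the lower bound I would run two short exact sequences and apply Lemma \ref{le1}. First cut on $z_n$:
\[
0\to S^{\diamond}_{n,3}/(I(\mathcal{C}^{\diamond}_{n,3}):z_n)\xrightarrow{\;\cdot z_n\;} S^{\diamond}_{n,3}/I(\mathcal{C}^{\diamond}_{n,3})\to S^{\diamond}_{n,3}/(I(\mathcal{C}^{\diamond}_{n,3}),z_n)\to 0.
\]
The left-hand term again has Stanley depth $\lceil\frac{n}{3}\rceil$ by the computation above. For the right-hand term I would cut once more on $z_1$: colon by $z_1$ reduces (after removing $z_2$) to $\mathcal{P}^{\star}_{n-4,3}$ with $z_1$ free, giving $(S^{\star}_{n-4,3}/I(\mathcal{P}^{\star}_{n-4,3}))[z_1]$ of Stanley depth $\lceil\frac{n}{3}\rceil$, while setting $z_1=0$ leaves the core with both $z_2$ and $z_{n-1}$, i.e. $S^{\star\star}_{n-4,3}/I(\mathcal{P}^{\star\star}_{n-4,3})$ of Stanley depth $\lceil\frac{n-2}{3}\rceil$ by Lemmas \ref{le33} and \ref{111}. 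Two applications of Lemma \ref{le1} then yield $\sdepth(S^{\diamond}_{n,3}/I(\mathcal{C}^{\diamond}_{n,3}))\ge \min\{\lceil\frac{n}{3}\rceil,\lceil\frac{n-2}{3}\rceil\}=\lceil\frac{n-2}{3}\rceil$. Finally, when $n\equiv 0\,(\mod 3)$ one has $\lceil\frac{n-2}{3}\rceil=\lceil\frac{n}{3}\rceil$, so the lower and upper bounds coincide and force equality; for the other residues the two bounds are exactly $\lceil\frac{n-2}{3}\rceil$ and $\lceil\frac{n}{3}\rceil$, which is the stated range.

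The main obstacle will be the bookkeeping in step one: verifying rigorously that deleting the cyclic neighbours of $z_n$ (and then of $z_1$) leaves precisely the single- and double-star extensions of the core $\mathcal{P}_{n-4,3}$, and that the boundary indices $2$ and $n-1$ contribute no stray edges beyond $z_2y_3$ and $z_{n-1}y_{n-2}$, so that the relabelled colon and quotient ideals really are $I(\mathcal{P}^{\star}_{n-4,3})$ and $I(\mathcal{P}^{\star\star}_{n-4,3})$ (together with the correct number of free variables). Once these isomorphisms are confirmed, the remaining depth/Stanley-depth arithmetic is routine.
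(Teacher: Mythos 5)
Your proposal is correct and follows essentially the same argument as the paper: the paper also uses two successive short exact sequences on the end variables of the $z$-cycle (it cuts on $z_1$ first and then $z_n$, the mirror image of your order), identifies the colon and quotient modules with $S^{\star}_{n-4,3}/I(\mathcal{P}^{\star}_{n-4,3})$ and $S^{\star\star}_{n-4,3}/I(\mathcal{P}^{\star\star}_{n-4,3})$ via Lemmas \ref{Th32} and \ref{le33}, and obtains the upper bound from \cite[Proposition 2.7]{MC} applied to the same colon ideal. The only difference is your more explicit upfront verification of the graph-theoretic structure of $\mathcal{C}^{\diamond}_{n,3}$, which the paper leaves implicit in its ideal computations.
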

\begin{proof}
Consider the short exact sequence
\begin{equation}\label{es10}
0\longrightarrow S^{\diamond}_{n,3}/(I(C^{\diamond}_{n,3}):z_{1})\xrightarrow{\cdot z_{1}} S^{\diamond}_{n,3}/I(C^{\diamond}_{n,3})\longrightarrow S^{\diamond}_{n,3}/(I(C^{\diamond}_{n,3}),z_{1})\longrightarrow 0,
\end{equation}
by Lemma \ref{le1}
\begin{equation*}
\sdepth(S^{\diamond}_{n,3}/I(C^{\diamond}_{n,3}))\geq \min\{\sdepth(S^{\diamond}_{n,3}/(I(C^{\diamond}_{n,3}):z_{1})), \sdepth(S^{\diamond}_{n,3}/(I(C^{\diamond}_{n,3}),z_{1}))\}.
\end{equation*}
\begin{multline*}
\text{As\,\,}(I(C^{\diamond}_{n,3}):z_{1})=((\cup^{n-3}_{i=3}\{x_{i}y_{i},x_{i}y_{i+1},x_{i}x_{i+1},x_{i+1}y_{i},y_{i}y_{i+1},y_{i}z_{i},y_{i}z_{i+1},y_{i+1}z_{i},z_{i}z_{i+1}\},\\x_{n-2}y_{n-2},y_{n-2}z_{n-2}),y_{n-2}z_{n-1},z_{n-2}z_{n-1},z_{2},z_{n}\big)
= (I(\mathcal{P}^{\star}_{n-4,3}),z_{2},z_{n}),
\end{multline*}
so we have
$S^{\diamond}_{n,3}/(I(C^{\diamond}_{n,3}):z_{1})\cong S^{\star}_{n-4,3}/I(\mathcal{P}^{\star}_{n-4,3})[z_{1}].$ Therefore, by Lemmas \ref{111} and \ref{Th32},
$$\sdepth(S^{\diamond}_{n,3}/(I(C^{\diamond}_{n,3}):z_{1}))=\lceil\frac{n-4+1}{3}\rceil+1=\lceil\frac{n}{3}\rceil.$$
Now suppose that
\begin{multline*}
$$B:=(I(C^{\diamond}_{n,3}),z_{1})=((\cup^{n-3}_{i=3}\{x_{i}y_{i},x_{i}y_{i+1},x_{i}x_{i+1},x_{i+1}y_{i},y_{i}y_{i+1},y_{i}z_{i},y_{i}z_{i+1},y_{i+1}z_{i},z_{i}z_{i+1}\},
\\x_{n-2}y_{n-2},y_{n-2}z_{n-2}),y_{n-2}z_{n-1},z_{n-2}z_{n-1},z_{n-1}z_{n},y_{3}z_{2},z_{2}z_{3},z_{1}\big)$$,
\end{multline*}
Applying Lemma \ref{le1} on the following short exact sequence
$$0\longrightarrow S^{\diamond}_{n,3}/(B:z_{n})\xrightarrow{\cdot z_{n}} S^{\diamond}_{n,3}/B\longrightarrow S^{\diamond}_{n,3}/(B,z_{n})\longrightarrow 0,$$
we have
$\sdepth(S^{\diamond}_{n,3}/B)\geq \min\{\sdepth(S^{\diamond}_{n,3}/(B:z_{n})), \sdepth(S^{\diamond}_{n,3}/(B,z_{n}))\}.$
\begin{multline*}
$$(B:z_{n})=((\cup^{n-3}_{i=3}\{x_{i}y_{i},x_{i}y_{i+1},x_{i}x_{i+1},x_{i+1}y_{i},y_{i}y_{i+1},y_{i}z_{i},y_{i}z_{i+1},y_{i+1}z_{i},z_{i}z_{i+1}\},
\\x_{n-2}y_{n-2},y_{n-2}z_{n-2}),y_{3}z_{2},z_{2}z_{3},z_{1},z_{n-1}\big)
= (I(\mathcal{P}^{\star}_{n-4,3}),z_{1},z_{n-1}),$$
 \end{multline*}
so we have
$S^{\diamond}_{n,3}/(B:z_{n})\cong S^{\star}_{n-4,3}/I(\mathcal{P}^{\star}_{n-4,3})[z_{n}].$ Therefore by Lemmas \ref{111} and \ref{Th32},
$\sdepth(S^{\diamond}_{n,3}/(B:z_{n}))=\lceil\frac{n-4+1}{3}\rceil+1=\lceil\frac{n}{3}\rceil.$
Now
\begin{multline*}
$$(B,z_{n})=((\bigcup^{n-3}_{i=3}\{x_{i}y_{i},x_{i}y_{i+1},x_{i}x_{i+1},x_{i+1}y_{i},y_{i}y_{i+1},y_{i}z_{i},y_{i}z_{i+1},y_{i+1}z_{i},z_{i}z_{i+1}\},
\\x_{n-2}y_{n-2},y_{n-2}z_{n-2}),y_{n-2}z_{n-1},z_{n-2}z_{n-1},y_{3}z_{2},z_{2}z_{3},z_{1},z_{n}\big)= (I(\mathcal{P}^{\star\star}_{n-4,3}),z_{1},z_{n}),$$
\end{multline*}
 thus we have
$S^{\diamond}_{n,3}/(B,z_{n})\cong S^{\star\star}_{n-4,3}/I(\mathcal{P}^{\star\star}_{n-4,3}).$ Therefore by Lemma \ref{le33}
$\sdepth(S^{\diamond}_{n,3}/(B,z_{n}))=\lceil\frac{n-4+2}{3}\rceil=\lceil\frac{n-2}{3}\rceil.$
For upper bound, as $z_{1}\notin I(C^{\diamond}_{n,3})$ so by \cite[Proposition 2.7]{MC} $$\sdepth(S^{\diamond}_{n,3}/I(C^{\diamond}_{n,3}))\leq \sdepth(S^{\diamond}_{n,3}/(I(C^{\diamond}_{n,3}):z_{1})).$$ Since $(S^{\diamond}_{n,3}/(I(C^{\diamond}_{n,3}):z_{1}))\cong (S^{\star}_{n-4,3}/I(\mathcal{P}^{\star}_{n-4,3}))[z_{1}]$. Thus by Lemmas \ref{111} and \ref{Th32},  $$\sdepth(S^{\diamond}_{n,3}/I(C^{\diamond}_{n,3}))\leq\lceil\frac{n}{3}\rceil,$$
if $n\equiv 0(\mod 3)$ then $\lceil\frac{n-2}{3}\rceil=\lceil\frac{n}{3}\rceil$.
If $n\equiv 1(\mod 3)$ or $n\equiv 2(\mod 3)$ then $$\lceil\frac{n-2}{3}\rceil\leq\sdepth(S^{\diamond}_{n,3}/I(C^{\diamond}_{n,3}))\leq \lceil\frac{n}{3}\rceil.$$
\end{proof}

\begin{Proposition}\label{Pro7}
 Let $n\geq 3$, then $\sdepth({I(\mathcal{C}_{n,3})}/{I(\mathcal{P}_{n,3})})\geq\lceil\frac{n+2}{3}\rceil.$
\end{Proposition}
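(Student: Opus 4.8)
The plan is to compute a Stanley decomposition of the quotient $I(\mathcal{C}_{n,3})/I(\mathcal{P}_{n,3})$ directly, by organizing the monomial generators that lie in $I(\mathcal{C}_{n,3})$ but not in $I(\mathcal{P}_{n,3})$. From the generator descriptions in the excerpt, $\mathcal{G}(I(\mathcal{C}_{n,3}))\setminus\mathcal{G}(I(\mathcal{P}_{n,3}))$ consists precisely of the ``wrap-around'' edges $x_1y_n,x_1x_n,y_1x_n,y_1y_n,y_1z_n,z_1y_n,z_1z_n$. These involve only the boundary variables indexed $1$ and $n$, so the decomposition should split as a direct sum indexed by these wrap-around monomials, with each summand carrying a polynomial ring in an ``interior'' set of variables modulo the edge relations of a path-type graph on the middle vertices.

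First I would treat the small cases $n=3,4,5$ by writing down an explicit Stanley decomposition by hand, exactly as in Proposition \ref{Pro6}; here one can read off the Stanley depth directly and check it is at least $\lceil\frac{n+2}{3}\rceil$. For the general case $n\geq 6$, the key step is to establish a $K$-vector space isomorphism expressing $I(\mathcal{C}_{n,3})/I(\mathcal{P}_{n,3})$ as a direct sum of summands of the form $w\cdot\bigl(S^{\diamond}_{n,3}/I(C^{\diamond}_{n,3})\bigr)[\text{boundary variables}]$, where $w$ runs over the wrap-around generators and $S^{\diamond}_{n,3}/I(C^{\diamond}_{n,3})$ is the quotient associated to the interior subgraph $\mathcal{C}^{\diamond}_{n,3}$ defined just before Lemma \ref{The425}. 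This is exactly the reason that interior subgraph was introduced: colon-ing by a wrap-around monomial kills the boundary variables in $Q=\{x_1,y_1,x_2,y_2,x_n,y_n,x_{n-1},y_{n-1}\}$, and what remains on the middle vertices is governed by $I(C^{\diamond}_{n,3})$.

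Once the isomorphism is in place, I would apply Lemma \ref{le1} (the short-exact-sequence bound for Stanley depth) or, more directly, take the minimum over the summands, and invoke Lemma \ref{The425} together with Lemma \ref{111} to adjoin the extra boundary variables each wrap-around monomial brings along. Each summand then contributes $\sdepth\bigl(S^{\diamond}_{n,3}/I(C^{\diamond}_{n,3})\bigr)$ plus the number of adjoined free variables; using the bound $\sdepth(S^{\diamond}_{n,3}/I(C^{\diamond}_{n,3}))\geq\lceil\frac{n-2}{3}\rceil$ from Lemma \ref{The425} and accounting for the free boundary variables should yield the target $\lceil\frac{n+2}{3}\rceil$.

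The main obstacle I anticipate is bookkeeping the exact generator sets: verifying that colon ideals like $(I(\mathcal{C}_{n,3}):w)$ for each wrap-around monomial $w$ collapse cleanly so that the summands genuinely align with $I(C^{\diamond}_{n,3})$ and so that the free variables adjoined are counted correctly. One must be careful that the wrap-around edges at the two ends do not interact (they do not, since for $n\geq 6$ the index-$1$ and index-$n$ variables are far apart), and that no interior relation is accidentally lost or double-counted when the summands are assembled. Getting the count of adjoined variables right in each summand — so that the floor/ceiling arithmetic delivers $\lceil\frac{n+2}{3}\rceil$ rather than something smaller — is where I expect the delicate part of the argument to reside.
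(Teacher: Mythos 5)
Your high-level strategy (explicit decompositions for $n=3,4,5$, then for $n\geq 6$ a direct-sum decomposition indexed by the wrap-around generators, with each summand bounded via earlier lemmas plus Lemma \ref{111}) is the same as the paper's. But your key structural claim is wrong: it is \emph{not} true that coloning by each wrap-around monomial ``kills exactly the boundary set $Q$'' and leaves a quotient governed by $I(\mathcal{C}^{\diamond}_{n,3})$. That happens only for the two generators $x_1x_n$ and $z_1z_n$ (e.g., $x_1x_n$ annihilates $y_1,x_2,y_2,y_n,x_{n-1},y_{n-1}$ but leaves $z_1,z_2,z_{n-1},z_n$ alive, and the surviving edges, including the wrap edge $z_1z_n$, are exactly those of $\mathcal{C}^{\diamond}_{n,3}$). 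For the other five generators the kill-set is different: $y_1y_n$ annihilates \emph{all} of columns $1,2,n-1,n$, including the $z$-variables (since $y_1z_1,y_1z_2,y_nz_{n-1},y_nz_n$ are path edges), so that summand is $y_1y_n\frac{R}{U}[y_1,y_n]$ with $R/U\cong S_{n-4,3}/I(\mathcal{P}_{n-4,3})$, a path-type quotient handled by Lemma \ref{Th31}; similarly $x_1y_n,\,z_1y_n,\,y_1x_n,\,y_1z_n$ leave exactly one extra corner variable alive and give quotients isomorphic to $S^{\star}_{n-4,3}/I(\mathcal{P}^{\star}_{n-4,3})$, handled by Lemma \ref{Th32}. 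The paper's bound is
$\min\{\lceil\frac{n-4}{3}\rceil+2,\ \lceil\frac{n-3}{3}\rceil+2,\ \lceil\frac{n-2}{3}\rceil+2\}=\lceil\frac{n-4}{3}\rceil+2=\lceil\frac{n+2}{3}\rceil$,
and the binding summand is precisely the path-type one for $y_1y_n$ --- the one your scheme misidentifies. Indeed, had your uniform claim been correct, every summand would contribute at least $\lceil\frac{n-2}{3}\rceil+2=\lceil\frac{n+4}{3}\rceil$, yielding a strictly stronger bound than the proposition asserts; that discrepancy is a sign the claimed isomorphisms cannot hold.

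A second inaccuracy: the wrap-around edges do interact, since all seven of them join column-$1$ to column-$n$ variables. Monomials such as $x_1z_1y_n$ or $x_1x_nz_1z_n$ lie in $I(\mathcal{C}_{n,3})\setminus I(\mathcal{P}_{n,3})$ and are divisible by two wrap-around generators simultaneously ($x_1z_1$ and $x_nz_n$ are non-edges in the strong product), so assigning such monomials to a unique summand is exactly the delicate bookkeeping step --- it cannot be dismissed by saying the two ends are ``far apart.'' Your plan as stated would therefore need substantial repair: you must compute the kill-set of each wrap-around generator separately, identify the three distinct types of quotients that arise (path type via Lemma \ref{Th31}, starred path type via Lemma \ref{Th32}, and $\mathcal{C}^{\diamond}$ type via Lemma \ref{The425}), and specify a partition rule for monomials divisible by several wrap-around generators, before the minimum over summands can be taken.
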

\begin{proof}
For $3\leq n \leq 4,$ as the minimal generators of ${I(\mathcal{C}_{n,3})}/{I(\mathcal{P}_{n,3})}$ have degree 2, so by \cite[Lemma 2.1]{KS} $\sdepth ({I(\mathcal{C}_{n,3})}/{I(\mathcal{P}_{n,3})})\geq2=\lceil\frac{n+2}{3}\rceil.$
\text{If }$n=5$ then we use \cite{HVZ} to show that there exist Stanley decompositions of desired Stanley depth.
Let
\begin{eqnarray*}
H:=x_{1}x_{5}K[x_{1},x_{3},x_{5}]\oplus x_{1}y_{5}K[x_{1},x_{3},y_{5}]\oplus y_{1}x_{5}K[x_{3},x_{5},y_{1}]\oplus y_{1}y_{5}K[x_{3},y_{1},y_{5}]\\ \oplus z_{1}y_{5}K[x_{3},y_{5},z_{1}]\oplus z_{1}z_{5}K[z_{1},z_{3},z_{5}]\oplus y_{1}z_{5}K[y_{1},y_{3},z_{5}]
\end{eqnarray*}
Clearly, $H\subset {I(\mathcal{C}_{5,3})}/{I(\mathcal{P}_{5,3})}$. Let $v\in {I(\mathcal{C}_{5,3})}/{I(\mathcal{P}_{5,3})}$ be a sqaurefree monomial such that $v\notin H$ then $\deg (v)\geq 3$. Since
\begin{eqnarray*}\label{eq22}
 {I(\mathcal{C}_{5,3})}/{I(\mathcal{P}_{5,3})}=H\oplus_{v} v K[\supp(v)],
\end{eqnarray*}
Thus we have $\sdepth(I(\mathcal{C}_{5,3})/I(\mathcal{P}_{5,3}))\geq 3=\lceil\frac{5+2}{3}\rceil.$ Now for $n\geq6$, let $$U:=(\cup^{n-3}_{i=3}\{x_{i}y_{i},x_{i}y_{i+1},x_{i}x_{i+1},x_{i+1}y_{i},y_{i}y_{i+1},y_{i}z_{i},y_{i}z_{i+1},y_{i+1}z_{i},z_{i}z_{i+1}\},
x_{n-2}y_{n-2},y_{n-2}z_{n-2})$$ be a squarefree monomial ideal of $R:=K[x_3,\dots,x_{n-2},y_3,\dots,y_{n-2},z_3,\dots,z_{n-2}]$. Then we have the following $K$-vector space isomorphism:
{\begin{eqnarray*}
{I(\mathcal{C}_{n,3})}/{I(\mathcal{P}_{n,3})}\cong   y_{1}y_{n}\frac{R}{U}[y_{1},y_{n}]\oplus x_{1}y_{n}\frac{R[z_2]}{\big(\mathcal{G}(U),y_{3}z_{2},z_{2}z_{3}\big)}[x_{1},y_{n}]\oplus  z_{1}y_{n}\frac{R[x_2]}{\big(\mathcal{G}(U),y_{3}x_{2},x_{2}x_{3}\big)}[z_{1},y_{n}]\\\oplus
y_{1}x_{n}\frac{R[z_{n-1}]}{\big(\mathcal{G}(U),y_{n-2}z_{n-1},z_{n-2}z_{n-1}\big)}[y_{1},x_{n}]\oplus
y_{1}z_{n}\frac{R[x_{n-1}]}{\big(\mathcal{G}(U),y_{n-2}x_{n-1},x_{n-2}x_{n-1}\big)}[y_{1},z_{n}]\\\oplus
x_{1}x_{n}\frac{R[z_{1},z_{2},z_{n-1},z_{n}]}
{\big(\mathcal{G}(U),y_{n-2}z_{n-1},z_{n-2}z_{n-1},z_{n-1}z_{n},z_{n}z_{1},z_{1}z_{2},y_{3}z_{2},z_{2}z_{3}\big)}[x_{1},x_{n}]\\\oplus
z_{1}z_{n}\frac{R[x_{1},x_{2},x_{n-1},x_{n}]}
{\big(\mathcal{G}(U),y_{n-2}x_{n-1},x_{n-2}x_{n-1},x_{n-1}x_{n},x_{n}x_{1},x_{1}x_{2},y_{3}x_{2},x_{2}x_{3}\big)}[z_{1},z_{n}].
\end{eqnarray*}}
Clearly we can see that $R/U\cong S_{n-4,3}/I(\mathcal{P}_{n-4,3})$,
\begin{multline*}
\frac{R[z_2]}{\big(\mathcal{G}(U),y_{3}z_{2},z_{2}z_{3}\big)}\cong \frac{R[x_2]}{\big(\mathcal{G}(U),y_{3}x_{2},x_{2}x_{3}\big)}\cong
\frac{R[z_{n-1}]}{\big(\mathcal{G}(U),y_{n-2}z_{n-1},z_{n-2}z_{n-1}\big)} \\\cong \frac{R[x_{n-1}]}{\big(\mathcal{G}(U),y_{n-2}x_{n-1},x_{n-2}x_{n-1}\big)}\cong S^{\star}_{n-4,3}/I(\mathcal{P}^{\star}_{n-4,3}),
\end{multline*}
and
\begin{multline*}
\frac{R[z_{1},z_{2},z_{n-1},z_{n}]}
{\big(\mathcal{G}(U),y_{n-2}z_{n-1},z_{n-2}z_{n-1},z_{n-1}z_{n},z_{n}z_{1},z_{1}z_{2},y_{3}z_{2},z_{2}z_{3}\big)}\\\cong\frac{R[x_{1},x_{2},x_{n-1},x_{n}]}
{\big(\mathcal{G}(U),y_{n-2}x_{n-1},x_{n-2}x_{n-1},x_{n-1}x_{n},x_{n}x_{1},x_{1}x_{2},y_{3}x_{2},x_{2}x_{3}\big)}\cong S^{\diamond}_{n,3}/I(\mathcal{C}^{\diamond}_{n,3}).
\end{multline*}
Thus by Lemmas \ref{Th31}, \ref{Th32} and \ref{The425}, we have\\
$$\sdepth({I(\mathcal{C}_{n,3})}/{I(\mathcal{P}_{n,3})})\geq\min\Big\{\lceil\frac{n-4}{3}\rceil+2,\lceil\frac{n-4+1}{3}\rceil+2,
\lceil\frac{n-2}{3}\rceil+2\Big\}= \lceil\frac{n+2}{3}\rceil.$$
\end{proof}
\begin{Theorem}
Let $1\leq m\leq 3$, $n\geq 3$, then $\sdepth(I(\mathcal{C}_{n,m}))\geq \sdepth(S_{n,m}/I(\mathcal{C}_{n,m}))$.
\end{Theorem}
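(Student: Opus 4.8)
The plan is to prove the uniform lower bound $\sdepth(I(\mathcal{C}_{n,m}))\geq\lceil\frac{n}{3}\rceil$ for every $m\in\{1,2,3\}$ and $n\geq 3$, and then to observe that the quotient always satisfies $\sdepth(S_{n,m}/I(\mathcal{C}_{n,m}))\leq\lceil\frac{n}{3}\rceil$; combining the two inequalities yields the claim. The upper bound on the quotient is already recorded above: it is the content of Remark \ref{case1} for $m=1$, of Corollary \ref{cor6} for $m=2$, and of the estimate $\sdepth\leq\lceil\frac{n}{3}\rceil$ established inside Theorem \ref{The4} for $m=3$. Thus everything reduces to the lower bound for the edge ideal of the cycle product.

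To obtain that lower bound I would exploit the inclusion $I(\mathcal{P}_{n,m})\subset I(\mathcal{C}_{n,m})$ coming from the fact that $\mathcal{P}_{n,m}$ is a spanning subgraph of $\mathcal{C}_{n,m}$ (their generating sets differ only by the extra cyclic edges). This gives the short exact sequence of $\ZZ^{n}$-graded modules
$$0\longrightarrow I(\mathcal{P}_{n,m})\longrightarrow I(\mathcal{C}_{n,m})\longrightarrow I(\mathcal{C}_{n,m})/I(\mathcal{P}_{n,m})\longrightarrow 0,$$
to which Lemma \ref{le1} applies and yields
$$\sdepth(I(\mathcal{C}_{n,m}))\geq\min\{\sdepth(I(\mathcal{P}_{n,m})),\ \sdepth(I(\mathcal{C}_{n,m})/I(\mathcal{P}_{n,m}))\}.$$
It therefore suffices to bound both terms on the right by $\lceil\frac{n}{3}\rceil$.

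For the first term, the Stanley depth of the path-product ideal has already been shown to exceed $\lceil\frac{n}{3}\rceil$: this is Remark \ref{path1} for $m=1$, Theorem \ref{th3} for $m=2$, and Theorem \ref{th11} for $m=3$, so in each case $\sdepth(I(\mathcal{P}_{n,m}))\geq\lceil\frac{n}{3}\rceil$. For the second term, Propositions \ref{Pro6} and \ref{Pro7} give $\sdepth(I(\mathcal{C}_{n,m})/I(\mathcal{P}_{n,m}))\geq\lceil\frac{n+2}{3}\rceil\geq\lceil\frac{n}{3}\rceil$ when $m=2,3$. The only term not yet covered is the quotient in the case $m=1$, which I would compute directly in the spirit of Propositions \ref{Pro6} and \ref{Pro7}. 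Since $I(\mathcal{C}_{n,1})=I(\mathcal{P}_{n,1})+(x_1x_n)$ with $x_1x_n$ the single extra generator, peeling it off gives $I(C_n)/I(P_n)\cong\big(K[x_3,\dots,x_{n-2}]/I(P_{n-4})\big)[x_1,x_n]$, so Lemma \ref{111} together with the path formula $\sdepth(K[\,\cdot\,]/I(P_{k}))=\lceil\frac{k}{3}\rceil$ gives $\sdepth(I(C_n)/I(P_n))=\lceil\frac{n-4}{3}\rceil+2=\lceil\frac{n+2}{3}\rceil\geq\lceil\frac{n}{3}\rceil$.

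Putting these together, $\min\{\sdepth(I(\mathcal{P}_{n,m})),\sdepth(I(\mathcal{C}_{n,m})/I(\mathcal{P}_{n,m}))\}\geq\lceil\frac{n}{3}\rceil$, hence $\sdepth(I(\mathcal{C}_{n,m}))\geq\lceil\frac{n}{3}\rceil\geq\sdepth(S_{n,m}/I(\mathcal{C}_{n,m}))$, which is exactly the desired inequality confirming Conjecture \ref{C1} for these ideals. I expect the only genuinely delicate point to be the $m=1$ quotient computation together with the very small cases $n=3,4$, where the middle path $P_{n-4}$ degenerates; these are settled by writing out the explicit Stanley decomposition of $x_1x_n\cdot K[\,\cdot\,]$, just as the small cases were handled in Propositions \ref{Pro6} and \ref{Pro7}. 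Every other ingredient is quotable verbatim from the results proved earlier in the paper.
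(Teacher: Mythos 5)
Your proposal is correct, and for $m=2,3$ it is essentially the paper's own proof: the same short exact sequence $0\to I(\mathcal{P}_{n,m})\to I(\mathcal{C}_{n,m})\to I(\mathcal{C}_{n,m})/I(\mathcal{P}_{n,m})\to 0$ combined with Lemma \ref{le1}, with the first term bounded below via Remark \ref{path1} and Theorems \ref{th3}, \ref{th11}, the second via Propositions \ref{Pro6} and \ref{Pro7}, and the quotient ring bounded above via Remark \ref{case1}, Corollary \ref{cor6} and Theorem \ref{The4}. The genuine divergence is the case $m=1$: the paper disposes of it by quoting external results, namely \cite[Theorem 1.9]{MC4} together with \cite[Theorem 2.3]{O}, whereas you keep the exact-sequence strategy uniform in $m$ and supply the missing ingredient $\sdepth(I(C_n)/I(P_n))$ yourself. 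Your computation is sound: a monomial $u$ lies in $I(C_n)\setminus I(P_n)$ precisely when $x_1x_n\mid u$, $x_2\nmid u$, $x_{n-1}\nmid u$, and the part of $u$ supported on $\{x_3,\dots,x_{n-2}\}$ avoids $I(P_{n-4})$, which gives the $\ZZ^n$-graded isomorphism $I(C_n)/I(P_n)\cong x_1x_n\bigl(K[x_3,\dots,x_{n-2}]/I(P_{n-4})\bigr)[x_1,x_n]$ and hence $\sdepth(I(C_n)/I(P_n))=\lceil\frac{n-4}{3}\rceil+2=\lceil\frac{n+2}{3}\rceil$, exactly parallel to the decompositions in Propositions \ref{Pro6} and \ref{Pro7}; the degenerate cases $n\leq 5$, where the middle factor collapses to $K$ or $K[x_3]$, still satisfy this formula. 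What your route buys is a self-contained, uniform treatment of all three values of $m$; what the paper's citation buys is brevity. A further small point in your favor: you make explicit the final comparison $\sdepth(I(\mathcal{C}_{n,m}))\geq\lceil\frac{n}{3}\rceil\geq\sdepth(S_{n,m}/I(\mathcal{C}_{n,m}))$, whereas the paper stops after its two lower bounds (which give $\sdepth(I(\mathcal{C}_{n,m}))\geq\lceil\frac{n-1}{3}\rceil+1$) and leaves the comparison with the upper bound for the quotient implicit.
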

\begin{proof}
For $m=1$, $I(\mathcal{C}_{n,1})=C_n$. Then the result follows by \cite[Theorem 1.9]{MC4} and \cite[Theorem 2.3]{O}.
If $m=2 \text{\,\,or\,\,}3$, consider the short exact sequence $$0\longrightarrow I(\mathcal{P}_{n,m})\longrightarrow I(\mathcal{C}_{n,m})\longrightarrow I(\mathcal{C}_{n,m})/I(\mathcal{P}_{n,m})\longrightarrow 0,$$
then by Lemma \ref{le1}, $\sdepth (I(\mathcal{C}_{n,m}))\geq \min\{\sdepth (I(\mathcal{P}_{n,m})),\sdepth (I(\mathcal{C}_{n,m})/I(\mathcal{P}_{n,m}))\}.$
By Propositions \ref{th3} and \ref{th11}, we have $\sdepth (I(\mathcal{P}_{n,m}))\geq \lceil\frac{n}{3}\rceil+1,$ and by Propositions \ref{Pro6} and \ref{Pro7}, we have
$\sdepth(I(\mathcal{C}_{n,m})/I(\mathcal{P}_{n,m}))\geq \lceil\frac{n+2}{3}\rceil=\lceil\frac{n-1}{3}\rceil+1,$
this completes the proof.
\end{proof}


\section[Upper bounds for depth and Stanley depth of cyclic modules associated to $\mathcal{P}_{n,m}$ and $\mathcal{C}_{n,m}$,]{Upper bounds for depth and Stanley depth of cyclic modules associated to $\mathcal{P}_{n,m}$ and $\mathcal{C}_{n,m}$}
Let $m\leq n$, in general we don't know the values of depth and Stanley depth of $S_{n,m}/I(\mathcal{P}_{n,m})$. However, in the light of our observations we propose the following open question.
\begin{Question}\label{Q1}
Is $\depth(S_{n,m}/I(\mathcal{P}_{n,m}))=\sdepth(S_{n,m}/I(\mathcal{P}_{n,m}))=\lceil\frac{n}{3}\rceil\lceil\frac{m}{3}\rceil?$
\end{Question}
Let $n\geq 2$, we have confirmed this question for the cases when $1\leq m\leq 3$ see Remark \ref{case1}, Lemma \ref{The5} and Lemma \ref{Th31}. If $m=4$, we make some calculations for depth and Stanley depth  by using CoCoA, (for sdepth we use SdepthLib:coc \cite{GR}). Calculations show that $\depth(S_{4,4}/I(\mathcal{P}_{4,4}))=\sdepth(S_{4,4}/I(\mathcal{P}_{4,4}))=4=\lceil\frac{4}{3}\rceil\lceil\frac{4}{3}\rceil,$ $\sdepth(S_{5,4}/I(\mathcal{P}_{5,4}))=4=\lceil\frac{5}{4}\rceil\lceil\frac{4}{3}\rceil,$ and  $\sdepth(S_{6,4}/I(\mathcal{P}_{6,4}))=4=\lceil\frac{6}{3}\rceil\lceil\frac{4}{3}\rceil.$ The following theorem gives a partial answer to the Question \ref{Q1}.

\begin{Theorem}\label{Th323}
Let $n\geq 2$, then $\depth(S_{n,m}/I(\mathcal{P}_{n,m})), \sdepth(S_{n,m}/I(\mathcal{P}_{n,m})) \leq\lceil\frac{n}{3}\rceil\lceil\frac{m}{3}\rceil.$ \end{Theorem}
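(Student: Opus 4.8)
The plan is to exploit the colon-ideal upper bounds that are used throughout this paper: for a variable $x\notin I$ one has $\depth(S/I)\le\depth(S/(I:x))$ by \cite[Corollary 1.3]{AR1} and $\sdepth(S/I)\le\sdepth(S/(I:x))$ by \cite[Proposition 2.7]{MC}. My aim is to produce a single monomial $v$ with $v\notin I(\mathcal{P}_{n,m})$ for which $S_{n,m}/(I(\mathcal{P}_{n,m}):v)$ is a polynomial ring in exactly $\lceil\frac{n}{3}\rceil\lceil\frac{m}{3}\rceil$ variables. Since the depth and the Stanley depth of a polynomial ring both equal its number of variables, the two inequalities then give the claimed bound simultaneously.

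Recall that in the strong product $P_n\boxtimes P_m$ the vertices $x_{ij}$ and $x_{i'j'}$ are adjacent precisely when $|i-i'|\le 1$, $|j-j'|\le 1$ and $(i,j)\ne(i',j')$; thus $\mathcal{P}_{n,m}$ is the king graph on the $n\times m$ grid and the closed neighbourhood of $x_{ij}$ is the $3\times 3$ block centred at $(i,j)$. First I would fix an independent dominating set $D_n\subseteq\{1,\dots,n\}$ of the path $P_n$ of cardinality $\lceil\frac{n}{3}\rceil$, for instance $D_n:=\{\,i:1\le i\le n,\ i\equiv 2\,(\mod 3)\,\}$ enlarged by $\{n\}$ when $n\equiv 1\,(\mod 3)$; a short check over the three residues of $n$ modulo $3$ shows that $D_n$ is independent in $P_n$ and dominates every vertex. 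Setting $W:=\{\,x_{ij}:i\in D_n,\ j\in D_m\,\}$ and $v:=\prod_{x_{ij}\in W}x_{ij}$, the product structure makes $W$ an independent dominating set of the king graph of size $|D_n|\,|D_m|=\lceil\frac{n}{3}\rceil\lceil\frac{m}{3}\rceil$: two distinct elements of $W$ differ by at least $2$ in some coordinate and so are non-adjacent, while any vertex lies within one step of a $D_n$-row and a $D_m$-column at once, hence is dominated.

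Next I would compute the colon ideal. Because $W$ is independent, the squarefree monomial supported on any subset of $W$ is never divisible by an edge generator; hence $v\notin I(\mathcal{P}_{n,m})$, and enumerating the variables of $v$ one at a time, each is absent from the successive colon ideal, which legitimises iterating the single-variable inequalities above until we reach $(I(\mathcal{P}_{n,m}):v)$. For every vertex $x_{ab}$ adjacent to some $x_{ij}\in W$ we have $x_{ab}x_{ij}\in I(\mathcal{P}_{n,m})$ with $x_{ij}\mid v$, so $x_{ab}\in(I(\mathcal{P}_{n,m}):v)$; since $W$ dominates, every vertex outside $W$ is such a neighbour, whence $(I(\mathcal{P}_{n,m}):v)=(x_{ab}:x_{ab}\notin W)$ and every edge generator is redundant. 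Therefore $S_{n,m}/(I(\mathcal{P}_{n,m}):v)\cong K[x_{ij}:x_{ij}\in W]$ is a polynomial ring in $\lceil\frac{n}{3}\rceil\lceil\frac{m}{3}\rceil$ variables, whose depth and Stanley depth both equal $\lceil\frac{n}{3}\rceil\lceil\frac{m}{3}\rceil$ (building them up by Lemma \ref{111} if desired), and the two upper bounds follow.

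The algebra here is routine; the main obstacle is the combinatorial core, namely producing an independent dominating set of the king graph of exactly the minimal size $\lceil\frac{n}{3}\rceil\lceil\frac{m}{3}\rceil$ and verifying both properties simultaneously. Independence is what guarantees $v\notin I(\mathcal{P}_{n,m})$ and annihilates all residual edges in the quotient, whereas domination is what forces the colon ideal to be generated purely by variables; the delicate point is that the correct choice of path representatives depends on $n$ and $m$ modulo $3$, so the boundary blocks must be handled separately. This is precisely why the naive residue class $i\equiv 1\,(\mod 3)$ does not work: it fails to dominate the last rows and columns when $n$ or $m\equiv 0\,(\mod 3)$.
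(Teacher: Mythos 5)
Your proof is correct, and it takes a genuinely different route from the paper. The paper proves Theorem \ref{Th323} by induction on $m$ (assuming $m\leq n$): it colons out a monomial $v$ supported on an independent dominating set of rows inside the \emph{single} column $m-1$, observes that $S_{n,m}/(I(\mathcal{P}_{n,m}):v)\cong (S_{n,m-3}/I(\mathcal{P}_{n,m-3}))[\supp(v)]$, and then invokes the inductive hypothesis together with Lemma \ref{111}; the base cases $1\leq m\leq 3$ rest on the exact computations of Section 3 (Remark \ref{case1}, Lemmas \ref{The5} and \ref{Th31}). You instead colon out, in one shot, a monomial supported on an independent dominating set $W$ of the whole king graph with $|W|=\lceil\frac{n}{3}\rceil\lceil\frac{m}{3}\rceil$: independence gives $v\notin I(\mathcal{P}_{n,m})$ and makes every edge generator redundant modulo the colon ideal, while domination forces $(I(\mathcal{P}_{n,m}):v)$ to be exactly the prime generated by the variables outside $W$, so the quotient is a polynomial ring whose depth and Stanley depth are both $|W|$; your verification of the colon computation and of the legitimacy of iterating the single-variable inequalities of \cite{AR1} and \cite{MC} is sound (the iteration is in fact unnecessary caution, since the paper applies these results directly to a monomial $v\notin I$). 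What your argument buys is self-containedness and uniformity: it needs no induction, no case distinction on $m$, and none of the Section 3 computations, and it actually establishes the more general principle that $\depth(S/I(G))$ and $\sdepth(S/I(G))$ are bounded above by the size of any independent dominating set of an arbitrary graph $G$. What the paper's inductive scheme buys is reusability for the cycle case: essentially the same reduction drives Theorem \ref{Th325}, where for $m\equiv 1,2\,(\mathrm{mod}\ 3)$ it yields the sharper bound $\lceil\frac{n-1}{3}\rceil+(\lceil\frac{m}{3}\rceil-1)\lceil\frac{n}{3}\rceil$ by feeding in the cycle base cases; a bound of that form is invisible to your method, which can only ever output cardinalities of independent dominating sets.
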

\begin{proof}
Without loss of generality we can assume that $m\leq n$. We first prove the result for depth. When $m=1$, then $I(\mathcal{P}_{n,1})=I({P}_{n})$, we have the required result by Remark \ref{case1}. For $m=2,3$ the result follows from  Lemmas \ref{The5} and \ref{Th31}, respectively. Let $m\geq4$, we will prove this result by induction on $m$. Let $v$ be a monomial such that
$$v:=\left\{
  \begin{array}{ll}
    x_{2(m-1)}x_{5(m-1)}\dots x_{(n-4)(m-1)}x_{(n-1)(m-1)}, & \hbox{if  $n\equiv 0(\mod 3)$}; \\
    x_{1(m-1)}x_{4(m-1)}\dots x_{(n-3)(m-1)}x_{n(m-1)},& \hbox{if  $n\equiv 1(\mod 3)$}; \\
    x_{2(m-1)}x_{5(m-1)} \dots x_{(n-3)(m-1)}x_{n(m-1)}, & \hbox{if  $n\equiv 2(\mod 3)$}.
  \end{array}
\right.$$
clearly $v\notin I(\mathcal{P}_{n,m})$ so by \cite[Corollary 1.3]{AR1} $$\depth(S_{n,m}/I(\mathcal{P}_{n,m}))\leq \depth(S_{n,m}/(I(\mathcal{P}_{{n,m}}):v)).$$
In all three cases $|\supp(v)|=\lceil\frac{n}{3}\rceil$ and $S_{n,m}/(I(\mathcal{P}_{{n,m}}):v)\cong (S_{n,m-3}/I(\mathcal{P}_{n,m-3}))[\supp(v)],$  so by induction and Lemma \ref{111}
$$\depth(S_{n,m}/I(\mathcal{P}_{{n,m}}))\leq\depth(S_{n,m}/(I(\mathcal{P}_{{n,m}}):v))\leq\lceil\frac{n}{3}\rceil\lceil\frac{m-3}{3}\rceil+\lceil\frac{n}{3}\rceil
=\lceil\frac{m}{3}\rceil\lceil\frac{n}{3}\rceil.$$
Similarly we can prove the result for sdepth by using \cite[Proposition 2.7]{MC}.
\end{proof}
\begin{Remark}
{\em For a positive answer to Question \ref{Q1} one needs to prove that $\lceil\frac{n}{3}\rceil\lceil\frac{m}{3}\rceil$ is a lower bound for depth and Stanley depth of $S_{n,m}/I(\mathcal{P}_{{n,m}})$. The lower bound $\lceil\frac{\diam(P_{n,m})+1}{3}\rceil$ (\cite[Theorems 3.1, 4.18]{FM}) which was helpful for the cases when $1\leq m\leq 3$ is no more useful if $m\geq4$. For instance,  $\depth(S_{4,4}/I(\mathcal{P}_{4,4}))=\sdepth(S_{4,4}/I(\mathcal{P}_{4,4}))=4$ but this lower bound shows that $\depth(S_{4,4}/I(\mathcal{P}_{4,4}))\geq 2=\lceil\frac{\diam(P_{4,4})+1}{3}\rceil$ and $\sdepth(S_{4,4}/I(\mathcal{P}_{4,4}))\geq 2=\lceil\frac{\diam(P_{4,4})+1}{3}\rceil$.
}
\end{Remark}
\begin{Theorem}\label{Th325}
Let $n\geq 3$ and $m\geq 1$, then
$$\depth(S_{n,m}/I(\mathcal{C}_{n,m}))\leq\left\{
  \begin{array}{ll}
\lceil\frac{n-1}{3}\rceil+(\lceil\frac{m}{3}\rceil-1)\lceil\frac{n}{3}\rceil, & \hbox{if $m\equiv 1,2(\mod 3)$;}\\
\lceil\frac{n}{3}\rceil\lceil\frac{m}{3}\rceil, & \hbox{if $m\equiv 0(\mod 3)$.}

  \end{array}
\right.$$
\end{Theorem}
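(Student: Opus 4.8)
The plan is to argue by induction on $m$, peeling off three path-layers at each step exactly as in the proof of Theorem \ref{Th323}, but keeping the cyclic structure in the $i$-direction intact so that each reduction lands on a smaller cycle module $S_{n,m-3}/I(\mathcal{C}_{n,m-3})$ rather than on a path module. The three base cases $m=1,2,3$ are already available: $\mathcal{C}_{n,1}\cong C_n$ gives $\depth=\lceil\frac{n-1}{3}\rceil$ by Remark \ref{case1}, the case $m=2$ gives $\lceil\frac{n-1}{3}\rceil$ by Theorem \ref{The7}, and $m=3$ gives $\depth\leq\lceil\frac{n}{3}\rceil$ by Theorem \ref{The4}. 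These agree with the asserted right-hand side evaluated at $m=1,2,3$, so the dichotomy between $m\equiv 0$ and $m\equiv 1,2\,(\mod 3)$ enters only through which base case the induction terminates at; the inductive step itself is uniform in $m$.

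For the inductive step ($m\geq 4$) I would fix a monomial $v=\prod_{i\in A}x_{i(m-1)}$ supported on the single layer $m-1$, where $A\subseteq\{1,\dots,n\}$ is an independent dominating set of the cycle $C_n$ of size $\lceil\frac{n}{3}\rceil$. Concretely one can take $A=\{2,5,\dots,n-1\}$ when $n\equiv 0$, $A=\{2,5,\dots,n-3,n\}$ when $n\equiv 2$, and $A=\{1,4,\dots,n-3,n-1\}$ when $n\equiv 1\,(\mod 3)$. The only genuine deviation from the path choices of Theorem \ref{Th323} is in the case $n\equiv 1$, where the path set $\{1,4,\dots,n\}$ must be altered so that it no longer contains the wrap-around edge $\{x_{1(m-1)},x_{n(m-1)}\}$; one checks directly that each displayed set is both independent and dominating in $C_n$ and has cardinality $\lceil\frac{n}{3}\rceil$. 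Since $A$ is independent, $v\notin I(\mathcal{C}_{n,m})$, so \cite[Corollary 1.3]{AR1} gives $\depth(S_{n,m}/I(\mathcal{C}_{n,m}))\leq\depth(S_{n,m}/(I(\mathcal{C}_{n,m}):v))$.

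The heart of the argument is the identification
\[
S_{n,m}/(I(\mathcal{C}_{n,m}):v)\cong\bigl(S_{n,m-3}/I(\mathcal{C}_{n,m-3})\bigr)[\supp(v)].
\]
I would establish this by computing the colon generator by generator. Because $A$ dominates $C_n$, every vertex of the two neighbouring layers $m-2$ and $m$ is adjacent (through a vertical or a diagonal edge) to some $x_{i(m-1)}$ with $i\in A$; hence all of layers $m-2$ and $m$, together with the non-$A$ vertices of layer $m-1$, are forced into $(I(\mathcal{C}_{n,m}):v)$. Modding these out decouples layers $1,\dots,m-3$ from the top and leaves precisely the cyclic edge ideal $I(\mathcal{C}_{n,m-3})$ on those layers, while the $\lceil\frac{n}{3}\rceil$ variables of $\supp(v)$ survive as free polynomial variables because $A$ is independent. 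Lemma \ref{111} then yields $\depth(S_{n,m}/(I(\mathcal{C}_{n,m}):v))=\depth(S_{n,m-3}/I(\mathcal{C}_{n,m-3}))+\lceil\frac{n}{3}\rceil$, and feeding this into the induction telescopes to the claimed bound: from the $m=3$ base one accumulates $\lceil\frac{m}{3}\rceil$ copies of $\lceil\frac{n}{3}\rceil$ when $m\equiv 0$, and from the $m=1$ or $m=2$ base one accumulates $\lceil\frac{n-1}{3}\rceil$ plus $(\lceil\frac{m}{3}\rceil-1)$ copies of $\lceil\frac{n}{3}\rceil$ when $m\equiv 1,2\,(\mod 3)$.

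I expect the main obstacle to be the verification of the colon isomorphism, specifically confirming that the domination property of $A$ forces \emph{all} of layers $m-2$ and $m$ into the ideal with no stray surviving generators linking layer $m-3$ to the top; this is exactly where the explicit, residue-dependent choice of $A$ — and the extra care at $n\equiv 1\,(\mod 3)$ needed to avoid the wrap-around edge — must be carried out honestly rather than waved through. Once that isomorphism is secured, the remaining bookkeeping is routine, and the identical reduction with \cite[Proposition 2.7]{MC} in place of \cite[Corollary 1.3]{AR1} would give the parallel statement for Stanley depth.
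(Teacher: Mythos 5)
Your proposal is correct and follows essentially the same route as the paper: induction on $m$ with the same base cases, a colon by a squarefree monomial supported on an independent dominating set of $C_n$ of size $\lceil\frac{n}{3}\rceil$ in layer $m-1$ (your sets coincide with the paper's for $n\equiv 1,2\,(\mathrm{mod}\ 3)$ and are a harmless rotation for $n\equiv 0$), the bound $\depth(S_{n,m}/I(\mathcal{C}_{n,m}))\leq\depth(S_{n,m}/(I(\mathcal{C}_{n,m}):v))$ from \cite[Corollary 1.3]{AR1}, and the isomorphism $S_{n,m}/(I(\mathcal{C}_{n,m}):v)\cong(S_{n,m-3}/I(\mathcal{C}_{n,m-3}))[\supp(v)]$ combined with Lemma \ref{111}. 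The only difference is that you spell out the domination/independence verification of the colon isomorphism, which the paper asserts as clear.
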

\begin{proof}
We prove this result by induction on $m$. If $m=1$, then $I(\mathcal{C}_{n,1})=I({C}_{n})$, by \cite[Proposition 1.3]{MC4} we have the required result. For $m=2,3$ the result follows by Theorems \ref{The7} and \ref{The4}, respectively. Let $m\geq4$,
$$u:=\left\{
  \begin{array}{ll}
    x_{3(m-1)}x_{6(m-1)}\dots x_{(n-3)(m-1)}x_{n(m-1)}, & \hbox{if $n\equiv 0(\mod 3)$;} \\
    x_{1(m-1)}x_{4(m-1)}\dots x_{(n-6)(m-1)}x_{(n-3)(m-1)}x_{(n-1)(m-1)}, & \hbox{if $n\equiv 1(\mod 3)$;} \\
    x_{2(m-1)}x_{5(m-1)}\dots x_{(n-3)(m-1)}x_{n(m-1)}, & \hbox{if $n\equiv 2(\mod 3)$.}
  \end{array}
\right.$$
Clearly $u\notin I(\mathcal{C}_{n,m})$ and
$S_{n,m}/(I(\mathcal{C}_{{n,m}}):u)\cong (S_{n,m-3}/I(\mathcal{C}_{n,m-3}))[\supp(u)],$  since in all the cases $|\supp(u)|=\lceil\frac{n}{3}\rceil$, if $m\equiv 1,2(\mod 3)$ so by induction and Lemma \ref{111}
$$\depth(S_{n,m}/(I(\mathcal{C}_{{n,m}}):u))\leq\lceil\frac{n-1}{3}\rceil+(\lceil\frac{m-3}{3}\rceil-1)\lceil\frac{n}{3}\rceil+\lceil\frac{n}{3}\rceil=
\lceil\frac{n-1}{3}\rceil+(\lceil\frac{m}{3}\rceil-1)\lceil\frac{n}{3}\rceil.$$
Otherwise, by induction and Lemma \ref{111} we have
$$\depth(S_{n,m}/(I(\mathcal{C}_{{n,m}}):u))\leq\lceil\frac{n}{3}\rceil\lceil\frac{m-3}{3}\rceil+\lceil\frac{n}{3}\rceil=\lceil\frac{n}{3}
\rceil\lceil\frac{m}{3}\rceil.$$
\end{proof}
\begin{Theorem}\label{Th555}
Let $n\geq 3$ and $m\geq 1$, then
$\sdepth(S_{n,m}/I(\mathcal{C}_{n,m}))\leq \lceil\frac{n}{3}\rceil\lceil\frac{m}{3}\rceil.$
\end{Theorem}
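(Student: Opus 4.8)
The plan is to prove the bound by induction on $m$ in steps of three, running exactly parallel to the proof of Theorem \ref{Th325} but replacing each depth tool by its Stanley depth counterpart. Concretely, I would swap \cite[Corollary 1.3]{AR1} (which bounds $\depth$ from above through a colon ideal) for \cite[Proposition 2.7]{MC} (the analogous inequality $\sdepth(S/I)\le\sdepth(S/(I:u))$ for a monomial $u\notin I$), and swap the additivity of depth under adjoining variables for the additivity of Stanley depth; both additivity statements are contained in Lemma \ref{111}.

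First I would settle the base cases $m=1,2,3$. For $m=1$ we have $I(\mathcal{C}_{n,1})=I(C_n)$, and Remark \ref{case1} gives $\sdepth(S_{n,1}/I(\mathcal{C}_{n,1}))\le\lceil\frac n3\rceil=\lceil\frac n3\rceil\lceil\frac13\rceil$. For $m=2$, Corollary \ref{cor6} yields $\sdepth(S_{n,2}/I(\mathcal{C}_{n,2}))\le\lceil\frac n3\rceil=\lceil\frac n3\rceil\lceil\frac23\rceil$. For $m=3$, Theorem \ref{The4} gives $\sdepth(S_{n,3}/I(\mathcal{C}_{n,3}))\le\lceil\frac n3\rceil=\lceil\frac n3\rceil\lceil\frac33\rceil$. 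Thus the claimed bound holds in all three base cases, and since the induction decreases $m$ by $3$, these are exactly the values reached.

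For the inductive step I would take $m\ge 4$, assume the bound for $m-3$, and reuse verbatim the monomial $u$ built in the proof of Theorem \ref{Th325}, which depends only on the residue of $n$ modulo $3$. That proof already supplies the two facts I need, namely $u\notin I(\mathcal{C}_{n,m})$ together with the isomorphism $S_{n,m}/(I(\mathcal{C}_{n,m}):u)\cong (S_{n,m-3}/I(\mathcal{C}_{n,m-3}))[\supp(u)]$ with $|\supp(u)|=\lceil\frac n3\rceil$. Then \cite[Proposition 2.7]{MC}, Lemma \ref{111}, and the induction hypothesis combine to give
\begin{align*}
\sdepth\big(S_{n,m}/I(\mathcal{C}_{n,m})\big)
&\le \sdepth\big(S_{n,m}/(I(\mathcal{C}_{n,m}):u)\big)\\
&= \sdepth\big(S_{n,m-3}/I(\mathcal{C}_{n,m-3})\big)+\Big\lceil\tfrac n3\Big\rceil\\
&\le \Big\lceil\tfrac n3\Big\rceil\Big\lceil\tfrac{m-3}3\Big\rceil+\Big\lceil\tfrac n3\Big\rceil
= \Big\lceil\tfrac n3\Big\rceil\Big\lceil\tfrac m3\Big\rceil,
\end{align*}
where the last equality uses $\lceil\frac{m-3}3\rceil=\lceil\frac m3\rceil-1$. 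This closes the induction.

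The argument carries no genuine obstacle beyond the bookkeeping, but the one point worth flagging is why only the single uniform bound $\lceil\frac n3\rceil\lceil\frac m3\rceil$ appears here, whereas Theorem \ref{Th325} records a sharper two-case bound for $\depth$. The reason is that for the base modules the Stanley depth is known only as an upper bound, and in fact only as an interval in the cases $n\equiv1\ (\mod 3)$, so the finer estimate $\lceil\frac{n-1}3\rceil$ that drives the sharper depth bound is simply not available to feed into the induction. Consequently the coarser but uniform bound is the best that this same inductive scheme can deliver for $\sdepth$.
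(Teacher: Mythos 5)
Your proposal is correct and follows essentially the same route as the paper: the paper's proof of Theorem \ref{Th555} is precisely the induction of Theorem \ref{Th325} rerun with Corollary \ref{cor6} and Theorem \ref{The4} supplying the base cases and with \cite[Proposition 2.7]{MC} and Lemma \ref{111} replacing the depth tools, which is exactly what you spell out. Your closing observation about why only the uniform bound $\lceil\frac{n}{3}\rceil\lceil\frac{m}{3}\rceil$ survives for Stanley depth is also consistent with the paper's statements.
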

\begin{proof}
The proof is similar to the proof of Theorem \ref{Th325} by using Corollary \ref{cor6} and Theorem \ref{The4} instead of Theorems \ref{The7} and \ref{The4}.
\end{proof}
\begin{Remark}
{\em The upper bounds for sdepth of $S_{n,m}/I(\mathcal{P}_{n,m})$ and $S_{n,m}/I(\mathcal{C}_{n,m})$ as proved in Theorems \ref{Th323} and \ref{Th555} are too sharp. On the bases of our observations we formulate the following open question. A positive answer to this question will prove the Conjecture \ref{C1}.}
\end{Remark}
\begin{Question}
Is $\sdepth(I(\mathcal{P}_{n,m})),\sdepth(I(\mathcal{C}_{n,m}))\geq \lceil\frac{n}{3}\rceil\lceil\frac{m}{3}\rceil?$
\end{Question}

\end{document}